\newtheorem{theorem}{Theorem}[section]
\newtheorem{lemma}[theorem]{Lemma}
\newtheorem{proposition}[theorem]{Proposition}
\newtheorem{corollary}[theorem]{Corollary}
\theoremstyle{definition}
\newtheorem*{remark}{Remark}
\title
[The $p$-Weil--Petersson Teichm\"uller space]
{The $p$-Weil--Petersson Teichm\"uller space \\and the quasiconformal extension of curves}
\author[H. Wei]{Huaying Wei} 
\address{Department of Mathematics and Statistics, Jiangsu Normal University \endgraf Xuzhou 221116, PR China} 
\curraddr{Department of Mathematics, School of Education, Waseda University \endgraf
Shinjuku, Tokyo 169-8050, Japan}
\email{hywei@jsnu.edu.cn} 
\author[K. Matsuzaki]{Katsuhiko Matsuzaki}
\address{Department of Mathematics, School of Education, Waseda University \endgraf
Shinjuku, Tokyo 169-8050, Japan}
\email{matsuzak@waseda.jp}
\subjclass[2020]{Primary 32G15, 30C62, 30H25, 30H35; Secondary 42A45, 26A46, 46G20}
\keywords{Weil--Petersson Teichm\"uller space, Beurling--Ahlfors extension, integrable Beltrami coefficients,
global section of Teichm\"uller projection, $A_\infty$-weights, BMO functions, Besov space}
\thanks{Research supported by 
Japan Society for the Promotion of Science (KAKENHI 18H01125 and 21F20027).}
\begin{document}

\maketitle

\begin{abstract}
We consider the correspondence between the space of $p$-Weil--Petersson curves $\gamma$ on the plane
and the $p$-Besov space of $u=\log \gamma'$ on the real line for $p>1$. We prove that the variant of the
Beurling--Ahlfors extension defined by using the heat kernel yields a holomorphic map for $u$ on a domain of
the $p$-Besov space to the space of $p$-integrable Beltrami coefficients. 
This in particular gives a global real-analytic section for the Teichm\"uller projection from the 
space of $p$-integrable Beltrami coefficients to the $p$-Weil--Petersson Teichm\"uller space.
\end{abstract}

\section{Introduction}
\subsection{Background on the Weil--Petersson class and its generalization}
An increasing homeomorphism $h$ of the real line $\mathbb R$ onto itself belongs to the {\it $2$-Weil--Petersson class} on $\mathbb R$ (nowadays this is usually called the Weil--Petersson class in the literature, but we add the index $2$ here 
for its generalization)  
if, by definition, it can be extended to a quasiconformal homeomorphism of the upper half-plane $\mathbb U$ 
onto itself whose Beltrami coefficient is $2$-integrable 
in the hyperbolic metric (the index $2$ actually comes from here). 
Let $W_2(\mathbb R)$ be the set of all normalized $2$-Weil--Petersson class homeomorphisms on $\mathbb R$ which keeps $0$, $1$ and $\infty$ fixed. This is the real model of the {\it $2$-Weil--Petersson Teichm\"uller space}. 

A study of the $2$-Weil--Petersson Teichm\"uller space was initiated by Cui \cite{Cu} where he gave some characterizations of the $2$-Weil--Petersson class and showed that this is the completion of the set of all normalized $C^{\infty}$-diffeomorphisms under the Weil--Petersson metric. Later, Takhtajan and Teo \cite{TT} studied systematically the $2$-Weil--Petersson Teich\-m\"ul\-ler space. They proved that it is the connected component of the identity in the universal Teichm\"uller space viewed as a complex Hilbert manifold and established many other equivalent characterizations of the $2$-Weil--Petersson Teichm\"uller space. Moreover, they proposed a problem for characterizing intrinsically the elements in the $2$-Weil--Petersson class without using quasiconformal extension. Then, Shen and his coauthors \cite{Sh18, ST, SW} did among other work solve this problem by characterizing the $2$-Weil--Petersson class directly in terms of the fractional dimensional Sobolev space $H_{\mathbb R}^{1/2}$ of real-valued functions. Recently, Bishop \cite{Bi} gave lots of new characterizations of the $2$-Weil--Petersson class which link to various concepts in geometric measure theory and hyperbolic geometry. In addition, this work has motivations from string theory and SLE theory, and in reverse,
it has applications to these theories (see \cite{Bi, Wa} and references therein).

For $p > 1$, the normalized {\it $p$-Weil--Petersson class} $W_p(\mathbb R)$ can be defined similarly by just changing $2$-integrability into $p$-integrability. The generalization apart from the case of $p = 2$ is natural, and several works have been done in this direction (see \cite{Gu, Mat, Mat1, TS, Ya}). These generalizations are usually straightforward, but there are really a few crucial differences in the arguments between the cases of $p=2$ and $p \neq 2$. 

In the present paper, we study the Weil--Petersson theory of the universal Teich\-m\"ul\-ler space, and mainly consider a $p$-Weil--Petersson curve from $\mathbb R$ into the whole plane $\mathbb C$, which is the generalization of
a $p$-Weil--Petersson class homeomorphism. 
Here, by saying a curve, 
we include its parametrization, which has more information than just the image of a curve. 
We will prove the existence of a canonical quasiconformal extension of a Weil--Petersson curve to $\mathbb C$ using
the variant of the Beurling--Ahlfors extension by the heat kernel introduced in Fefferman, Kenig and Pipher \cite{FKP}. 
Its detailed exposition is in our previous paper \cite{WM-2}.
Then, by the restriction of this quasiconformal extension operator to the $p$-Weil--Petersson class $W_p(\mathbb R)$, 
we can obtain novel results and also the reformation of the existing results on 
the $p$-Weil--Petersson Teichm\"uller space. It is worthwhile to mention that our new approach is natural for the investigation of absolutely continuous curves on $\mathbb R$ induced by
quasiconformal mappings of $\mathbb C$, and can be used for other problems.

\subsection{Parametrization of Weil--Petersson curves}
Taking the space ${\mathcal M}_p(\mathbb U)$ of the Beltrami coefficients
that are $p$-integrable with respect to the hyperbolic metric on $\mathbb U$,
the $p$-Weil--Petersson Teichm\"uller space $T_p(\mathbb U)$ is given by the 
Teichm\"uller projection $\pi:{\mathcal M}_p(\mathbb U) \to T_p(\mathbb U)$. It is known that $T_p(\mathbb U)$ has a unique complex Banach manifold structure via the Bers embedding through the Schwarzian derivative (or via the logarithmic derivative embedding) such that the Teichm\"uller projection $\pi$ is holomorphic with local holomorphic inverse for $p \geq 2$
(see \cite{TS, WM-4}). This can be considered on 
the lower half-plane $\mathbb L$ in the same way. 

{\it A $p$-Weil--Petersson curve} $\gamma:\mathbb R \to \mathbb C$ is the restriction of
a quasiconformal homeomorphism of $\mathbb C$ whose complex dilatation on $\mathbb U$ and $\mathbb L$
belongs to ${\mathcal M}_p(\mathbb U)$ and ${\mathcal M}_p(\mathbb L)$, respectively. 
We impose the normalization $\gamma (0) = 0$ and $\gamma (1) = 1$ and $\gamma (\infty) = \infty$ on 
every $p$-Weil--Petersson curve $\gamma$. Let ${\rm WPC}_p$ be the set of all normalized $p$-Weil--Petersson curves. 
Hence, the space ${\rm WPC}_p$ can be understood 
in the spirit of the Bers simultaneous uniformization so that 
${\rm WPC}_p$ is identified with the product of the $p$-Weil--Petersson Teichm\"uller spaces
$T_p(\mathbb U) \times T_p(\mathbb L)$,  which endows ${\rm WPC}_p$ with the product complex Banach manifold structure. 
In our recent paper, we have proved the following. 

\begin{theorem}[\cite{WM-4}]\label{thm1}
For any normalized $p$-Weil--Petersson curve $\gamma \in {\rm WPC}_p$, the logarithm of the derivative $\log \gamma'$ belongs to
the $p$-Besov space $B_p(\mathbb R)$. Moreover, this correspondence $L:{\rm WPC}_p \to B_p(\mathbb R)$
is a biholomorphic homeomorphism onto its image.
\end{theorem}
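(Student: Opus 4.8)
The plan is to prove the three assertions separately: that $L$ is well defined, i.e.\ that $\log\gamma'\in B_p(\mathbb R)$; that $L$ is holomorphic; and that $L$ is a biholomorphic homeomorphism onto its image. I expect the well-definedness to carry essentially all of the analytic weight, the other two parts being formal once the right estimate is in hand.

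For well-definedness I would reduce the parametrized curve to conformal data. Writing $f\colon\mathbb U\to\Omega_+$ and $g\colon\mathbb L\to\Omega_-$ for the Riemann maps onto the two complementary Jordan domains of the quasicircle $\gamma(\mathbb R)$, one has $\gamma=f\circ a$ on $\mathbb R$, where $a=f^{-1}\circ\gamma|_{\mathbb R}$ is a $p$-Weil--Petersson class homeomorphism of $\mathbb R$; hence $\log\gamma'=(\log f')\circ a+\log a'$. Thus it suffices to control $\log f'$ and $\log a'$ in $B_p(\mathbb R)$ and to check that the $B_p$-estimate is stable under the change of variable $a$. All of these reduce to a single master estimate: if a univalent map admits a quasiconformal extension whose Beltrami coefficient $\mu$ is $p$-integrable in the hyperbolic metric, then the boundary trace of its logarithmic derivative lies in $B_p(\mathbb R)$, with norm controlled by the hyperbolic $L^p$-norm of $\mu$. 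I would prove this through the integral representation of the (pre-)Schwarzian in terms of $\mu$ together with the identity $(\log f')'=f''/f'$, reducing the claim to $L^p$-boundedness of the associated Beurling/Cauchy-type singular integral operators on the relevant weighted spaces.

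This master estimate is the main obstacle, and it is exactly where the case $p\neq 2$ departs from the classical Weil--Petersson case $p=2$. For $p=2$ one exploits the Hilbert-space structure of $H^{1/2}$ (Fourier side, Dirichlet energy, orthogonality); for general $p$ no such device is available and genuine $L^p$ harmonic analysis is forced. I anticipate that the correct framework combines Calder\'on--Zygmund theory with Muckenhoupt/$A_\infty$-weight and BMO information for $\gamma'$ (consistent with the stated keywords): one first places $\gamma'$ in the $A_\infty$ class and $\log\gamma'$ in BMO, and then upgrades this to the quantitative $B_p(\mathbb R)$-bound that encodes the Weil--Petersson refinement.

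Granting the estimate, the remaining parts are formal. Holomorphy of $L$ follows by factoring through the Teichm\"uller projection: the assignment $(\mu,\nu)\mapsto F$ solving the Beltrami equation with dilatation $\mu$ on $\mathbb U$ and $\nu$ on $\mathbb L$ is holomorphic by Ahlfors--Bers, hence so is $(\mu,\nu)\mapsto\log\big(F'|_{\mathbb R}\big)$, and since this descends to ${\rm WPC}_p\cong T_p(\mathbb U)\times T_p(\mathbb L)$ and $\pi$ admits local holomorphic sections, $L$ itself is holomorphic. Injectivity is elementary: from $u=\log\gamma'$ one recovers $\gamma'=\exp u$ and then $\gamma(x)=\int_0^x\exp u\,dt$ by the normalization $\gamma(0)=0$. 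Finally, to obtain a biholomorphism onto the image I would show that the derivative $dL$ is a toplinear isomorphism onto a closed complemented subspace of $B_p(\mathbb R)$; this is precisely the infinitesimal form of the master estimate, applied to tangent Beltrami coefficients. The inverse function theorem on Banach manifolds then makes $L$ a local biholomorphism onto a submanifold, and the global injectivity promotes this to a biholomorphic homeomorphism onto its image, whose inverse is the reconstruction $u\mapsto\gamma$ followed by the identification of $\gamma$ with its Teichm\"uller data.
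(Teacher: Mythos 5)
A preliminary remark: the paper you were given does not prove Theorem \ref{thm1} at all; it imports it from the companion paper \cite{WM-4}, and its own contribution is the opposite direction, namely an explicit holomorphic inverse of $L$ on a neighborhood of $B_p^{\mathbb R}(\mathbb R)$ built from the heat-kernel Beurling--Ahlfors extension (Theorems \ref{thm5} and \ref{qcU}). So your proposal can only be compared with the strategy of \cite{WM-4} and of the papers it rests on (\cite{Cu, Sh18, TS, SW}). At the level of its skeleton, your plan is the recognized one: factor $\gamma = f\circ a$ through the Riemann map $f$ and the welding homeomorphism $a = f^{-1}\circ\gamma|_{\mathbb R} \in W_p(\mathbb R)$, control $\log f'$ and $\log a'$ in $B_p$, and obtain holomorphy from Ahlfors--Bers dependence plus local holomorphic sections of $\pi$.

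As a proof, however, the proposal has genuine gaps, concentrated exactly at the steps you declare routine. (i) Your ``master estimate'' is the entire analytic content of the theorem and is only named, not proved. The route you indicate (integral representation of the pre-Schwarzian in terms of $\mu$, then weighted estimates) is indeed the one used by Cui and Tang--Shen, but its two hard steps are missing: the weighted bound $\iint_{\mathbb U}|f''/f'|^p y^{p-2}\,dxdy \lesssim \Vert\mu\Vert_p^p$ for the representation operator, and the passage from this analytic Besov bound on $\mathbb U$ to boundary values in $B_p(\mathbb R)$. Note also that the $A_\infty$/BMO information you invoke (as in Sections 2--3 of this paper) is strictly weaker than a $B_p$ bound; there is no general ``upgrade'' from the former to the latter. (ii) Stability of $B_p$ under precomposition with $a$ is not a ``check.'' For $p=2$ it is the Hilbert-space quasi-invariance of $H^{1/2}$ under quasisymmetric maps; for $p\neq2$ the known mechanism realizes $B_p(\mathbb R)$ as the boundary trace of the weighted energy space of functions $U$ with $\iint_{\mathbb U}|\nabla U|^p y^{p-2}\,dxdy<\infty$ and pulls this energy back under a quasiconformal extension of $a$ that is bi-Lipschitz for the hyperbolic metric (compare Proposition \ref{biLipproperty}); the pointwise pullback inequality genuinely needs the bi-Lipschitz bound when $p \neq 2$, whereas for $p=2$ plain quasiconformality suffices. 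So this step requires a theorem and an extension operator, not a remark. (iii) The endgame is wrong as stated: a globally injective map whose differential at every point is an isomorphism onto a closed complemented subspace is an injective, locally biholomorphic immersion, but that does not make it a homeomorphism onto its image --- continuity of the inverse in the subspace topology can fail for injective immersions. One must produce a continuous inverse; this is precisely what the section $\widetilde\pi\circ\widetilde\Lambda$ of Theorem \ref{thm5} accomplishes near $B_p^{\mathbb R}(\mathbb R)$, and what \cite{WM-4} has to do globally. Relatedly, ``holomorphic by Ahlfors--Bers, hence so is $(\mu,\nu)\mapsto\log(F'|_{\mathbb R})$'' is too quick: Ahlfors--Bers gives pointwise holomorphic dependence, and promoting this to holomorphy into the Banach space $B_p(\mathbb R)$ requires local boundedness of the $B_p$ norm (the master estimate in uniform form) combined with a weak-holomorphy-plus-local-boundedness argument, exactly the scheme used in the proof of Theorem \ref{complex} here; moreover, the local holomorphic sections of $\pi$ that you invoke are documented in this paper only for $p\geq 2$.
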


In the present paper, we will derive the inverse of $L$ in Theorem \ref{thm1} by
constructing the quasiconformal mappings explicitly. 
Precisely,  we show that if $\log \gamma'$ is in some
neighborhood $U(B_p^{\mathbb R}(\mathbb R))$ of the real Banach subspace $B_p^{\mathbb R}(\mathbb R)$ 
consisting of all real-valued $p$-Besov functions, then
the variant of the Beurling--Ahlfors extension by the heat kernel of $\gamma$ to both $\mathbb U$ and $\mathbb L$ has complex dilatations
in $\mathcal M_p(\mathbb U)$ and $\mathcal M_p(\mathbb L)$. Moreover, this correspondence $\widetilde \Lambda$ is holomorphic.
Then, if we further take the composition with the product of the Teichm\"uller projections
$\widetilde \pi:\mathcal M_p(\mathbb U) \times \mathcal M_p(\mathbb L) \to T_p(\mathbb U) \times T_p(\mathbb L)$,
this gives the inverse of $L:{\rm WPC}_p \cong T_p(\mathbb U) \times T_p(\mathbb L) \to B_p(\mathbb R)$ on the neighborhood $U(B_p^{\mathbb R}(\mathbb R))$.

\begin{theorem}[see Theorem \ref{qcU}] \label{thm5}
There is a holomorphic map 
$$
\widetilde \Lambda:U(B_p^{\mathbb R}(\mathbb R)) \to {\mathcal M}_p(\mathbb U) \times {\mathcal M}_p(\mathbb L)
$$ 
defined on a neighborhood $U(B_p^{\mathbb R}(\mathbb R)) \subset B_p(\mathbb R)$ of 
$B_p^{\mathbb R}(\mathbb R)$
such that $L \circ \widetilde \pi \circ \widetilde \Lambda$ is the identity on $U(B_p^{\mathbb R}(\mathbb R))$.
\end{theorem}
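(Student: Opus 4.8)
The plan is to define $\widetilde\Lambda$ by explicitly reconstructing a curve from $u$, extending it to the two half-planes by the heat-kernel Beurling--Ahlfors operator, and reading off the resulting Beltrami coefficients; the two things to verify are then that these coefficients lie in $\mathcal M_p(\mathbb U)\times\mathcal M_p(\mathbb L)$ and depend holomorphically on $u$, and that $L\circ\widetilde\pi\circ\widetilde\Lambda$ is the identity. For the \emph{reconstruction}, given $u$ in a neighborhood of $B_p^{\mathbb R}(\mathbb R)$ I set $\gamma'=e^{u}$ and define $\gamma(x)=\left(\int_0^1 e^{u}\right)^{-1}\int_0^x e^{u(s)}\,ds$, so that $\gamma(0)=0$, $\gamma(1)=1$, $\gamma(\infty)=\infty$. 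Since $u\mapsto e^u$ is entire and $u$ is close to the real subspace, $e^u$ stays in a thin sector about $\mathbb R_{>0}$, so $\gamma$ is a small perturbation of a genuine increasing homeomorphism and is an admissible curve. The normalizing prefactor only shifts $u$ by an additive constant, which is invisible in the homogeneous $p$-Besov norm, so no information is lost for the section identity.

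\emph{Extension and the coefficient.} Applying the heat-kernel variant of the Beurling--Ahlfors extension of \cite{FKP}, \cite{WM-2} to $\gamma$ produces $F:\mathbb U\to\mathbb C$, and symmetrically on $\mathbb L$. This extension is an integral operator that is linear in $\gamma$, so its Wirtinger derivatives $F_z=\tfrac12(F_x-iF_y)$ and $F_{\bar z}=\tfrac12(F_x+iF_y)$ are convolutions of $\gamma'=e^u$ against explicit Gaussian-type kernels dilated by the height $t=\operatorname{Im}z$. I then put $\widetilde\Lambda(u):=\mu=F_{\bar z}/F_z$ on each half-plane. At the base point $u\equiv0$ one has $\gamma=\mathrm{id}$, $F(z)=z$, hence $F_z\equiv1$ and $\mu\equiv0$; by continuity $F_z$ stays away from $0$ and $\|\mu\|_\infty<1$ throughout the neighborhood, so $F$ is quasiconformal, quoting the quasiconformality estimates of \cite{WM-2} for the real case and perturbing.

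\emph{The $p$-integrability estimate (main obstacle).} The heart of the argument is to show
\[
\iint_{\mathbb U}|\mu(z)|^p\,\frac{dx\,dt}{t^2}<\infty,
\]
with the bound locally uniform in $u$. Since $\mu$ vanishes to first order at $u\equiv0$, it is controlled by the oscillation of $e^u$, and I expect to dominate $|\mu|^p t^{-2}$ by $t^{\,p-2}\,|\partial(\text{heat extension of }u)|^p$, i.e.\ exactly the integrand in the Littlewood--Paley/heat-extension characterization of $\|u\|_{B_p(\mathbb R)}$. Carrying this out is the delicate point, both because of the nonlinearity $e^u$ and because one needs the estimate uniformly over a complex neighborhood rather than only for real $u$; here the near-real hypothesis, which keeps $e^u$ bounded above and below, is essential. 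The same local uniform bound gives local boundedness of $\widetilde\Lambda$, and since $u\mapsto e^u\mapsto\gamma\mapsto(F_z,F_{\bar z})\mapsto F_{\bar z}/F_z$ is G\^ateaux-holomorphic (each arrow is holomorphic, the last wherever $F_z\neq0$), the standard criterion then yields that $\widetilde\Lambda$ is holomorphic into $\mathcal M_p(\mathbb U)\times\mathcal M_p(\mathbb L)$.

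\emph{Section identity.} Finally, the $\mathbb U$- and $\mathbb L$-extensions share the boundary value $\gamma$ on $\mathbb R$, so together they form a quasiconformal homeomorphism of $\mathbb C$ with complex dilatation $\mu$ and boundary correspondence $\gamma$; by uniqueness of the normalized solution of the Beltrami equation this is the normal solution $F^{\mu}$. Hence $\widetilde\pi(\mu)\in T_p(\mathbb U)\times T_p(\mathbb L)$ is precisely the point represented by the curve $\gamma$, which under the identification $\mathrm{WPC}_p\cong T_p(\mathbb U)\times T_p(\mathbb L)$ corresponds to $\gamma$ itself, and applying $L$ returns $\log\gamma'=u$. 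Thus $L\circ\widetilde\pi\circ\widetilde\Lambda=\mathrm{id}$ on $U(B_p^{\mathbb R}(\mathbb R))$, as required.
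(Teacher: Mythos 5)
Your overall architecture matches the paper's (define $\gamma_u$ from $e^u$, extend by the heat-kernel Beurling--Ahlfors operator on both half-planes, set $\mu=F_{\bar z}/F_z$, get holomorphy from local boundedness plus G\^ateaux holomorphy, then read off the section identity), but the three places you treat as routine are exactly where the paper has to work, and your substitutes for them either rest on a false premise or are left unproven. The false premise: for $u$ in a $B_p$-neighborhood of $B_p^{\mathbb R}(\mathbb R)$ you claim ``$e^u$ stays in a thin sector about $\mathbb R_{>0}$'' and that the near-real hypothesis ``keeps $e^u$ bounded above and below.'' Closeness here is in the $B_p$ norm, and $B_p(\mathbb R)\subset{\rm VMO}(\mathbb R)$ contains unbounded functions, so neither ${\rm Re}\,u$ nor ${\rm Im}\,u$ is pointwise controlled; $e^u$ is neither in a sector nor bounded above or below. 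The paper replaces such pointwise control by integral control: Proposition \ref{BpA} (real Besov functions lie in the BMO-closure of $L^\infty$) feeds into Lemma \ref{nearreal} (uniform bounds on $\frac{1}{|I|}\int_I e^{|u-u_I|}$ near the real subspace), and the lower bound on the denominator $|\beta_y\ast e^{\tilde u-\tilde u_{I(x,y)}}|$ in Proposition \ref{bounded} is obtained by splitting $\tilde u=u+iv$, using Jensen's inequality for the real part and smallness of $\Vert v\Vert_{B_p}$ for the imaginary part. Your appeal to ``continuity'' to get $\Vert\mu\Vert_\infty<1$ on the whole neighborhood is also circular at that stage: in the paper, continuity of $\Lambda$ in the sup-norm is a \emph{consequence} of holomorphy, which itself needs the local boundedness first; and continuity from the single base point $u\equiv 0$ would in any case only give a neighborhood of $0$, not of all of $B_p^{\mathbb R}(\mathbb R)$. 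Likewise, the central $p$-integrability estimate---the paper's Lemma \ref{mainlemma}, proved by dominating $|\mu|^p$ via H\"older and John--Nirenberg by $\int|\alpha_y(x-t)||u(t)-u_{I(x,y)}|^p\,dt$ and then a dyadic decomposition with an explicit Fubini computation producing $\Vert u\Vert_{B_p}^p$---is exactly what you defer (``carrying this out is the delicate point''), so the heart of the theorem is missing from the proposal.

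The last gap is in the gluing step. From $\Vert\mu_u\Vert_\infty<1$ and smoothness of $F_u$ on each half-plane with common boundary value $\gamma_u$ you conclude that ``together they form a quasiconformal homeomorphism of $\mathbb C$.'' This does not follow: a local diffeomorphism with bounded dilatation need not be globally injective (e.g.\ $z\mapsto e^z$ on a strip of height $>2\pi$ is a conformal local homeomorphism that is not injective), so injectivity and surjectivity of $F_u$ must be proved, not read off. This is what the paper's Theorem \ref{qcU} does: approximate $u$ in $B_p(\mathbb R)$ by smooth compactly supported $u_j$, show each $F_{u_j}$ is a global homeomorphism by the properness/degree argument of Theorem \ref{smallcase}, and pass to the limit using compactness of quasiconformal maps with uniformly bounded dilatation together with $e^{u_j}\to e^u$ locally in $L^1$. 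Only after $F_u$ is known to be a quasiconformal homeomorphism of $\mathbb C$ with boundary values $\gamma_u$ can one invoke uniqueness for the Beltrami equation, as you do, and conclude $L\circ\widetilde\pi\circ\widetilde\Lambda={\rm id}$.
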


The point of this consequence is that a single formula of the the variant of the Beurling--Ahlfors extension by the heat kernel
can be applied to all complex-valued functions in some neighborhood of the real-valued $p$-Besov functions.
Other quasiconformal extensions used in the literature are not known to have this property.

\subsection{Applications to the Weil--Petersson class}
We apply Theorem \ref{thm5} restricted to $B_p^{\mathbb R}(\mathbb R)$ itself in
$U(B_p^{\mathbb R}(\mathbb R))$
(or restricted to $W_p(\mathbb R)$ in ${\rm WPC}_p$).  
This produces new assertions on the $p$-Weil--Petersson Teichm\"uller space.

The first implication of Theorem \ref{thm5} is the following, which is
just a special case of this theorem. 
\begin{corollary}\label{specialcase}
The holomorphic map $\widetilde \Lambda$ 
sends $u \in B_p^{\mathbb R}(\mathbb R)$ to a symmetric pair of Beltrami coefficients
$\left( \mu_u(z),\, \overline{\mu_u(\bar z)}\right) \in {\mathcal M}_p(\mathbb U) \times {\mathcal M}_p(\mathbb L)$,
and hence the normalized curve 
$\gamma_u(x) = \left(\int_0^1 e^{u(t)}dt \right)^{-1}\int_0^x e^{u(t)} dt$
belongs to the $p$-Weil--Petersson class $W_p(\mathbb R)$. 
Moreover, the correspondence $B_p^{\mathbb R}(\mathbb R) \to W_p(\mathbb R)$ given by $u \mapsto \gamma_u$
and its inverse are real-analytic homeomorphisms.
\end{corollary}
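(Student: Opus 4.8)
The plan is to read off Corollary \ref{specialcase} from Theorem \ref{thm5} by restricting $\widetilde\Lambda$ to the real subspace $B_p^{\mathbb R}(\mathbb R) \subset U(B_p^{\mathbb R}(\mathbb R))$, the only genuinely new ingredient being a conjugation symmetry forced by $u$ being real-valued. First, for real $u$ we have $\gamma_u'(x) = e^{u(x)}/\int_0^1 e^{u(t)}\,dt > 0$, so $\gamma_u$ is a strictly increasing self-homeomorphism of $\mathbb R$ fixing $0$, $1$, $\infty$. The key point is that the heat-kernel variant of the Beurling--Ahlfors extension associated with the real boundary data $\gamma_u$ commutes with complex conjugation $\sigma(z) = \bar z$: since the heat kernel is real and even and $\gamma_u(\mathbb R) \subset \mathbb R$, the resulting quasiconformal map $F$ of $\mathbb C$ satisfies $F(\bar z) = \overline{F(z)}$. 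A direct computation shows that for any map $H$ on $\mathbb U$ the reflected map $z \mapsto \overline{H(\bar z)}$ has Beltrami coefficient $\overline{\mu_H(\bar z)}$; applying this to $H = F|_{\mathbb U}$ and writing $\mu_u = \mu_{F}|_{\mathbb U}$, the relation $F(z) = \overline{F(\bar z)}$ on $\mathbb L$ gives $\mu_{F}|_{\mathbb L}(z) = \overline{\mu_u(\bar z)}$. Thus $\widetilde\Lambda(u) = \big(\mu_u(z),\, \overline{\mu_u(\bar z)}\big)$ is exactly the asserted symmetric pair in $\mathcal M_p(\mathbb U)\times\mathcal M_p(\mathbb L)$, and $F$ is a single quasiconformal self-map of $\mathbb C$ preserving $\mathbb R$ whose dilatation is $p$-integrable on each half-plane. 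Its boundary restriction is $\gamma_u$, whence $\gamma_u \in W_p(\mathbb R)$.

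For the second assertion I would identify, under $\mathrm{WPC}_p \cong T_p(\mathbb U)\times T_p(\mathbb L)$, the correspondence $u \mapsto \gamma_u$ with the restriction of $\widetilde\pi\circ\widetilde\Lambda$ to $B_p^{\mathbb R}(\mathbb R)$, which by the first part lands in the real locus $W_p(\mathbb R)$. Since $\widetilde\Lambda$ is holomorphic by Theorem \ref{thm5}, its restriction to the real Banach subspace $B_p^{\mathbb R}(\mathbb R)$ is real-analytic, and composing with the holomorphic projection $\widetilde\pi$ keeps the map $u\mapsto\gamma_u$ real-analytic. For the inverse I would use $L|_{W_p(\mathbb R)}\colon \gamma\mapsto \log\gamma'$; because $\gamma'>0$ for $\gamma\in W_p(\mathbb R)$, this map takes values in $B_p^{\mathbb R}(\mathbb R)$, and it is real-analytic as the restriction of the biholomorphism $L$ of Theorem \ref{thm1}. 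The two maps are mutually inverse: $L\circ\widetilde\pi\circ\widetilde\Lambda = \mathrm{id}$ on $B_p^{\mathbb R}(\mathbb R)$ is given by Theorem \ref{thm5} (note that the normalizing constant $\log\int_0^1 e^{u(t)}dt$ distinguishing $\log\gamma_u'$ from $u$ is invisible in $B_p(\mathbb R)$), while injectivity of $L$ from Theorem \ref{thm1} yields the reverse composition. Being real-analytic in both directions, $u\mapsto\gamma_u$ is a real-analytic homeomorphism onto $W_p(\mathbb R)$.

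The step I expect to be the main obstacle is the conjugation symmetry in the first paragraph, i.e.\ verifying that $\widetilde\Lambda$ actually outputs the symmetric pair on $B_p^{\mathbb R}(\mathbb R)$; this is not formal and requires the explicit heat-kernel extension formula developed in \cite{WM-2, FKP}, after which the identity $F(\bar z)=\overline{F(z)}$ and the Beltrami computation follow. Everything else is the formal mechanism of restricting the holomorphic maps $\widetilde\Lambda$, $\widetilde\pi$, and $L$ to real subspaces, together with the surjectivity $L(W_p(\mathbb R)) = B_p^{\mathbb R}(\mathbb R)$ that the section $\widetilde\pi\circ\widetilde\Lambda$ supplies for free.
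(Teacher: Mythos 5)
Your proposal is correct and follows essentially the same route as the paper, which derives this corollary directly as the restriction of Theorem \ref{thm5} (Theorem \ref{qcU}) to $B_p^{\mathbb R}(\mathbb R)$ without a separate written proof. The details you supply --- the reflection symmetry $F(\bar z)=\overline{F(z)}$ for real $u$ (which is immediate from the paper's definition \eqref{lowerhalfplane} of the lower half-plane extension), the identification of $u\mapsto\gamma_u$ with $\widetilde\pi\circ\widetilde\Lambda|_{B_p^{\mathbb R}(\mathbb R)}$, and the real-analyticity of both directions via restriction of the holomorphic maps $\widetilde\Lambda$, $\widetilde\pi$, $L$ together with the identity $L\circ\widetilde\pi\circ\widetilde\Lambda=\mathrm{id}$ and injectivity of $L$ --- are precisely what the paper's argument leaves implicit.
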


The statements that $\gamma_u$ belongs to $W_2(\mathbb R)$ if $u \in B_2^{\mathbb R}(\mathbb R)$ 
and that $B_2^{\mathbb R}(\mathbb R) \to W_2(\mathbb R)$ (and its inverse) is real-analytic were proved in Shen and Tang
\cite{ST} by using a modified Beurling--Ahlfors extension due to Semmes \cite{Se}
which can work only for such $u$ with small norm. In this case,
the quasiconformal extensions after decomposing $\gamma_u$ 
into small norm pieces and then the composition of such extensions are required. On the contrary,
our extension has a better property (no norm assumption on $u$ is needed) and can be applied for any $p>1$. 
Moreover, as the advantage of the one-time extension by a single formula,
it holds several desirable properties of its complex dilatation, and also it yields the next.

The second implication of Theorem \ref{thm5} is that the variant of the Beurling--Ahlfors extension by the heat kernel
yields a global real-analytic section $\Lambda \circ L$ for the $p$-Weil--Petersson Teichm\"uller space
$T_p \cong W_p(\mathbb R) \subset {\rm WPC}_p$, where $\Lambda$ is the diagonal reduction of $\widetilde \Lambda$.

\begin{corollary}\label{cor6}
Under the identification of $W_p(\mathbb R)$ with $T_p$,
the map 
$$
\Lambda \circ L|_{W_p(\mathbb R)}:T_p \to {\mathcal M}_p
$$
is a global real-analytic section for the Teichm\"uller projection $\pi:{\mathcal M}_p \to T_p$. 
\end{corollary}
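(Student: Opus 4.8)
The plan is to obtain Corollary~\ref{cor6} from Theorem~\ref{thm5} and Corollary~\ref{specialcase} by restricting the product construction to its symmetric locus and tracking three compatible identifications. First I would record the diagonal reduction precisely. The symmetric pairs $(\mu,\mu^*)$ with $\mu^*(z)=\overline{\mu(\bar z)}$ form a real-analytic submanifold of $\mathcal M_p(\mathbb U)\times\mathcal M_p(\mathbb L)$ identified with $\mathcal M_p=\mathcal M_p(\mathbb U)$ by the first projection $\mathrm{pr}_1$; the diagonal of $T_p(\mathbb U)\times T_p(\mathbb L)$ is identified with $T_p=T_p(\mathbb U)$ by $\pi$, and it corresponds under $L$ to $W_p(\mathbb R)\subset\mathrm{WPC}_p$. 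By Corollary~\ref{specialcase} the holomorphic map $\widetilde\Lambda$ carries $B_p^{\mathbb R}(\mathbb R)$ into the symmetric locus, so $\Lambda:=\mathrm{pr}_1\circ\widetilde\Lambda|_{B_p^{\mathbb R}(\mathbb R)}\colon B_p^{\mathbb R}(\mathbb R)\to\mathcal M_p$ is well defined, while $\widetilde\pi$ sends symmetric pairs to diagonal points. Under these identifications the product diagram of Theorem~\ref{thm5} descends to the single-space diagram $B_p^{\mathbb R}(\mathbb R)\xrightarrow{\ \Lambda\ }\mathcal M_p\xrightarrow{\ \pi\ }T_p$. This bookkeeping is exactly where I expect the main care to be needed, since the assertion concerns the single projection $\pi$ rather than the product $\widetilde\pi$, and one must verify that $\widetilde\Lambda$ genuinely lands in the symmetric locus on real $u$ and that $\widetilde\pi$ respects the diagonal.

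For the section property, fix $\gamma\in W_p(\mathbb R)$ identified with $\tau=\pi(\mu)\in T_p$, where $\mu$ is any $p$-integrable Beltrami coefficient on $\mathbb U$ inducing the boundary correspondence $\gamma$, and set $u=L(\gamma)=\log\gamma'\in B_p^{\mathbb R}(\mathbb R)$. Because $\gamma$ is normalized, $\int_0^1 e^{u}=\gamma(1)-\gamma(0)=1$, so the normalized curve $\gamma_u$ of Corollary~\ref{specialcase} equals $\gamma$; consequently the heat-kernel Beurling--Ahlfors extension underlying $\widetilde\Lambda(u)$ restricts on $\mathbb R$ to $\gamma$ itself, whence $\Lambda(u)=\mu_u$ has the same boundary correspondence as $\mu$ and $\pi(\Lambda(u))=\pi(\mu)=\tau$. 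Equivalently, restricting the identity $L\circ\widetilde\pi\circ\widetilde\Lambda=\mathrm{id}$ of Theorem~\ref{thm5} to $B_p^{\mathbb R}(\mathbb R)$ and invoking the injectivity of $L$ on $W_p(\mathbb R)$ from Theorem~\ref{thm1} gives $\pi\circ\Lambda\circ L|_{W_p(\mathbb R)}=\mathrm{id}_{T_p}$. Hence $\Lambda\circ L|_{W_p(\mathbb R)}$ is a section of $\pi$, and it is global: the map $L|_{W_p(\mathbb R)}\colon W_p(\mathbb R)\to B_p^{\mathbb R}(\mathbb R)$ is a bijection with inverse $u\mapsto\gamma_u$ by Corollary~\ref{specialcase}, and $\Lambda$ is defined on all of $B_p^{\mathbb R}(\mathbb R)$.

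It remains to check real-analyticity factor by factor. The map $L|_{W_p(\mathbb R)}$ is real-analytic, being the restriction of the biholomorphism $L$ of Theorem~\ref{thm1} to the totally real submanifold $W_p(\mathbb R)$ (equivalently, the inverse of the real-analytic map $u\mapsto\gamma_u$ of Corollary~\ref{specialcase}). The map $\Lambda$ is real-analytic, being the restriction of the holomorphic map $\widetilde\Lambda$ of Theorem~\ref{thm5} to the real Banach subspace $B_p^{\mathbb R}(\mathbb R)$ followed by the bounded linear projection $\mathrm{pr}_1$. Finally the identification $W_p(\mathbb R)\cong T_p$ is a real-analytic diffeomorphism of the real model of the $p$-Weil--Petersson Teichm\"uller space. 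Composing these shows that $\Lambda\circ L|_{W_p(\mathbb R)}\colon T_p\to\mathcal M_p$ is a global real-analytic section of $\pi$, as claimed.
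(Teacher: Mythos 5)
Your proposal is correct and takes essentially the same route as the paper: there, Corollary \ref{cor6} is obtained precisely by restricting Theorem \ref{thm5} (together with Corollary \ref{specialcase}) to $B_p^{\mathbb R}(\mathbb R)$, with $\Lambda$ the diagonal reduction of $\widetilde\Lambda$, so that $L\circ\widetilde\pi\circ\widetilde\Lambda=\mathrm{id}$ descends to $\pi\circ\Lambda\circ L|_{W_p(\mathbb R)}=\mathrm{id}_{T_p}$ and real-analyticity follows from restricting the holomorphic $\widetilde\Lambda$ to the real subspace. Your write-up simply makes explicit the identifications (symmetric locus, diagonal, normalization $\gamma_u=\gamma$) that the paper leaves implicit.
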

For the universal Teichm\"uller space $T$ and its subspaces invariant under Fuchsian groups,
the Douady--Earle extension in \cite{DE} gives a global real-analytic section for the Teichm\"uller projection. Here, the result of Corollary \ref{cor6} is a counterpart for $T_p$.
From Corollary \ref{cor6}, we also see that $T_p$ is contractible 
since ${\mathcal M}_p$ is contractible. 
A contraction $\phi:T_p\times [0, 1] \to T_p$ is given explicitly by
$\phi(h, t)= \pi\left((1 - t)\Lambda \circ L|_{W_p(\mathbb R)} (h)\right)$. 
The holomorphic contractibility of $T_2$, which means that the contraction $\phi(\cdot,t)$ is holomorphic 
for each fixed $t \in [0, 1]$,
was obtained by Fan and Hu \cite{FH} though this does not imply the existence of a global holomorphic section for $\pi$.

\subsection{Plan of this paper}
We end this introduction section with the organization of the paper. In Section 2, we recall definitions and properties of the BMO space, Muckenhoupt weights, and the Besov space, and prepare several basic results for later use. 
In Section 3, we give a detailed exposition on the variant of the Beurling-Ahlfors extension by the heat kernel 
for complex-valued BMO functions. This extension plays an important role in the proof of our main theorem (Theorem \ref{qcU}). 
Section 4 is devoted to this proof and its consequences as we described above. In Section 5 as an appendix, 
we show that our extension translated to the setting of the unit circle
also yields the desired quasiconformal extension to the unit disk.

\section{Preliminaries on BMO, ${\rm A}_{\infty}$-weights, and the Besov space}

A locally integrable complex-valued function $u$ on $\mathbb R$ is of {\it BMO} if
$$
\Vert u \Vert_*=\sup_{I \subset \mathbb R}\frac{1}{|I|} \int_I |u(x)-u_I| dx <\infty,
$$
where the supremum is taken over all bounded intervals $I$ on $\mathbb R$ and $u_I$ denotes the integral mean of $u$
over $I$. The set of all BMO functions on $\mathbb R$ is denoted by ${\rm BMO}(\mathbb R)$.
This is regarded as a Banach space with norm $\Vert \cdot \Vert_*$
by ignoring the difference of constant functions.
It is said that $u \in {\rm BMO}(\mathbb R)$ is of {\it VMO} if
$$ 
\lim_{|I| \to 0}\frac{1}{|I|} \int_I |u(x)-u_I| dx=0,
$$
and the set of all such functions is denoted by ${\rm VMO}(\mathbb R)$.
This is a closed subspace of ${\rm BMO}(\mathbb R)$.
The {\it John--Nirenberg inequality} for BMO functions (see \cite[VI.2]{Ga}, \cite[IV.1.3]{St2}) asserts that
there exists two universal positive constants $C_0$ and $C_{JN}$ such that for any  BMO function $u$, 
any bounded interval $I$ of $\mathbb{R}$, and any $\lambda > 0$, it holds that
\begin{equation}\label{JN}
\frac{1}{|I|} |\{t \in I: |u(t) - u_I| \geq \lambda \}| \leq C_0 \exp\left(\frac{-C_{JN}\lambda}{\Vert u \Vert_*} \right).
\end{equation}

A locally integrable non-negative measurable function $\omega \geq 0$ on $\mathbb R$ 
is called a {\it weight}. We say that $\omega$ is
an {\it $A_p$-weight} of Muckenhoupt \cite{M} for $p>1$ if there exists a constant $C_p(\omega) \geq 1$ such that
\begin{equation}\label{Ap}
\left(\frac{1}{|I|} \int_I \omega(x)dx \right)\left(\frac{1}{|I|} \int_{I} \left(\frac{1}{\omega(x)}\right)^{\frac{1}{p-1}}dx\right)^{p-1}
\leq C_p(\omega)
\end{equation}
for any bounded interval $I \subset \mathbb R$. We call the optimal value of such $C_p(\omega)$
the $A_p$-constant for $\omega$. We define $\omega$ to be an {\it $A_\infty$-weight} if $\omega$ is an
$A_p$-weight for some $p>1$, that is, $A_\infty=\bigcup_{p>1} A_p$.
It is known that $\omega$ is an $A_\infty$-weight if and only if
there are positive constants $\alpha(\omega)$, $K(\omega)>0$ such that  
\begin{equation}\label{SD}
\frac{\int_E \omega(x)dx}{\int_I \omega(x)dx}\leq K(\omega)\left(\frac{|E|}{|I|}\right)^{\alpha(\omega)}
\end{equation}
for any bounded interval $I \subset \mathbb{R}$ and 
for any measurable subset $E \subset I$ (see \cite[Theorem V]{CF}
and \cite[Lemma VI.6.11]{Ga}).

The Jensen inequality implies that
\begin{equation}\label{Jensen}
\exp \left(\frac{1}{|I|} \int_I \log \omega(x) dx \right) 
\leq \frac{1}{|I|} \int_I \omega(x) dx.
\end{equation}
Another characterization of $A_\infty$-weights can be given by 
the inverse Jensen inequality. 
Namely,
$\omega \geq 0$
belongs to the class of $A_\infty$-weights 
if and only if there exists a constant $C_\infty(\omega) \geq 1$ such that
\begin{equation}\label{iff}
\frac{1}{|I|} \int_I \omega(x) dx \leq C_\infty(\omega) \exp \left(\frac{1}{|I|} \int_I \log \omega(x) dx \right) 
\end{equation}
for every bounded interval $I \subset \mathbb R$ (see \cite{Hr}). 
We call the optimal value of such $C_\infty(\omega)$
the $A_\infty$-constant for $\omega$. 
If $\omega$ is an $A_p$-weight, then $C_\infty(\omega) \leq C_p(\omega)$ by the Jensen inequality.
If $\omega$ is an $A_\infty$-weight,
the constants $\alpha(\omega)$ and $K(\omega)$ in (\ref{SD}) are estimated by $C_\infty(\omega)$ 
as is shown in \cite[Theorem 1]{Hr},
and 
$C_p(\omega)$ and $p$ are estimated by $\alpha(\omega)$ and $K(\omega)$
(see \cite[Section 3]{CF}).
One can also refer to \cite[p.218]{St2} for these implications.

For a convenience of reference later, we verify  
inequality (\ref{iff}) showing the dependence of $C_\infty(\omega)$ on $\omega$
when $\Vert \log \omega \Vert_*$
is sufficiently small. In particular,
$\omega$ is an $A_\infty$-weight in this case.
Conversely, for any $A_\infty$-weight $\omega$, we have
$\log \omega \in \rm BMO(\mathbb R)$ (see \cite[Lemma VI.6.5]{Ga}).

\begin{proposition}\label{C_0}
Suppose that a weight $\omega \geq 0$ satisfies $\log \omega \in {\rm BMO}(\mathbb R)$.
If the BMO norm $\Vert \log \omega \Vert_*$ 
is less than the constant $C_{JN}$, then $\omega$ is in $A_2 \subset A_\infty$ and the $A_2$- and $A_\infty$-constants
depend only on $\Vert \log \omega \Vert_*$ and tend to $1$ as $\Vert \log \omega \Vert_* \to 0$.
\end{proposition}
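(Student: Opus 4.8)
The plan is to put $u=\log\omega\in{\rm BMO}(\mathbb R)$ and estimate the $A_2$-constant $C_2(\omega)$ directly, drawing on the exponential integrability of BMO functions encoded in the John--Nirenberg inequality (\ref{JN}). The first reduction is purely algebraic: since
$$
\left(\frac{1}{|I|}\int_I \omega\,dx\right)\left(\frac{1}{|I|}\int_I \omega^{-1}\,dx\right)
=\left(\frac{1}{|I|}\int_I e^{u-u_I}\,dx\right)\left(\frac{1}{|I|}\int_I e^{-(u-u_I)}\,dx\right),
$$
where the factors $e^{u_I}$ and $e^{-u_I}$ cancel, it suffices to bound the two mean values on the right, each of which is dominated by $\frac{1}{|I|}\int_I e^{|u-u_I|}\,dx$.

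Next I would express this last mean value through its distribution function. Writing $f=e^{|u-u_I|}\geq 1$, the layer-cake formula together with the substitution $t=e^{\lambda}$ gives
$$
\frac{1}{|I|}\int_I e^{|u-u_I|}\,dx
= 1+\int_0^\infty e^{\lambda}\,\frac{|\{x\in I:\,|u(x)-u_I|>\lambda\}|}{|I|}\,d\lambda.
$$
Inserting the John--Nirenberg bound $|\{x\in I:\,|u-u_I|>\lambda\}|/|I|\leq C_0\,e^{-C_{JN}\lambda/\Vert u\Vert_*}$ turns the integrand into $C_0\,e^{(1-C_{JN}/\Vert u\Vert_*)\lambda}$, whose integral over $[0,\infty)$ converges precisely when $\Vert u\Vert_*<C_{JN}$ — this is exactly where the hypothesis enters. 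Evaluating the elementary integral yields
$$
\frac{1}{|I|}\int_I e^{|u-u_I|}\,dx \leq 1+\frac{C_0\,\Vert u\Vert_*}{C_{JN}-\Vert u\Vert_*}.
$$
As this is independent of $I$, taking the supremum over intervals and multiplying the two factors gives
$$
C_2(\omega)\leq \left(1+\frac{C_0\,\Vert\log\omega\Vert_*}{C_{JN}-\Vert\log\omega\Vert_*}\right)^{2}.
$$
This shows $\omega\in A_2\subset A_\infty$, exhibits the bound as a function of $\Vert\log\omega\Vert_*$ alone, and since $C_2(\omega)\geq 1$ always holds, the squeeze forces $C_2(\omega)\to 1$ as $\Vert\log\omega\Vert_*\to 0$.

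Finally, the $A_\infty$-constant is obtained for free: the inequality $C_\infty(\omega)\leq C_2(\omega)$ recorded after (\ref{iff}), a consequence of the Jensen inequality (\ref{Jensen}), combined with the trivial lower bound $C_\infty(\omega)\geq 1$ from (\ref{iff}), gives $1\leq C_\infty(\omega)\leq C_2(\omega)$ and hence $C_\infty(\omega)\to 1$ as well. The only delicate point in the whole argument is the convergence of the exponential integral, which hinges on matching the growth rate $e^{\lambda}$ — forced by the exponent $\beta=1$ coming from the power $-1$ in the $A_2$ condition — against the John--Nirenberg decay rate $e^{-C_{JN}\lambda/\Vert u\Vert_*}$; the requirement $\Vert\log\omega\Vert_*<C_{JN}$ is precisely the threshold for this balance, and everything else is a routine estimate.
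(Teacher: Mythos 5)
Your proof is correct and follows essentially the same route as the paper: the same cancellation of $e^{\pm u_I}$ in the $A_2$ product, the same layer-cake plus John--Nirenberg computation giving the bound $1+C_0\Vert u\Vert_*/(C_{JN}-\Vert u\Vert_*)$ for $\frac{1}{|I|}\int_I e^{|u-u_I|}\,dx$, and the same squeeze to get both constants tending to $1$. The only cosmetic difference is that for the $A_\infty$-constant you invoke the previously recorded inequality $C_\infty(\omega)\leq C_2(\omega)$, whereas the paper re-derives inequality (\ref{iff}) directly from the Jensen inequality, which is the same argument.
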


\begin{proof}
Let $u=\log \omega \in {\rm BMO}(\mathbb R)$.
For any bounded interval $I \subset \mathbb R$, the John--Nirenberg inequality (\ref{JN}) yields that
\begin{equation}\label{basicJN}
\begin{split}
\frac{1}{|I|} \int_I e^{|u(x)-u_I|}dx
&=\int_0^\infty \frac{1}{|I|} |\{ x \in I: |u(x)-u_I|>\lambda\}| e^\lambda d\lambda+1 \\
& \leq C_0 \int_0^\infty \exp\left(\frac{-C_{JN}\lambda}{\Vert u \Vert_*}\right) e^\lambda d\lambda+1\\
&= \frac{C_0 \Vert u \Vert_*}{C_{JN}-\Vert u \Vert_*}+1
\end{split}
\end{equation}
when $\Vert u \Vert_*<C_{JN}$. 
We set the right side of the above inequality as $C(\omega)^{1/2} \geq 1$, which tends to $1$ 
as $\Vert u \Vert_* \to 0$. From this inequality,
we have
$$
\frac{1}{|I|} \int_I e^{u(x)-u_I}dx \leq C(\omega)^{1/2};\quad
\frac{1}{|I|} \int_I e^{u_I-u(x)}dx \leq C(\omega)^{1/2}.
$$
Hence, we see that $\omega$ is an $A_2$-weight by
\begin{equation*}
\begin{split}
\left(\frac{1}{|I|} \int_I \omega(x) dx \right)\left(\frac{1}{|I|} \int_I \frac{1}{\omega(x)}dx \right)&=
\left(\frac{1}{|I|} \int_I e^{u(x)}dx \right)\left(\frac{1}{|I|} \int_I e^{-u(x)}dx \right)\\
&=\left(\frac{1}{|I|} \int_I e^{u(x)-u_I}dx \right)\left(\frac{1}{|I|} \int_I e^{u_I-u(x)}dx \right) \leq C(\omega).
\end{split}
\end{equation*}
Moreover, the Jensen inequality (\ref{Jensen}) implies that
$$
\frac{1}{|I|} \int_I \frac{1}{\omega(x)}dx \geq \exp \left(\frac{-1}{|I|} \int_I \log \omega(x) dx \right), 
$$
which shows that inequality (\ref{iff}) is satisfied for the constant $C(\omega)$. 
\end{proof}

The {\it $p$-Besov space} $B_p(\mathbb R)$ for $p>1$ is
the set of all measurable complex-valued functions $u$ on $\mathbb R$ that satisfy
$$
\Vert u \Vert^p_{B_p}=\int_{-\infty}^{\infty}\!\int_{-\infty}^{\infty} \frac{|u(t)-u(s)|^p}{|t-s|^2} dsdt<\infty.
$$
For the case of $p = 2$, $B_2(\mathbb R)$ coincides with the Sobolev space $H^{1/2}(\mathbb R)$. 
It is easy to see that if $u \in B_p(\mathbb R)$ then $|u|, {\rm Re}\,u, {\rm Im}\,u \in B_p^{\mathbb R}(\mathbb R)$. Here, $B_p^{\mathbb R}(\mathbb R)$ denotes the set of all real-valued $p$-Besov functions. 
As in the case of BMO functions, $B_p(\mathbb R)$ can be regarded as a Banach space with norm $\Vert \cdot \Vert_{B_p}$
by modulo of constant functions. In other words, we regard $B_p(\mathbb R)$ as
a homogeneous Besov space, which is often denoted by $\dot B_p(\mathbb R)$ in the literature.

The following relation between $B_{p}(\mathbb R)$ and ${\rm VMO}(\mathbb R)$ is important
throughout this paper. We can also find this for $p=2$ in \cite[Section 3]{SW}.

\begin{proposition}\label{BMOnorm}
The inclusion $B_{p}(\mathbb R) \subset {\rm VMO}(\mathbb R)$ holds.
Moreover, $\Vert u \Vert_* \leq \Vert u \Vert_{B_p}$
for every $u \in B_{p}(\mathbb R)$, and in particular, the inclusion map is continuous.
\end{proposition}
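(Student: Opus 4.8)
The plan is to estimate the mean oscillation of $u$ over an arbitrary bounded interval $I$ directly by a localized version of the Besov energy, and then to extract both the norm inequality and the VMO property from this single estimate. First I would symmetrize the oscillation: since $u_I=\frac{1}{|I|}\int_I u(y)\,dy$, we have $u(x)-u_I=\frac{1}{|I|}\int_I (u(x)-u(y))\,dy$, so the triangle inequality gives
\begin{equation*}
\frac{1}{|I|}\int_I |u(x)-u_I|\,dx \leq \frac{1}{|I|^2}\int_I\int_I |u(x)-u(y)|\,dy\,dx.
\end{equation*}
Applying Jensen's inequality (equivalently, Hölder with exponent $p>1$) against the normalized product measure $|I|^{-2}\,dx\,dy$ on $I\times I$ then raises the integrand to the power $p$ at the cost of a $p$-th root.

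Next I would exploit the geometry of $I\times I$: for $(x,y)\in I\times I$ one has $|x-y|\leq |I|$, hence $|I|^{-2}\leq |x-y|^{-2}$. Substituting this lower bound for the kernel turns the averaged $L^p$-oscillation into the Besov integrand, so that
\begin{equation*}
\frac{1}{|I|}\int_I |u(x)-u_I|\,dx \leq \left(\int_I\int_I \frac{|u(x)-u(y)|^p}{|x-y|^2}\,dy\,dx\right)^{1/p} \leq \Vert u\Vert_{B_p}.
\end{equation*}
Taking the supremum over all bounded intervals $I$ yields $\Vert u\Vert_* \leq \Vert u\Vert_{B_p}$ and the continuity of the inclusion into ${\rm BMO}(\mathbb R)$.

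For the sharper assertion $B_p(\mathbb R)\subset {\rm VMO}(\mathbb R)$, the key observation is that the right-hand side of the displayed estimate is itself a localized quantity that I can control uniformly in the position of $I$. Indeed, if $|I|\leq \delta$ then $I\times I$ is contained in the strip $R_\delta=\{(x,y):|x-y|\leq\delta\}$, so for every such $I$,
\begin{equation*}
\frac{1}{|I|}\int_I |u(x)-u_I|\,dx \leq \left(\iint_{R_\delta} \frac{|u(x)-u(y)|^p}{|x-y|^2}\,dy\,dx\right)^{1/p}.
\end{equation*}
As $\delta\to 0$ the strips $R_\delta$ decrease to the diagonal, a null set in $\mathbb R^2$, so the truncated density $\mathbf 1_{R_\delta}\,|x-y|^{-2}|u(x)-u(y)|^p$ tends to $0$ almost everywhere while being dominated by the integrable Besov density. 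By dominated convergence the right-hand side, which no longer involves $I$, tends to $0$ as $\delta\to 0$, giving the required decay of the mean oscillation over all intervals with $|I|\leq\delta$; this is exactly the VMO condition.

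The only point demanding care is this last uniformity: the VMO condition requires the mean oscillation to vanish as $|I|\to 0$ regardless of where $I$ sits on $\mathbb R$, so a mere absolute-continuity argument on a fixed bounded set would be insufficient. Dominating $I\times I$ by the full strip $R_\delta$ and invoking dominated convergence against the global Besov density is what supplies this location-independent decay, and I expect this to be the main (though still mild) obstacle.
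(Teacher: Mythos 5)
Your proof is correct, and its first half---symmetrizing the oscillation $u(x)-u_I=\frac{1}{|I|}\int_I(u(x)-u(y))\,dy$, applying H\"older against the normalized measure on $I\times I$, and then replacing $|I|^{-2}$ by the pointwise lower bound $|x-y|^{-2}$---is exactly the paper's argument for $\Vert u \Vert_* \leq \Vert u \Vert_{B_p}$. The only divergence is in the VMO step. The paper invokes absolute continuity of the integral of the (integrable) Besov density: for every $\varepsilon>0$ there is $\delta>0$ such that \emph{any} measurable set of planar measure less than $\delta$, in particular $I\times I$ with $|I|$ small, carries integral less than $\varepsilon$. You instead dominate $I\times I$ by the strip $R_\delta=\{(x,y):|x-y|\leq\delta\}$ and let the strips shrink to the diagonal via dominated convergence. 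Both arguments are correct and both are uniform in the location of $I$; your closing worry that a mere absolute-continuity argument would lack location-independence is unfounded, since absolute continuity of the Lebesgue integral of an $L^1(\mathbb R^2)$ function is a statement about all measurable sets of small measure wherever they sit (approximate the density in $L^1$ by a bounded compactly supported function to see this). Your strip formulation simply makes that uniformity visually explicit, at no extra cost and with no essential difference in the underlying idea.
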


\begin{proof}
Let $I \subset \mathbb R$ be any bounded interval. Then,
\begin{align*}
\frac{1}{|I|}\int_I |u(t)-u_I|dt&=\frac{1}{|I|}\int_I \left|u(t)-\frac{1}{|I|}\int_I u(s)ds \right|dt\\
&\leq \frac{1}{|I|^2}\int_I \!\int_I|u(t)-u(s)|ds dt \\
&\leq \left(\frac{1}{|I|^2}\int_I \!\int_I|u(t)-u(s)|^pds dt \right)^{1/p}
\leq \left(\int_I \!\int_I \frac{|u(t)-u(s)|^p}{|t-s|^2}ds dt \right)^{1/p}.
\end{align*}
This implies that $\Vert u \Vert_* \leq \Vert u \Vert_{B_{p}}$. If $u \in B_{p}(\mathbb R)$,
then $|u(t)-u(s)|^p/|t-s|^2$ is integrable on $\mathbb R^2$. Hence, for any $\varepsilon>0$,
there is some $\delta>0$ such that if $|I|<\delta$ then its integral over $I \times I$ is less than 
$\varepsilon$. This shows that $u \in {\rm VMO}(\mathbb R)$.
\end{proof}

Moreover, we see that each element of $B_p^{\mathbb R}(\mathbb R)$ corresponds to an $A_\infty$-weight.

\begin{proposition}\label{BpA}
If a weight $\omega \geq 0$ satisfies that $\log \omega \in B_p^{\mathbb R}(\mathbb R)$, then
$\log \omega$ is in the closure of $L^\infty(\mathbb R)$ in the BMO norm. In particular,
$\omega \in A_2 \subset A_\infty$.
\end{proposition}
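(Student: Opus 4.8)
The plan is to first show that $u := \log\omega$ lies in the BMO-closure of $L^\infty(\mathbb R)$ by truncating $u$, and then to deduce the $A_2$ property from Propositions~\ref{C_0} and~\ref{BMOnorm}. Since $\|u\|_* \le \|u\|_{B_p} < \infty$ by Proposition~\ref{BMOnorm}, $u$ is a genuine BMO function, in particular locally integrable and finite almost everywhere. For $N > 0$, set $u_N(x) = \max\{-N, \min\{N, u(x)\}\}$, so that each $u_N \in L^\infty(\mathbb R)$ with $\|u_N\|_\infty \le N$.

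First I would estimate the difference $u - u_N = g_N \circ u$, where $g_N(x) = x - \max\{-N, \min\{N, x\}\}$. The function $g_N$ is $1$-Lipschitz (its derivative vanishes on $(-N, N)$ and equals $1$ outside), so pointwise
$$
|(u - u_N)(t) - (u - u_N)(s)| = |g_N(u(t)) - g_N(u(s))| \le |u(t) - u(s)|.
$$
Consequently the integrand defining $\|u - u_N\|_{B_p}^p$ is dominated by the integrable function $|u(t) - u(s)|^p / |t - s|^2$. Moreover, for almost every pair $(t, s)$ the values $u(t), u(s)$ are finite, so once $N$ exceeds $\max\{|u(t)|, |u(s)|\}$ the left-hand side above vanishes; hence the integrand tends to $0$ pointwise a.e. as $N \to \infty$. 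By the dominated convergence theorem, $\|u - u_N\|_{B_p} \to 0$, and then Proposition~\ref{BMOnorm} gives $\|u - u_N\|_* \le \|u - u_N\|_{B_p} \to 0$. Since each $u_N \in L^\infty(\mathbb R)$, this proves that $u = \log\omega$ belongs to the closure of $L^\infty(\mathbb R)$ in the BMO norm.

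For the $A_2$ conclusion I would exploit the decomposition just obtained. Choose $N$ large enough that $\|u - u_N\|_* < C_{JN}$, and write $u = b + r$ with $b = u_N \in L^\infty(\mathbb R)$ and $r = u - u_N$. By Proposition~\ref{C_0} applied to the weight $e^r$, whose logarithm $r$ has BMO norm below $C_{JN}$, we get $e^r \in A_2$. Since $e^{-\|b\|_\infty} \le e^{b} \le e^{\|b\|_\infty}$, the factor $e^b$ is bounded above and below by positive constants, and multiplying an $A_2$-weight by such a factor preserves the $A_2$ condition (the two averages in (\ref{Ap}) with $p = 2$ each change by at most a factor $e^{\|b\|_\infty}$). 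Hence $\omega = e^u = e^b\, e^r \in A_2 \subset A_\infty$, completing the proof. The only delicate point is the BMO approximation in the first two paragraphs; once the $1$-Lipschitz domination and pointwise convergence are in place, dominated convergence closes it, and everything else is routine.
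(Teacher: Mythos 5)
Your proof is correct, and its main body coincides with the paper's: the same truncation $u_N=\max\{\min\{u,N\},-N\}$, the same $1$-Lipschitz domination $|(u-u_N)(t)-(u-u_N)(s)|\le|u(t)-u(s)|$, dominated convergence to get $\Vert u-u_N\Vert_{B_p}\to 0$, and Proposition~\ref{BMOnorm} to pass to the BMO norm. Where you genuinely diverge is the final step: the paper concludes $\omega\in A_2$ by citing an external result (\cite[Chap.IV, Theorem 5.14]{GR}, to the effect that weights whose logarithms lie in the BMO-closure of $L^\infty$ are $A_2$), whereas you prove it in-house by writing $u=b+r$ with $b=u_N\in L^\infty$ and $\Vert r\Vert_*<C_{JN}$, invoking Proposition~\ref{C_0} to get $e^r\in A_2$, and then observing that multiplying an $A_2$-weight by a factor pinched between $e^{-\Vert b\Vert_\infty}$ and $e^{\Vert b\Vert_\infty}$ distorts each average in \eqref{Ap} by at most $e^{\Vert b\Vert_\infty}$, hence preserves $A_2$. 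Your finish is self-contained and elementary, keeping the whole proposition internal to the paper's own toolkit; the paper's citation is shorter and delegates the quantitative bookkeeping, but hides the (easy) stability argument you made explicit. Both are valid; yours arguably fits the paper's expository style better, since Proposition~\ref{C_0} was set up precisely for such small-norm applications.
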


\begin{proof}
Let $u=\log \omega$. For any $N>0$, we set $u_N(x)=\max\{\min\{u(x),N\},-N\}$, which belongs to
$L^\infty(\mathbb R)$. We can easily check that
$$
|(u-u_N)(s)-(u-u_N)(t)| \leq |u(s)-u(t)|
$$
for almost all $(s, t) \in \mathbb R^2$. Then, the dominated convergence theorem implies that
$$
\lim_{N \to \infty} \Vert u-u_N \Vert^p_{B_p}=
\lim_{N \to \infty} \int_{-\infty}^{\infty}\!\int_{-\infty}^{\infty} \frac{|(u-u_N)(t)-(u-u_N)(s)|^p}{|t-s|^2}dsdt=0.
$$
Since $\Vert u-u_N \Vert_* \leq \Vert u-u_N \Vert_{B_p}$ by Proposition \ref{BMOnorm}, we see that $u_N \in L^\infty(\mathbb R)$
converges to $u$ in the BMO norm.
In this case, we have $\omega \in A_2$.
See 
\cite[Chap.IV, Theorem 5.14]{GR}.
\end{proof}

We prepare the following three claims on $\rm BMO(\mathbb R)$ and $B_p(\mathbb R)$,
which play key roles in the next two sections. 
Let $I(x,y) \subset \mathbb R$ be the interval $(x-y,x+y)$ for any $x \in \mathbb R$ and $y>0$.

\begin{proposition}\label{prep1}
Let $u$ and $\varphi$ be 
complex-valued functions on $\mathbb R$ 
such that $u \in \rm BMO(\mathbb R)$ and $|\varphi(x)| \leq Ce^{-|x|}$ for some constant $C>0$.
Let $k>0$.
Then,
$$
\int_{\mathbb{R}}|\varphi_y(x-t)||u(t)-u_{I(x,y)}|^kdt \leq C(k) \Vert u \Vert_*^k
$$
for some constant $C(k)>0$.
\end{proposition}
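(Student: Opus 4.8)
The plan is to localize the integral to dyadic scales around $x$ and to reduce each piece to the John--Nirenberg inequality (\ref{JN}). Write $I_0=I(x,y)$ and $I_j=I(x,2^j y)$ for $j\ge 1$, and decompose $\mathbb R$ into the annuli $A_0=I_0$ and $A_j=I_j\setminus I_{j-1}$ for $j\ge 1$. For $t\in A_j$ with $j\ge 1$ we have $|x-t|\ge 2^{j-1}y$, so the hypothesis $|\varphi(s)|\le Ce^{-|s|}$ together with $\varphi_y(s)=y^{-1}\varphi(s/y)$ gives the uniform pointwise bound $|\varphi_y(x-t)|\le Cy^{-1}e^{-2^{j-1}}$; on $A_0$ I only use $|\varphi_y(x-t)|\le Cy^{-1}$. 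The main work is then to estimate $\int_{I_j}|u(t)-u_{I_0}|^k\,dt$ on each scale.

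First I would record the $k$-th moment estimate that does most of the work: for every bounded interval $I$,
\begin{equation*}
\frac{1}{|I|}\int_I |u(t)-u_I|^k\,dt \le C_0\,k\,\Gamma(k)\,C_{JN}^{-k}\,\Vert u\Vert_*^k=:M(k)\,\Vert u\Vert_*^k,
\end{equation*}
which follows from writing the left-hand side as $\int_0^\infty k\lambda^{k-1}\frac{1}{|I|}|\{t\in I:|u(t)-u_I|>\lambda\}|\,d\lambda$ and inserting (\ref{JN}). Second, since $I_{i-1}\subset I_i$ with $|I_i|=2|I_{i-1}|$, the standard telescoping estimate $|u_{I_i}-u_{I_{i-1}}|\le 2\Vert u\Vert_*$ yields $|u_{I_j}-u_{I_0}|\le 2j\Vert u\Vert_*$. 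Splitting $u(t)-u_{I_0}=(u(t)-u_{I_j})+(u_{I_j}-u_{I_0})$ and using the elementary inequality $(a+b)^k\le c_k(a^k+b^k)$ (valid with $c_k=2^{\max\{k-1,0\}}$), I obtain, with $|I_j|=2^{j+1}y$,
\begin{equation*}
\int_{I_j}|u(t)-u_{I_0}|^k\,dt \le c_k\,2^{j+1}y\,\big(M(k)+(2j)^k\big)\,\Vert u\Vert_*^k.
\end{equation*}

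Combining the two bounds, the contribution of $A_j$ for $j\ge 1$ is at most $C c_k\,2^{j+1}e^{-2^{j-1}}\big(M(k)+(2j)^k\big)\Vert u\Vert_*^k$, while the contribution of $A_0$ is at most $2CM(k)\Vert u\Vert_*^k$. Summing over $j$ produces the series $\sum_{j\ge 1}2^{j+1}e^{-2^{j-1}}(M(k)+(2j)^k)$, whose convergence is the one point to check; the super-exponential factor $e^{-2^{j-1}}$ dominates the geometric and polynomial growth, so the sum converges to a finite value depending only on $k$. This gives the claim with $C(k)$ built from $M(k)$, $c_k$, $C$, and the value of the series. I expect the only real obstacle to be bookkeeping: keeping every constant uniform in $x$ and $y$ (which is automatic here because all estimates are scale invariant after normalizing by $|I_j|$) and confirming that the logarithmic-in-scale growth $2j\Vert u\Vert_*$ of the nested averages is harmless against the decay of $\varphi_y$.
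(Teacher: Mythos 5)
Your proposal is correct and follows essentially the same route as the paper's proof: the same dyadic annular decomposition around $I(x,y)$, the pointwise bound $|\varphi_y(x-t)|\lesssim y^{-1}e^{-2^{j-1}}$ on each annulus, the telescoping estimate $|u_{I_j}-u_{I_0}|\le 2j\Vert u\Vert_*$, the splitting $u-u_{I_0}=(u-u_{I_j})+(u_{I_j}-u_{I_0})$, the John--Nirenberg layer-cake moment bound $\frac{1}{|I|}\int_I|u-u_I|^k\le C_0\Gamma(k+1)C_{JN}^{-k}\Vert u\Vert_*^k$, and the observation that the super-exponential decay beats the geometric and polynomial growth in the resulting series. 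The only (immaterial) differences are your slightly sharper constant $c_k=2^{\max\{k-1,0\}}$ in place of the paper's $2^k$ and your handling of the central interval separately rather than folding it into the sum.
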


\begin{proof}
For any integer $n \geq 0$, we consider the average $u_{I_n}$
of $u$ over the interval $I_n=(x-2^ny, x+2^ny)$, where $I_0=I(x,y)$.
Using an inequality  
$$
|u_{I_n}-u_{I_{n-1}}| \leq \frac{1}{|I_{n-1}|}\int_{I_{n-1}}|u-u_{I_n}|dt
\leq \frac{2}{|I_n|}\int_{I_n}|u-u_{I_n}|dt \leq 2 \Vert  u \Vert_* 
$$
repeatedly, we have
\begin{equation}\label{n+0}
|u_{I_n}-u_{I_{0}}| \leq 2n \Vert  u \Vert_*.
\end{equation}

Dividing the integral over $\mathbb R$ into those on dyadic intervals, we obtain
\begin{equation}\label{basic}
\begin{split}
&\quad \int_{\mathbb{R}}|\varphi_y(x-t)||u(t)-u_{I(x,y)}|^kdt\\
& = \int_{|x-t|<y}|\varphi_y(x-t)||u(t)-u_{I_0}|^kdt + \sum_{n=0}^{\infty}\int_{2^ny\leq|x-t| 
< 2^{n+1}y}|\varphi_y(x-t)||u(t)-u_{I_0}|^kdt\\
&\leq \frac{C}{y}\int_{|x-t|<y}e^{-\frac{|x-t|}{y}}|u(t)-u_{I_0}|^k dt 
+ \sum_{n=0}^{\infty}\frac{C}{y}\int_{2^ny\leq|x-t| < 2^{n+1}y}e^{-\frac{|x-t|}{y}}|u(t)-u_{I_0}|^kdt\\
&\leq \frac{2C}{|I_0|}\int_{I_0}|u(t)-u_{I_0}|^kdt 
+ \sum_{n=0}^{\infty}\frac{2^{n+2}C}{e^{2^{n}}|I_{n+1}|}\int_{I_{n+1}}|u(t)- u_{I_0}|^kdt\\
&\leq  
C\sum_{n=0}^{\infty}\frac{2^{n+2}}{e^{2^{n-1}}|I_{n}|}\int_{I_{n}}(|u(t)-u_{I_{n}}| + |u_{I_{n}} - u_{I_0}| )^kdt\\
&\leq C\sum_{n=0}^{\infty}\frac{2^{n+2+k}}{e^{2^{n-1}}}\left(\frac{1}{|I_{n}|}\int_{I_{n}}|u(t)-u_{I_{n}}|^kdt
+\frac{1}{|I_{n}|}\int_{I_{n}}|u_{I_{n}} - u_{I_0}|^kdt\right).
\end{split}
\end{equation}
For the last inequality above, we have used $(a + b)^k \leq 2^{k}(a^k + b^k)$ for $a,b \geq 0$.

Here, by the John--Nirenberg inequality (\ref{JN}), we have 
\begin{equation}\label{k!}
\begin{split}
&\quad \frac{1}{|I_{n}|}\int_{I_{n}}|u(t)-u_{I_{n}}|^kdt\\
&=k\int_0^\infty \frac{1}{|I_n|}|\{t \in I_n \mid |u(t)-u_{I_n}|>\lambda\}|\,\lambda^{k-1} d\lambda\\
&\leq k \int_0^\infty C_0 \exp\left(\frac{-C_{JN} \lambda}{\Vert u \Vert_*}\right) \lambda^{k-1} d\lambda
=\frac{C_0 \Gamma(k+1)}{C_{JN}^k} \Vert u \Vert_*^k.
\end{split}
\end{equation}
Moreover, (\ref{n+0}) yields 
\begin{equation}\label{2n^k}
\begin{split}
\frac{1}{|I_{n}|}\int_{I_{n}}|u_{I_{n}} - u_{I_0}|^kdt \leq (2n)^k \Vert u \Vert_*^k.
\end{split}
\end{equation}
Plugging these inequalities into the last line of (\ref{basic}), we obtain the required estimate.
\end{proof}

\begin{proposition}\label{prep2}
Let $u$ and $\varphi$ be 
complex-valued functions on $\mathbb R$ 
such that $u \in \rm BMO(\mathbb R)$ and $|\varphi(x)| \leq Ce^{-|x|}$ for some constant $C>0$.
If $\Vert u \Vert_* < C_{JN}$, 
then
$$
\int_{\mathbb{R}}|\varphi_y(x-t)|e^{|u(t)-u_{I(x,y)}|}dt \leq C(u)
$$
for a constant $C(u)$ given in terms of $\Vert u \Vert_*$. 
\end{proposition}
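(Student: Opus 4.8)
The plan is to follow the same dyadic decomposition used in the proof of Proposition \ref{prep1}, replacing the polynomial weight $|u(t)-u_{I_0}|^k$ by the exponential weight $e^{|u(t)-u_{I_0}|}$ and invoking the exponential form of the John--Nirenberg inequality already recorded in (\ref{basicJN}). First I would write $\varphi_y(s)=y^{-1}\varphi(s/y)$, so that $|\varphi_y(x-t)|\le (C/y)e^{-|x-t|/y}$, and split the integral over $\mathbb R$ into the central interval $I_0=I(x,y)$ and the dyadic annuli $2^ny\le|x-t|<2^{n+1}y$ for $n\ge 0$, exactly as in (\ref{basic}). On the $n$-th annulus one has $|\varphi_y(x-t)|\le (C/y)e^{-2^n}$, and the annulus is contained in $I_{n+1}$, so each term is controlled by $e^{-2^n}$ times the average of $e^{|u(t)-u_{I_0}|}$ over a nested interval $I_{n}$, after the same re-indexing as in (\ref{basic}).

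The key new step is to pass from the reference interval $I_0$ to the interval $I_n$ on which John--Nirenberg applies directly. Using the chaining estimate (\ref{n+0}), namely $|u_{I_n}-u_{I_0}|\le 2n\Vert u\Vert_*$, I would bound
\[
|u(t)-u_{I_0}|\le |u(t)-u_{I_n}|+2n\Vert u\Vert_*,
\]
whence $e^{|u(t)-u_{I_0}|}\le e^{2n\Vert u\Vert_*}\,e^{|u(t)-u_{I_n}|}$. Since $\Vert u\Vert_*<C_{JN}$, the exponential John--Nirenberg bound computed in (\ref{basicJN}) gives
\[
\frac{1}{|I_n|}\int_{I_n} e^{|u(t)-u_{I_n}|}\,dt \le \frac{C_0\Vert u\Vert_*}{C_{JN}-\Vert u\Vert_*}+1,
\]
a finite constant depending only on $\Vert u\Vert_*$. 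The central term over $I_0$ is handled by the same estimate with $n=0$.

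Assembling these bounds turns the sum in (\ref{basic}) into a series dominated, up to harmless absolute constants, by
\[
C\left(\frac{C_0\Vert u\Vert_*}{C_{JN}-\Vert u\Vert_*}+1\right)\sum_{n=0}^{\infty} \frac{2^{n}\, e^{2n\Vert u\Vert_*}}{e^{2^{n-1}}}.
\]
The main point --- and the only place where anything could go wrong --- is the convergence of this series: the passage from $I_0$ to $I_n$ introduces a factor $e^{2n\Vert u\Vert_*}$ that grows exponentially in $n$, in contrast to the merely polynomial factor $(2n)^k$ in Proposition \ref{prep1}. However, the decay $e^{-2^{n-1}}$ coming from $\varphi$ is doubly exponential, so it dominates both $2^n$ and the singly-exponential factor $e^{2n\Vert u\Vert_*}$ (which is in any case at most $e^{2nC_{JN}}$), and the series converges. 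This yields the desired bound $C(u)$ expressed in terms of $\Vert u\Vert_*$. I expect this convergence check, together with keeping track of where the hypothesis $\Vert u\Vert_*<C_{JN}$ is genuinely used --- it is exactly what makes the John--Nirenberg integral in (\ref{basicJN}) finite --- to be the substantive content; the rest is the bookkeeping already rehearsed in Proposition \ref{prep1}.
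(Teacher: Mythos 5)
Your proposal is correct and follows essentially the same route as the paper: the same dyadic decomposition from Proposition \ref{prep1}, the chaining bound $e^{|u(t)-u_{I_0}|}\le e^{2n\Vert u\Vert_*}e^{|u(t)-u_{I_n}|}$ from (\ref{n+0}), the exponential John--Nirenberg estimate (\ref{basicJN}) on each $I_n$ (which is exactly where $\Vert u\Vert_*<C_{JN}$ enters), and the observation that the doubly exponential decay $e^{-2^{n-1}}$ absorbs the factor $2^{n}e^{2n\Vert u\Vert_*}$. This matches the paper's derivation of (\ref{2cases}) and the concluding estimate verbatim in substance.
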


\begin{proof}
As in the proof of Proposition \ref{prep1}, we have
\begin{equation}\label{2cases}
\begin{split}
&\quad \int_{\mathbb R} |\varphi_y (x-t)|e^{|u(t)-u_{I(x,y)}|} dt\\
&\leq \frac{C}{y}\int_{|x-t|<y}e^{-\frac{|x-t|}{y}}e^{|u(t)-u_{I_0}|}dt 
+ \sum_{n=0}^{\infty}\frac{C}{y}\int_{2^ny\leq|x-t| < 2^{n+1}y}e^{-\frac{|x-t|}{y}}e^{|u(t)-u_{I_0}|}dt\\
 &\leq  
C\sum_{n=0}^{\infty}\frac{2^{n+2}e^{2n \Vert u \Vert_*}}{e^{2^{n-1}}|I_{n}|}\int_{I_{n}}e^{|u(t)-u_{I_{n}}|}dt.
\end{split}
\end{equation}
If $\Vert u \Vert_*< C_{JN}$, then the John--Nirenberg inequality as in (\ref{basicJN}) yields
\begin{equation*}
\begin{split}
\frac{1}{|I_n|}\int_{I_{n}}e^{|u(t)-u_{I_{n}}|}dt
&\leq\frac{C_0 \Vert u \Vert_*}{C_{JN}- \Vert u \Vert_*}+1. 
\end{split}
\end{equation*}
Thus, we obtain the statement of the proposition.
\end{proof}

\begin{lemma}\label{nearreal}
For each $u_0 \in B_p^{\mathbb R}(\mathbb R)$,
there exists a constant $C(u_0)>0$ such that every 
 $u \in B_p(\mathbb R)$ with $\Vert u-u_0 \Vert_{B_p} \leq C_{JN}/4$
satisfies 
$$
\frac{1}{|I|}\int_{I}e^{|u(t)-u_{I}|}dt \leq C(u_0)
$$
for all bounded intervals $I \subset \mathbb R$. Likewise,
$$
\int_{\mathbb{R}}|\varphi_y(x-t)|e^{|u(t)-u_{I(x,y)}|}dt \leq C'(u_0)
$$
is satisfied for another constant $C'(u_0)>0$.
\end{lemma}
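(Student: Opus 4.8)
The plan is to reduce both inequalities to the John--Nirenberg computation (\ref{basicJN}) and to Proposition \ref{prep2} by splitting $u$ into a bounded part and a part of small BMO norm. The obstacle is that $u_0$, though it lies in $B_p^{\mathbb R}(\mathbb R)$, may have arbitrarily large BMO norm, so neither (\ref{basicJN}) nor Proposition \ref{prep2} can be applied to $u$ directly. The resolution is Proposition \ref{BpA}: since $u_0 \in B_p^{\mathbb R}(\mathbb R)$, it lies in the closure of $L^\infty(\mathbb R)$ in the BMO norm, so I may choose a real $v \in L^\infty(\mathbb R)$ with $\Vert u_0 - v \Vert_* \leq C_{JN}/4$ and put $w = u_0 - v$, so that $\Vert w \Vert_* \leq C_{JN}/4$.

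Given $u \in B_p(\mathbb R)$ with $\Vert u - u_0 \Vert_{B_p} \leq C_{JN}/4$, Proposition \ref{BMOnorm} converts this into the BMO bound $\Vert u - u_0 \Vert_* \leq C_{JN}/4$. Setting $\tilde u = u - v = (u - u_0) + w$, I get $\Vert \tilde u \Vert_* \leq \Vert u - u_0 \Vert_* + \Vert w \Vert_* \leq C_{JN}/2 < C_{JN}$. Since $u = v + \tilde u$ and the interval average is linear, for every bounded $I$ I have $u(t) - u_I = (v(t) - v_I) + (\tilde u(t) - \tilde u_I)$, and $|v(t) - v_I| \leq 2\Vert v \Vert_\infty$ by boundedness of $v$. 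The triangle inequality then yields the pointwise factorization $e^{|u(t) - u_I|} \leq e^{2\Vert v \Vert_\infty}\, e^{|\tilde u(t) - \tilde u_I|}$, valid for complex-valued $\tilde u$ as well.

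With this factorization the first estimate follows at once: applying the John--Nirenberg computation (\ref{basicJN}) to $\tilde u$, whose BMO norm is below $C_{JN}$, bounds $\frac{1}{|I|}\int_I e^{|\tilde u(t) - \tilde u_I|}dt$ by $\frac{C_0 \Vert \tilde u \Vert_*}{C_{JN} - \Vert \tilde u \Vert_*} + 1 \leq C_0 + 1$ uniformly in $I$, so $C(u_0) = e^{2\Vert v \Vert_\infty}(C_0 + 1)$ works. For the second estimate I insert the same factorization into the integral against $|\varphi_y(x - t)|$ and apply Proposition \ref{prep2} to $\tilde u$, which is permitted precisely because $\Vert \tilde u \Vert_* < C_{JN}$; since the constant produced there is monotone increasing in the BMO norm, it is dominated by its value at $C_{JN}/2$, and hence $C'(u_0) = e^{2\Vert v \Vert_\infty}\, C(\tilde u)$ with a bound depending on $u_0$ only through $\Vert v \Vert_\infty$. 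The only genuinely nonroutine step is this initial reduction: recognizing, via Proposition \ref{BpA}, that $p$-Besov regularity of $u_0$ forces approximability by $L^\infty$ functions in BMO is exactly what allows the single small-norm hypothesis of Proposition \ref{prep2} to absorb an otherwise arbitrary $u_0$.
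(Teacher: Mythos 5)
Your proof is correct and follows essentially the same route as the paper: both use Proposition \ref{BpA} to split $u_0$ into a real $L^\infty$ function plus a remainder of BMO norm at most $C_{JN}/4$, absorb the bounded part into the factor $e^{2\Vert v \Vert_\infty}$, and apply the John--Nirenberg computation (\ref{basicJN}) to the resulting small-BMO function $\tilde u = u - v$. The only cosmetic difference is in the second estimate, where the paper cites the intermediate inequality (\ref{2cases}) directly, while you factor first and then invoke Proposition \ref{prep2} for $\tilde u$ (with the needed monotonicity of its constant in the BMO norm, which you correctly note); these amount to the same computation.
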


\begin{proof}
We fix any  $u_0 \in B_p^{\mathbb R}(\mathbb R)$.
By Proposition \ref{BpA}, there exists some $b \in L^\infty(\mathbb R)$
such that $\Vert u_0-b \Vert_* \leq C_{JN}/4$.
Then, $\Vert u-b \Vert_* \leq C_{JN}/2$ for every $u \in B_p(\mathbb R)$ 
with $\Vert u-u_0 \Vert_* \leq \Vert u-u_0 \Vert_{B_p} \leq C_{JN}/4$.
Applying the John--Nirenberg inequality as in (\ref{basicJN}), we have
\begin{equation*}
\begin{split}
\frac{1}{|I|}\int_{I}e^{|u(t)-b(t)-u_{I}+b_I|}dt
&\leq\frac{C_0 \Vert u-b \Vert_*}{C_{JN}- \Vert u-b \Vert_*}+1 \leq 2C_0+1. 
\end{split}
\end{equation*}
This implies that
$$
\frac{1}{|I|}\int_{I}e^{|u(t)-u_{I}|}dt \leq (2C_0+1)e^{2 \Vert b \Vert_\infty}.
$$
Then by (\ref{2cases}), the latter statement also follows.
\end{proof}

\section{The variant of the Beurling--Ahlfors extension for BMO}
Hereafter, we use the following convention.
The notation $A(u) \asymp B(u)$ concerning formulas of $u$ means that there is a constant $C \geq 1$ 
independent of $u$ such that $A(u)/C \leq B(u) \leq CA(u)$.
The notation $A(u) \lesssim B(u)$ means that there is such a constant $C$ that satisfies
$A(u) \leq CB(u)$. 
We also use a variant of this convention in the following case even if $C$ depends on $u$ in general.
When a function $u$ is in ${\rm BMO}(\mathbb R)$, 
the notation $A(u) \asymp B(u)$
means that there is a constant $C(u) \geq 1$ such that $A(u)/C(u) \leq B(u) \leq C(u)A(u)$,
where $C(u)$ is bounded whenever $\Vert u \Vert_*$ is bounded.

Beurling and Ahlfors \cite{BA} characterized the boundary value of a quasiconformal homeo\-morphism of
the upper half-plane $\mathbb U $ onto itself as a quasisymmetric homeomorphism $f$ of the real line $\mathbb R$.
Here, an increasing homeomorphism $f$ of $\mathbb R$ onto itself is {\it quasisymmetric} if there is a constant $\rho >1$,
which is called the doubling constant, such that
$|f(2I)| \leq \rho |f(I)|$ for any bounded interval $I \subset \mathbb R$, where $|\cdot|$ is the Lebesgue measure and
$2I$ denotes the interval of the same center as $I$
with $|2I|=2|I|$. 
Let 
$\phi(x)=\frac{1}{2} 1_{[-1,1]}(x)$ and $\psi(x)=\frac{r}{2} 1_{[-1,0]}(x)+\frac{-r}{2} 1_{[0,1]}(x)$
for some $r>0$, where $1_E$ denotes the characteristic function of $E \subset \mathbb R$. For any function $\varphi(x)$ 
on $\mathbb R$ and for $y>0$, we set $\varphi_y(x)=y^{-1} \varphi(y^{-1}x)$.
Then, for a quasisymmetric homeomorphism $f$, the Beurling--Ahlfors extension
$F(x,y)=(U(x,y),V(x,y))$ for $(x,y) \in \mathbb U$ is defined by the convolutions
$U(x,y)=(f \ast \phi_y)(x)$ and $V(x,y)=(f \ast \psi_y)(x)$.
Modification and variation to the Beurling--Ahlfors extension have been made by changing the functions
$\phi$ and $\psi$. 

For a complex-valued function $u$ on $\mathbb R$ 
such that $u \in \rm BMO(\mathbb R)$, we consider
a curve $\gamma_u=\gamma: \mathbb{R} \to \mathbb{C}$ given by
\begin{equation}\label{gamma}
\gamma(x) = \gamma(0) + \int_0^x e^{u(t)} dx.
\end{equation}
Let $\phi(x)=\frac{1}{\sqrt \pi}e^{-x^2}$ and $\psi(x)=\phi'(x)=-2x \phi(x)$.
Then, we extend $\gamma$ to the upper half-plane 
$\mathbb U$ setting a differentiable map $F_u=F: \mathbb{U} \to \mathbb{C}$ by 
\begin{equation}\label{F}
\begin{split}
&F(x, y) = U(x, y) + iV(x, y);\\
U(x,y)&=(\gamma \ast \phi_y)(x),\ V(x,y)=(\gamma \ast \psi_y)(x).
\end{split}
\end{equation}

The partial derivatives of
$U$ and $V$ can be represented as follows:
\begin{align*}
U_x(x,y)&=\frac{\partial U}{\partial x}=(e^u \ast \phi_y)(x);\\
V_x(x,y)&=\frac{\partial V}{\partial x}=(e^u \ast \psi_y)(x);\\
U_y(x,y)&=\frac{\partial U}{\partial y}=(\gamma \ast \frac{\partial \phi_y}{\partial y})(x)
=\frac{1}{2}(e^u \ast \psi_y)(x)=\frac{1}{2}V_x(x,y);\\
V_y(x,y)&=\frac{\partial V}{\partial y}=(\gamma \ast \frac{\partial \psi_y}{\partial y})(x)
=U_x(x,y)+\frac{y^2}{2}(e^u \ast (\phi_y)'')(x),
\end{align*} 
where we have used
\begin{align*}
\frac{\partial \phi_y}{\partial y}&=\frac{y}{2}\frac{\partial^2 \phi_y}{\partial x^2} =\frac{1}{2}(\psi_y)'; \\ 
\frac{\partial \psi_y}{\partial y}&=\frac{\partial \phi_y}{\partial x}+\frac{y^2}{2} \frac{\partial^3 \phi_y}{\partial x^3}
=(\phi_y)'+\frac{y^2}{2}(\phi_y)'''.
\end{align*}
In particular, each of 
$U_y(x,y)$, $V_x(x,y)$, and $(U_x-V_y)(x,y)$ can be represented by the convolution $(e^u \ast a_y)(x)$ explicitly
for a certain real-valued function $a \in C^{\infty}(\mathbb R)$
such that $\int_{\mathbb R} a(x)dx=0$, $|a(x)|$ is an even function, and $a(x)=O(x^2e^{-x^2})$ $(|x| \to \infty)$. 
For instance, 
$V_x(x,y)=(e^u \ast \psi_y)(x)$
for $\psi(x)=-\frac{2}{\sqrt \pi}xe^{-x^2}$. 

Next, we consider the complex derivatives 
\begin{align*}
F_{\bar z}(x,y) &= \frac{1}{2}(F_x + iF_y) = \frac{1}{2}\left((U_x - V_y) + iU_y + iV_x \right);\\
F_z(x,y) &= \frac{1}{2}(F_x - iF_y) = U_x+\frac{1}{2}\left(-(U_x- V_y) - iU_y+ iV_x  \right). 
\end{align*}
From these expressions, we can find two complex-valued functions $\alpha, \beta \in C^{\infty}(\mathbb R)$ 
independent of $u$ such that
\begin{equation}\label{alphabeta}
F_{\bar{z}}=e^u \ast \alpha_y(x), \quad F_{z}=e^u \ast \beta_y(x),
\end{equation}
and $\int_{\mathbb{R}}\alpha(x)dx = 0$, $\alpha(x)=O(x^2e^{-x^2})$ $(|x| \to \infty)$, 
$\int_{\mathbb{R}}\beta(x)dx = 1$, $\beta(x)=O(x^2e^{-x^2})$ $(|x| \to \infty)$.
In particular, we can assume that
$$
|\alpha(x)| \leq Ce^{-|x|}, \quad |\beta(x)| \leq Ce^{-|x|}
$$
for some constant $C>0$. 
We set $\mu_u(x,y)=F_{\bar z}/F_z$, and call it the complex dilatation of $F$ even though the map $F=F_u$ by $(\ref{F})$ is not necessarily quasiconformal. 

In the case where $u$ is a real-valued function such that $e^u$ is
an $A_\infty$-weight, the situation becomes simpler.
In this case, the extension $F:\mathbb U \to \mathbb U$ of $\gamma:\mathbb R \to \mathbb R$ is
the variant of the Beurling--Ahlfors extension by the heat kernel introduced in \cite{FKP}.
The following result is obtained in \cite[Theorems 3.4, 4.1]{WM-2}.

\begin{theorem}\label{FKP}
For a real-valued $u \in {\rm BMO}(\mathbb R)$ such that $e^u$ is an $\rm A_\infty$-weight,
the map $F_u$ induced by $u$ is a quasiconformal
diffeomorphism of $\mathbb U$ onto itself. Moreover, for the complex dilatation $\mu_u$ of $F_u$,
$\frac{1}{y}|\mu_u(x,y)|^2 dxdy$ is a Carleson measure on $\mathbb U$. If $u$ belongs to ${\rm VMO}(\mathbb R)$ in addition,
then this is a vanishing Carleson measure.
\end{theorem}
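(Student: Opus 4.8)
The plan is to route the entire argument through the single positive function $h(x,y)=U_x(x,y)=(e^u\ast\phi_y)(x)>0$. First I would pin down the boundary behaviour: since $\psi_y=y(\phi_y)'$, an integration by parts gives $V=\gamma\ast\psi_y=y(\gamma'\ast\phi_y)=y(e^u\ast\phi_y)=yh$, so $V>0$ on all of $\mathbb U$ and $F(x,y)\to\gamma(x)$ as $y\to 0^+$; thus $F(\mathbb U)\subset\mathbb U$ with the correct boundary values. Next, using the heat relation $\partial_y\phi_y=\tfrac{y}{2}\partial_x^2\phi_y$ already recorded in the text, I would rewrite the basic derivatives as $V_x=y\partial_x h$, $U_y=\tfrac{y}{2}\partial_x h$, and $U_x-V_y=-y\partial_y h$. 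Substituting these into the formulas for $F_{\bar z}$ and $F_z$ expresses the complex dilatation as a \emph{fixed} rational function of the two scale-invariant quantities $P=y\,\partial_x\log h$ and $Q=y\,\partial_y\log h$, namely $\mu_u=(-Q+\tfrac{3i}{2}P)/(2+Q+\tfrac{i}{2}P)$.

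The key is then a clean criterion for $|\mu_u|<1$. A direct computation gives
\[
1-|\mu_u|^2=\frac{2\,(2+y^2\partial_x^2\log h)}{(2+Q)^2+\tfrac14 P^2},
\]
so quasiconformality is equivalent to a uniform lower bound $y^2\partial_x^2\log h\ge -2+\delta$ for some $\delta>0$. Introducing the probability measure $d\nu_{x,y}(t)\propto \phi_y(x-t)e^{u(t)}\,dt$, one finds $P=\tfrac{2}{y}\,\mathbb E_{\nu}[\,t-x\,]$ and the decisive identity $y^2\partial_x^2\log h=\tfrac{4}{y^2}\,\mathrm{Var}_{\nu_{x,y}}(t)-2$. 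Hence the whole quasiconformality statement collapses to the anti-concentration estimate $\mathrm{Var}_{\nu_{x,y}}(t)\gtrsim y^2$: the $e^u$-weighted Gaussian at scale $y$ must not concentrate on a subinterval much shorter than $y$. This is exactly where I would invoke the $A_\infty$ hypothesis through (\ref{SD}): if the variance were $\le\varepsilon y^2$, then $\nu_{x,y}$ would place almost all its mass on an interval $J$ of length $\asymp\sqrt{\varepsilon}\,y$ inside $I(x,y)$, whereas (\ref{SD}) forces $\int_J e^u\big/\int_{I(x,y)} e^u\lesssim (|J|/|I(x,y)|)^{\alpha}\to 0$, a contradiction. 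The same doubling and moment bounds give $|P|,|Q|\lesssim 1$, so the denominator stays bounded and $|\mu_u|\le k<1$ with $k$ depending only on the $A_\infty$-data.

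For the Carleson property, since $2+Q\ge 1$ the denominator of $\mu_u$ is bounded below, so $\tfrac1y|\mu_u|^2\lesssim \tfrac1y(P^2+Q^2)=y|\nabla\log h|^2$, and it suffices to prove that $y|\nabla\log(e^u\ast\phi_y)|^2\,dxdy$ is a Carleson measure. I would bound $\nabla\log h$ pointwise by the normalized oscillation of $e^u$ at scale $y$, using $\int\nabla\phi_y=0$ to subtract the local average in the numerator and $h\gtrsim e^{u_{I(x,y)}}$ (Jensen together with (\ref{iff})) in the denominator. Then the dyadic decomposition of Propositions \ref{prep1} and \ref{prep2} applies, except that for a general $A_\infty$-weight (large $\Vert u\Vert_*$) I must replace the small-norm exponential estimate by the reverse Hölder inequality satisfied by $e^u$ in order to control the oscillation of $e^u$ by that of $u$; summing the resulting John--Nirenberg bounds yields the Carleson estimate in terms of $\Vert u\Vert_*$. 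When $u\in{\rm VMO}(\mathbb R)$, the localized constants tend to $1$ as $|I|\to0$, so the box averages over $Q_I$ vanish, giving a vanishing Carleson measure.

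Finally, $|\mu_u|\le k<1$ makes $F$ quasiregular with Jacobian $|F_z|^2(1-|\mu_u|^2)>0$, hence a local diffeomorphism; combined with the boundary homeomorphism $\gamma$ and properness of $F$, a standard monodromy/degree argument promotes this to a diffeomorphism of $\mathbb U$ onto $\mathbb U$. I expect the main obstacle to be the variance lower bound $\mathrm{Var}_{\nu_{x,y}}(t)\gtrsim y^2$: turning the qualitative non-concentration of an $A_\infty$-weight into a uniform quantitative estimate, valid with \emph{no} smallness assumption on $\Vert u\Vert_*$, is the crux, and it is precisely here that one must use the full strength of (\ref{SD})/(\ref{iff}) rather than the small-norm exponential estimates of Propositions \ref{prep1} and \ref{prep2}.
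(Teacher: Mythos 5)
You should note at the outset that the paper itself does not prove Theorem \ref{FKP}: it quotes the result from \cite[Theorems 3.4, 4.1]{WM-2} (which in turn follows Fefferman--Kenig--Pipher \cite{FKP}), so any self-contained argument is necessarily a different route, and the only question is correctness. Your first half --- the quasiconformal diffeomorphism statement --- is correct. The identities $V=yU_x=yh$, $\mu_u=\bigl(-Q+\tfrac{3i}{2}P\bigr)/\bigl(2+Q+\tfrac{i}{2}P\bigr)$, $2+Q=1+\tfrac{2}{y^2}\mathbb{E}_{\nu}\bigl[(t-x)^2\bigr]\ge 1$, and $2+y^2\partial_x^2\log h=\tfrac{4}{y^2}{\rm Var}_{\nu}(t)$ all check out (using the heat relation $h_y=\tfrac{y}{2}h_{xx}$); the moment bounds $|P|,|Q|\lesssim 1$ follow from doubling against Gaussian decay; and the anti-concentration step via (\ref{SD}) is sound once you observe $|\mathbb{E}_{\nu}[t]-x|\lesssim y$, so that the Chebyshev interval $J$ sits inside a fixed dilate of $I(x,y)$. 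In fact your variance identity is precisely the Jacobian bound quoted in the proof of Proposition \ref{biLipproperty}, since $U_xV_y-U_yV_x=h^2\bigl(1+Q-\tfrac12 P^2\bigr)=\tfrac{2h^2}{y^2}{\rm Var}_{\nu}(t)$; so this part is a clean probabilistic repackaging of the argument of \cite{FKP}, \cite{WM-2}, and the concluding degree argument is standard (compare Theorem \ref{smallcase}).

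The Carleson part, however, has a genuine gap. The reduction $\tfrac1y|\mu_u|^2\lesssim\tfrac1y(P^2+Q^2)=y|\nabla\log h|^2$ is fine, but your proposed proof of the Carleson property --- bound $|\nabla h|$ by the absolute oscillation, i.e.\ by $N(x,y)=\tfrac{y}{h(x,y)}\int|\nabla\phi_y(x-t)|\,|e^{u(t)}-c_{x,y}|\,dt$ with $c_{x,y}$ the mean of $e^u$ over $I(x,y)$, and then sum John--Nirenberg/reverse-H\"older bounds --- cannot work, because the intermediate claim is false: $\tfrac1y N(x,y)^2\,dxdy$ need not be a Carleson measure even for $A_2$ weights whose logarithm has small BMO norm. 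Take $u=\sum_n a_n g_n$, where $g_n$ is a random-sign step function at dyadic scale $2^{-n}$ and $\sum_n a_n^2$ is small but $\sum_n na_n^2=\infty$ (say $a_n^2=1/(n\log^2 n)$); then $e^u\in A_2$, while Khintchine's inequality gives $N(x,y)\gtrsim\bigl(\sum_{n>m}a_n^2\bigr)^{1/2}$ at scales $y\asymp 2^{-m}$ (the linearization error is of lower order), so the integral of $N^2/y$ over a unit Carleson box diverges like $\sum_n na_n^2$. The underlying point is that once absolute values are placed inside the convolution, the cancellation $\int\nabla\phi_y=0$ --- the only source of integrated decay in $y$ --- is lost: the absolute oscillation accumulates contributions from all scales finer than $y$, whereas $\mu_u$ itself only feels the scale $y$ itself. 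The dyadic decomposition of Propositions \ref{prep1}--\ref{prep2} produces pointwise bounds of size $\Vert u\Vert_*$, and $\Vert u\Vert_*^2\,dxdy/y$ is never Carleson. (This also explains why the same absolute-value manipulations do succeed in Lemma \ref{mainlemma}: the $B_p$ norm, unlike the BMO norm, carries exactly the right weight over all scales.) Repairing this step requires genuine square-function input --- e.g.\ Plancherel applied to the localized mean-zero part of $e^u$ combined with the reverse H\"older inequality, which is the actual content of \cite[Theorem 4.1]{WM-2} following \cite{FKP} --- and the vanishing-Carleson claim for VMO inherits the same gap.
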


We say that a measure $\lambda(x,y)dxdy$ is a {\it Carleson measure} on $\mathbb U$ if
$$
\sup_{I \subset \mathbb R} \frac{1}{|I|}\int_0^{|I|}\!\! \int_I \lambda(x,y)dxdy 
$$
is bounded, where the supremum is taken over all bounded intervals $I$ in $\mathbb R$. Moreover, a Carleson measure
is {\it vanishing} if 
$$
\lim_{|I| \to 0} \frac{1}{|I|} \int_0^{|I|}\!\! \int_I \lambda(x,y)dxdy =0.
$$

Now, we add one more property on $F_u$ to this theorem. 
We say that a diffeomorphism $F(x,y)$ of $\mathbb U$ onto itself is bi-Lipschitz
with respect to the hyperbolic metric with constant $L \geq 1$ if
$$
\frac{1}{Ly} \leq \frac{| d_{(x,y)}F(v) |}{{\rm Im}\, F(x,y)} \leq  \frac{L}{y}
$$
for any $(x,y) \in \mathbb U$ and any unit tangent vector $v$ at $(x,y)$. The derivative satisfies
\begin{equation}\label{qcderivative}
\frac{1}{K}|F_z(x,y)| 
\leq | d_{(x,y)}F(v) | 
\leq K|F_z(x,y)|
\end{equation}
for the maximal dilatation $K=(1+\Vert \mu \Vert_\infty)/(1-\Vert \mu \Vert_\infty)$ with $\mu(x,y)=F_{\bar z}(x,y)/F_z(x,y)$.

\begin{proposition}\label{biLipproperty}
The quasiconformal
diffeomorphism $F_u$ in Theorem \ref{FKP} is
bi-Lipschitz with respect to the hyperbolic metric on $\mathbb U$. 
The maximal dilatation $K$ and the bi-Lipschitz constant $L$ of $F_u$ depend only on
the doubling constant $\rho$ of $e^u$. If $\Vert u \Vert_* <C_{JN}$,
then $\rho$ depends only on $\Vert u \Vert_*$.
\end{proposition}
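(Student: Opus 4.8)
The plan is to reduce the hyperbolic bi-Lipschitz estimate to the single comparison $|F_z(x,y)| \asymp U_x(x,y)$ with constants controlled by $\rho$, and then to establish this comparison by elementary convolution estimates together with the quasiconformality already furnished by Theorem \ref{FKP}. First, recall from the listed partial derivatives that $\mathrm{Im}\,F = V = yU_x$: since $\psi_y = y(\phi_y)'$, one has $V = \gamma \ast \psi_y = y(\gamma' \ast \phi_y) = yU_x$ with $U_x = (e^u \ast \phi_y) > 0$. Hence $y^{-1}\mathrm{Im}\,F(x,y) = U_x(x,y)$, and by the quasiconformal derivative bound (\ref{qcderivative}) with maximal dilatation $K$, it suffices to prove $|F_z| \asymp U_x$ with constants depending only on $\rho$; the bi-Lipschitz constant $L$ is then assembled from $K$ and these comparison constants, while $K$ depends only on $\rho$ by the construction underlying Theorem \ref{FKP}.

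For the upper bound I would combine the partial derivatives with $U_y = \tfrac12 V_x$ to read off $F_z = \bigl(U_x + \tfrac{y^2}{4}(e^u\ast(\phi_y)'')\bigr) + \tfrac{i}{4}V_x$, so that $|F_z| \le U_x + \tfrac14\bigl|y^2(e^u\ast(\phi_y)'')\bigr| + \tfrac14|V_x|$. Writing $I = I(x,y)$ and $m_I = |I|^{-1}\int_I e^u\,dt$, the positivity of the Gaussian gives at once the lower comparison $U_x \ge \tfrac{2e^{-1}}{\sqrt\pi}\, m_I$. Each correction term is a convolution $(e^u \ast a_y)$ with $a \in \{\psi,\ \phi''\}$ satisfying $|a(x)| \le Ce^{-|x|}$ (note $y^2(\phi_y)'' = (\phi'')_y$); decomposing $\mathbb R$ into the dyadic intervals $I_n = I(x,2^n y)$ exactly as in (\ref{basic}) and invoking the doubling bound $\int_{I_n} e^u \le \rho^n \int_I e^u$, one estimates each such convolution by $C\sum_{n\ge 0} e^{-2^{n-1}}\rho^n\, m_I \lesssim m_I$, with the constant depending only on $\rho$. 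Combined with $m_I \lesssim U_x$, this yields $|F_z| \lesssim U_x$.

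The lower bound $|F_z| \gtrsim U_x$ looks like the delicate point, because the crude estimates above do not by themselves keep the Jacobian $J_F = U_xV_y - \tfrac12 V_x^2$ away from zero; but it is in fact immediate from quasiconformality. Indeed $F_z + F_{\bar z} = F_x = U_x + iV_x$, so $U_x = \mathrm{Re}\,F_x \le |F_z + F_{\bar z}| \le |F_z| + |F_{\bar z}| \le (1+k)|F_z|$, where $k = \Vert \mu_u \Vert_\infty < 1$ is the dilatation bound of Theorem \ref{FKP}; thus $|F_z| \ge (1+k)^{-1}U_x$, with $k$ and hence the constant depending only on $\rho$. This gives $|F_z| \asymp U_x$ and therefore the bi-Lipschitz property, with both $K$ and $L$ depending only on $\rho$. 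Finally, if $\Vert u \Vert_* < C_{JN}$, then by Proposition \ref{C_0} the weight $e^u$ lies in $A_2$ with $A_2$-constant controlled by $\Vert u \Vert_*$; since the doubling constant of $e^u\,dx$ is dominated by a function of the $A_\infty$-constant, $\rho$ depends only on $\Vert u \Vert_*$, as asserted.
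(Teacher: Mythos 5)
Your proposal is correct, and it establishes the key comparison $|F_z| \asymp U_x = \frac{1}{y}\,\mathrm{Im}\,F$ by a genuinely different route from the paper. The paper imports both comparabilities $U_x^2+U_y^2+V_x^2+V_y^2 \asymp U_x^2$ and $U_xV_y-U_yV_x \gtrsim U_x^2$ from the proof of Theorem 3.3 of \cite{WM-2}, and then sandwiches $|F_z|^2$ between the Jacobian and the sum of squares. You instead prove the upper bound $|F_z| \lesssim U_x$ from scratch: the exact formula $F_z = U_x + \frac{y^2}{4}\,(e^u\ast(\phi_y)'') + \frac{i}{4}V_x$, the Gaussian pointwise bound $U_x \gtrsim m_I$ with $m_I=|I(x,y)|^{-1}\int_{I(x,y)}e^u\,dt$, and the dyadic-annulus estimate with the iterated doubling bound $\int_{I_n}e^u \le \rho^n \int_{I_0}e^u$ --- the same technique as \eqref{basic}, but applied to the weight $e^u$ rather than to $u$. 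Your lower bound is then a one-line consequence of quasiconformality, $U_x = \mathrm{Re}\,F_x \le |F_z|+|F_{\bar z}| \le (1+k)|F_z| \le 2|F_z|$, which in fact gives an absolute constant $\tfrac12$ independent of $\rho$ (you need not invoke any dependence of $k$ on $\rho$ here), neatly bypassing the Jacobian lower bound that the paper cites. What your argument buys is self-containedness of the comparison $|F_z|\asymp U_x$; what it does not buy is the sub-claim that the maximal dilatation $K$ itself depends only on $\rho$: your own estimates only yield $|F_{\bar z}| \le C(\rho)|F_z|$ with a constant that may exceed $1$, so for $K=K(\rho)$ you, exactly like the paper, fall back on the external construction (your phrase ``by the construction underlying Theorem \ref{FKP}'' plays the same role as the paper's citation of the proof of \cite[Theorem 3.3]{WM-2}). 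Since the paper treats this point as an import as well, this is not a gap relative to the paper's own standard. Your final step --- Proposition \ref{C_0} giving the $A_2$-constant in terms of $\Vert u \Vert_*$, and the doubling constant being controlled by the $A_2$/$A_\infty$-constant --- coincides with the paper's argument.
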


\begin{proof}
We assume that $F=F_u$ is $K$-quasiconformal, and consider
\begin{align*}
|F_z(x,y)|^2&=\frac{1}{4}(U_x^2+U_y^2+V_x^2+V_y^2)+\frac{1}{2}(U_x V_y-U_y V_x)\\
&= \frac{1}{2}(U_x^2+U_y^2+V_x^2+V_y^2)-|F_{\bar z}(x,y)|^2\\
&\leq \frac{1}{2}(U_x^2+U_y^2+V_x^2+V_y^2).
\end{align*}
In the proof of \cite[Theorem 3.3]{WM-2}, we see that
\begin{equation*}\label{lip2}
U_x^2+U_y^2+V_x^2+V_y^2 \asymp U_x^2; \quad U_x V_y-U_y V_x \gtrsim U_x^2,
\end{equation*}
where the comparability is given in terms of the doubling constant $\rho$ of $e^u$.
Therefore, we have
\begin{equation}\label{Fz}
\begin{split}
|F_z(x,y)|\asymp U_x(x,y)&=(e^u \ast \phi_y)(x)=(\gamma \ast (\phi_y)')(x)\\
&=\frac{1}{y}(\gamma \ast \psi_y)(x)=\frac{1}{y}V(x,y)=\frac{1}{y}{\rm Im}\,F(x,y).
\end{split}
\end{equation}
Thus, the bi-Lipschitz constant $L$ of $F$ depends only on $\rho$ and $K$ by (\ref{qcderivative}) and (\ref{Fz}). Moreover,
again in the proof of \cite[Theorem 3.3]{WM-2}, we see that $K$ also depends only on $\rho$.

If $\Vert u \Vert_*<C_{JN}$, then $e^u \in A_2$ and the $A_2$-constant 
is given in terms of $\Vert u \Vert_*$ as is shown in Proposition \ref{C_0}.
Since the doubling constant for an $A_2$-weight depends only on the $A_2$-constant
(see \cite[Section 3]{CF}),
we see that $\rho$ depends only on $\Vert u \Vert_*$.
\end{proof}

We return to the general case where $u \in {\rm BMO}(\mathbb R)$ is complex-valued.  Parallel to \eqref{F}, the curve $\gamma_u = \gamma: \mathbb R \to \mathbb C$ given by \eqref{gamma}
is extendable also to the lower half-plane $\mathbb L$
 setting $F_u = F: \mathbb L \to \mathbb C$ by
\begin{equation}\label{lowerhalfplane}
\begin{split}
&F(x, y) = U(x, -y) - iV(x, -y);\\
U(x,y)&=(\gamma \ast \phi_y)(x),\ V(x,y)=(\gamma \ast \psi_y)(x). 
\end{split}
\end{equation}

In the rest of this section, 
we will show that under a small norm condition on $u \in {\rm BMO}(\mathbb R)$,
the variant of the Beurling--Ahlfors extension $F_u$ by the heat kernel yields
a quasiconformal homeomorphism of $\mathbb C$ (see Theorem \ref{smallcase} below).
This is an analogous result to that by Semmes \cite{Se} who
considered the case that the kernels $\phi$ and $\psi$ of the convolution are compactly supported.
In fact, the following two claims toward Theorem \ref{smallcase} are based on those in \cite[p.251]{Se}.

\begin{proposition}\label{Le}
Let $u$ and $\varphi$ be 
complex-valued functions on $\mathbb R$ 
such that $u \in \rm BMO(\mathbb R)$, $|\varphi(x)| \leq Ce^{-|x|}$ for some constant $C>0$, and $\int_{\mathbb{R}}\varphi(x)dx = 1$. 
Let $I(x,y) \subset \mathbb R$ be the interval $(x-y,x+y)$ for any $x \in \mathbb R$ and $y>0$.
Then, 
\begin{equation*}
|e^{\varphi_y \ast u(x)}| \asymp |e^{u_{I(x,y)}}|  
\end{equation*}
is satisfied.
\end{proposition}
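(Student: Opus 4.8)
The plan is to reduce the multiplicative comparison $|e^{\varphi_y \ast u(x)}| \asymp |e^{u_{I(x,y)}}|$ to an additive bound on the exponent $\varphi_y \ast u(x) - u_{I(x,y)}$, and then read off that bound directly from Proposition \ref{prep1}. First I would exploit the normalization $\int_{\mathbb R}\varphi(x)dx=1$: since $\varphi_y(x)=y^{-1}\varphi(y^{-1}x)$, a change of variables gives $\int_{\mathbb R}\varphi_y(x-t)\,dt=1$, so the constant $u_{I(x,y)}$ can be pulled inside the convolution. This yields the identity
$$
\varphi_y \ast u(x) - u_{I(x,y)} = \int_{\mathbb R}\varphi_y(x-t)\bigl(u(t)-u_{I(x,y)}\bigr)\,dt,
$$
whose right-hand side converges absolutely by Proposition \ref{prep1} (taking $k=1$); in particular this also legitimizes the convolution $\varphi_y \ast u(x)$ for a BMO function $u$ despite its possible slow growth.

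Next, since $u$ is complex-valued, I would pass to moduli through real parts. Writing $|e^{w}|=e^{{\rm Re}\,w}$, the quotient of interest becomes
$$
\frac{|e^{\varphi_y \ast u(x)}|}{|e^{u_{I(x,y)}}|}
= \exp\Bigl({\rm Re}\,\bigl(\varphi_y \ast u(x) - u_{I(x,y)}\bigr)\Bigr).
$$
The exponent is controlled in absolute value by
$$
\bigl|{\rm Re}\,\bigl(\varphi_y \ast u(x) - u_{I(x,y)}\bigr)\bigr|
\leq \int_{\mathbb R}|\varphi_y(x-t)|\,|u(t)-u_{I(x,y)}|\,dt
\leq C(1)\,\Vert u \Vert_*,
$$
where the last inequality is exactly Proposition \ref{prep1} with $k=1$ (the hypothesis $|\varphi(x)|\leq Ce^{-|x|}$ is in force).

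Finally I would set $C(u)=e^{C(1)\Vert u \Vert_*}\geq 1$ and conclude
$$
C(u)^{-1}\,|e^{u_{I(x,y)}}| \leq |e^{\varphi_y \ast u(x)}| \leq C(u)\,|e^{u_{I(x,y)}}|,
$$
which is the asserted comparison in the BMO sense, since $C(u)$ depends only on $\Vert u \Vert_*$ and is bounded whenever $\Vert u \Vert_*$ is bounded (indeed $C(u)\to 1$ as $\Vert u\Vert_*\to 0$). There is no serious obstacle here once Proposition \ref{prep1} is available: the whole argument is the observation that the first absolute moment of $u$ against the decaying kernel $\varphi_y$ around its local mean is $O(\Vert u\Vert_*)$. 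The only point requiring minor care is the passage to real parts for complex-valued $u$, ensuring that it is ${\rm Re}\,(\varphi_y\ast u - u_{I(x,y)})$, and not the full complex quantity, that governs the modulus — the imaginary part is irrelevant to $|e^{\,\cdot\,}|$.
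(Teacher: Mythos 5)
Your proof is correct and follows essentially the same route as the paper: both apply Proposition \ref{prep1} with $k=1$ to obtain $|\varphi_y\ast u(x)-u_{I(x,y)}|\leq C(1)\Vert u\Vert_*$ and then exponentiate to get the two-sided multiplicative bound. The only difference is that you spell out two steps the paper leaves implicit — using $\int_{\mathbb R}\varphi_y(x-t)\,dt=1$ to pull $u_{I(x,y)}$ inside the convolution, and noting that $|e^w|=e^{{\rm Re}\,w}$ so only the real part of the exponent matters — which is a welcome clarification but not a different argument.
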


\begin{proof}
We apply Proposition \ref{prep1} for $k=1$. Then, by setting $C_1=C(1)$, we see that
$$
|\varphi_y\ast u(x)-u_{I(x,y)}| \leq \int_{\mathbb{R}}|\varphi_y(x-t)||u(t)-u_{I(x,y)}|dt \leq C_1 \Vert u \Vert_*.
$$
This implies that
\begin{equation*}\label{ratio}
e^{-C_1\Vert u \Vert_*} \leq \left| e^{\varphi_y\ast u(x)-u_{I(x,y)}}\right| \leq e^{C_1\Vert u \Vert_*}, 
\end{equation*}
from which we have $\left|e^{\varphi_y\ast u(x)-u_{I(x,y)}}\right| \asymp 1$, that is, 
$|e^{\varphi_y \ast u(x)}| \asymp |e^{u_{I(x,y)}}|$.
\end{proof}

\begin{lemma}\label{semmes}
Let $u$ and $\varphi$ be 
complex-valued functions on $\mathbb R$ 
such that $u \in \rm BMO(\mathbb R)$, $|\varphi(x)| \leq Ce^{-|x|}$ for some constant $C>0$, 
and $\int_{\mathbb{R}}\varphi(x)dx = 1$. 
If $\Vert u \Vert_*$ is sufficiently small, then
\begin{equation*}
|e^{\varphi_y \ast u(x)}| 
\asymp |(\varphi_y\ast e^{u})(x)|
\end{equation*}
is satisfied.
\end{lemma}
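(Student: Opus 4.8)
The plan is to reduce the claimed comparison to the already-established Proposition \ref{Le}. Since that proposition gives $|e^{\varphi_y \ast u(x)}| \asymp |e^{u_{I(x,y)}}|$, it suffices to prove the matching estimate $|(\varphi_y \ast e^{u})(x)| \asymp |e^{u_{I(x,y)}}|$ and then chain the two comparabilities. To this end I factor the constant $e^{u_{I(x,y)}}$ out of the convolution: writing $e^{u(t)} = e^{u_{I(x,y)}}\, e^{u(t) - u_{I(x,y)}}$ gives
\begin{equation*}
\frac{(\varphi_y \ast e^{u})(x)}{e^{u_{I(x,y)}}} = \int_{\mathbb R} \varphi_y(x-t)\, e^{u(t) - u_{I(x,y)}}\, dt,
\end{equation*}
so the whole lemma comes down to showing that this integral has modulus $\asymp 1$ once $\Vert u \Vert_*$ is small. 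The upper bound is immediate, since $|e^{u(t)-u_{I(x,y)}}| \le e^{|u(t)-u_{I(x,y)}|}$, so that Proposition \ref{prep2} (applicable because $\Vert u \Vert_* < C_{JN}$) bounds the integral by a constant depending only on $\Vert u \Vert_*$.

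The lower bound is the heart of the argument and the step I expect to be the main obstacle, because the integrand is complex-valued and cancellation could in principle make the integral small. The idea is to compare the integral with its value for constant $u$, which is exactly $1$. Using $\int_{\mathbb R} \varphi_y(x-t)\, dt = \int_{\mathbb R}\varphi = 1$, I write
\begin{equation*}
\int_{\mathbb R} \varphi_y(x-t)\, e^{u(t) - u_{I(x,y)}}\, dt = 1 + \int_{\mathbb R} \varphi_y(x-t)\left( e^{u(t) - u_{I(x,y)}} - 1\right) dt,
\end{equation*}
and I estimate the error term via the elementary inequality $|e^{z}-1| \le |z|\,e^{|z|}$ with $z = u(t)-u_{I(x,y)}$. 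Applying the Cauchy--Schwarz inequality against the positive measure $|\varphi_y(x-t)|\,dt$ splits the error into the two factors $\bigl(\int_{\mathbb R} |\varphi_y(x-t)|\,|u(t)-u_{I(x,y)}|^2\, dt\bigr)^{1/2}$ and $\bigl(\int_{\mathbb R} |\varphi_y(x-t)|\, e^{2|u(t)-u_{I(x,y)}|}\, dt\bigr)^{1/2}$.

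The first factor is $\lesssim \Vert u \Vert_*$ by Proposition \ref{prep1} with $k=2$; the second is bounded in terms of $\Vert u \Vert_*$ by applying Proposition \ref{prep2} to $2u$, which is legitimate provided $2\Vert u \Vert_* < C_{JN}$, and this holds once $\Vert u \Vert_*$ is small. Hence the error term is $\lesssim \Vert u \Vert_*$ and tends to $0$ as $\Vert u \Vert_* \to 0$, uniformly in $x$ and $y$. Choosing $\Vert u \Vert_*$ small enough that the error has modulus at most $1/2$ forces the integral to lie between $1/2$ and $3/2$ in modulus, which is precisely the comparability $\asymp 1$. Combining $|(\varphi_y \ast e^{u})(x)| \asymp |e^{u_{I(x,y)}}|$ with Proposition \ref{Le} then yields $|e^{\varphi_y \ast u(x)}| \asymp |(\varphi_y\ast e^{u})(x)|$, as claimed.
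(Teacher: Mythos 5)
Your proposal is correct and follows essentially the same route as the paper: reduce via Proposition \ref{Le} to comparing $|(\varphi_y\ast e^u)(x)|$ with $|e^{u_{I(x,y)}}|$, write the ratio minus $1$ as $\varphi_y\ast(e^{u(\cdot)-u_{I(x,y)}}-1)(x)$ using $\int\varphi=1$, and bound it by $\lesssim \Vert u\Vert_*$ through $|e^z-1|\le|z|e^{|z|}$, Cauchy--Schwarz, Proposition \ref{prep1} with $k=2$, and Proposition \ref{prep2} applied to $2u$. The paper's proof is exactly this error-term estimate, so no further comment is needed.
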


\begin{proof}
By Proposition \ref{Le}, it suffices to show that
$|e^{u_{I(x,y)}}| \asymp |(\varphi_y\ast e^{u})(x)|$.
We will estimate
\begin{equation*}\label{minus1}
\left|\frac{(\varphi_y\ast e^{u})(x)}{e^{u_{I(x,y)}}}-1 \right|=
|\varphi_y\ast (e^{u(\cdot)-u_{I(x,y)}}-1)(x)|
\end{equation*}
from above when $\Vert u \Vert_*<C_{JN}/2$. 
This can be done by
\begin{equation}\label{correct}
\begin{split}
&\quad|\varphi_y\ast (e^{u(\cdot)-u_{I(x,y)}}-1)(x)|\\
&\leq \int_{\mathbb R} |\varphi_y(x-t)||e^{u(t)-u_{I(x,y)}}-1|dt \\
&\leq \int_{\mathbb R} \left| \varphi_y (x-t)\right|
|u(t)-u_{I(x,y)}|e^{|u(t)-u_{I(x,y)}|} dt\\
&\leq \left(\int_{\mathbb R} |\varphi_y (x-t)|
|u(t)-u_{I(x,y)}|^2dt \right)^{1/2}\left(\int_{\mathbb R} |\varphi_y (x-t)|e^{2|u(t)-u_{I(x,y)}|}dt \right)^{1/2}, 
\end{split}
\end{equation}
where we have used an inequality $|e^z-1| \leq |z|e^{|z|}$.

Here, Proposition \ref{prep1} implies that the first factor of the last line in (\ref{correct}) is bounded by 
a multiple of $\Vert u \Vert_*$, and Proposition \ref{prep2} implies that the second factor is bounded if 
$\Vert 2u \Vert_*<C_{JN}$.
Therefore, we have
$$
\left|\frac{(\varphi_y\ast e^{u})(x)}{e^{u_{I(x,y)}}}-1 \right| \lesssim \Vert u \Vert_*
$$
in this case, and thus  
$|e^{\varphi_y \ast u(x)}| 
\asymp |(\varphi_y\ast e^{u})(x)|$ when $\Vert u \Vert_*$ is sufficiently small.
\end{proof}

From these two claims, we see that the supremum norm of the complex dilatation is dominated by the BMO norm. In the following Proposition \ref{mu<BMO}, we only consider the upper half-plane case. The lower half-plane case can be treated similarly. 

\begin{proposition}\label{mu<BMO}
For a complex-valued function $u$ on $\mathbb R$ with $u \in \rm BMO(\mathbb R)$,
let $\mu_u$ be the complex dilatation of
the map $F_u$ on $\mathbb U$ induced by $u$ as above. If $\Vert u \Vert_*$ is sufficiently small,
then $\Vert \mu_u \Vert_\infty \lesssim \Vert u \Vert_*$.
\end{proposition}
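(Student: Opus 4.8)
The plan is to exploit the two convolution representations in (\ref{alphabeta}), namely $F_{\bar z}=e^u \ast \alpha_y(x)$ and $F_z=e^u \ast \beta_y(x)$, together with the crucial distinction that $\int_{\mathbb R}\alpha(x)\,dx=0$ whereas $\int_{\mathbb R}\beta(x)\,dx=1$. The vanishing mean of $\alpha$ is what forces the numerator to carry a factor of $\Vert u \Vert_*$, while the normalization of $\beta$ keeps the denominator comparable to $|e^{u_{I(x,y)}}|$; dividing then yields $|\mu_u(x,y)| \lesssim \Vert u \Vert_*$ uniformly in $(x,y) \in \mathbb U$.

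First I would bound the numerator. Since $\int_{\mathbb R}\alpha_y(x-t)\,dt=0$, I may subtract the constant $e^{u_{I(x,y)}}$ inside the convolution and write
$$
F_{\bar z}(x,y)=e^{u_{I(x,y)}}\int_{\mathbb R}\alpha_y(x-t)\left(e^{u(t)-u_{I(x,y)}}-1\right)dt.
$$
Estimating exactly as in (\ref{correct}) --- using $|e^z-1|\le |z|e^{|z|}$ followed by the Cauchy--Schwarz inequality --- the remaining integral is at most
$$
\left(\int_{\mathbb R}|\alpha_y(x-t)|\,|u(t)-u_{I(x,y)}|^2\,dt\right)^{1/2}\left(\int_{\mathbb R}|\alpha_y(x-t)|\,e^{2|u(t)-u_{I(x,y)}|}\,dt\right)^{1/2}.
$$
Proposition \ref{prep1} with $k=2$ (admissible since $|\alpha(x)|\le Ce^{-|x|}$) bounds the first factor by a constant multiple of $\Vert u \Vert_*$, and Proposition \ref{prep2} applied to $2u$ bounds the second factor, provided $\Vert 2u \Vert_* < C_{JN}$, i.e. $\Vert u \Vert_* < C_{JN}/2$. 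Hence $|F_{\bar z}(x,y)| \lesssim |e^{u_{I(x,y)}}|\,\Vert u \Vert_*$.

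Next I would bound the denominator from below. Because $\beta$ satisfies $\int_{\mathbb R}\beta(x)\,dx=1$ and $|\beta(x)| \le Ce^{-|x|}$, Lemma \ref{semmes} gives $|F_z(x,y)|=|(e^u \ast \beta_y)(x)| \asymp |e^{(\beta_y \ast u)(x)}|$ for small $\Vert u \Vert_*$, and Proposition \ref{Le} gives $|e^{(\beta_y \ast u)(x)}| \asymp |e^{u_{I(x,y)}}|$. Combining these, $|F_z(x,y)| \gtrsim |e^{u_{I(x,y)}}|$. Dividing the numerator bound by the denominator bound, the factor $|e^{u_{I(x,y)}}|$ cancels and I obtain $|\mu_u(x,y)|=|F_{\bar z}|/|F_z| \lesssim \Vert u \Vert_*$, uniformly in $(x,y)$, which is the assertion.

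The main obstacle --- really the only delicate point --- is the cancellation coming from $\int \alpha=0$: without first subtracting the constant $e^{u_{I(x,y)}}$, one could only bound $|F_{\bar z}|$ by a constant times $|e^{u_{I(x,y)}}|$, not by something proportional to $\Vert u \Vert_*$, and the conclusion $\Vert \mu_u \Vert_\infty \lesssim \Vert u \Vert_*$ (rather than merely $\Vert \mu_u \Vert_\infty \lesssim 1$) would fail. The remaining care is purely bookkeeping: checking that a single smallness threshold, such as $\Vert u \Vert_* < C_{JN}/2$, simultaneously validates Proposition \ref{prep2} for $2u$ and the hypothesis of Lemma \ref{semmes}, which fixes the precise meaning of ``sufficiently small'' in the statement.
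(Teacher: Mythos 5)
Your proposal is correct and follows essentially the same route as the paper: the paper likewise uses Lemma \ref{semmes} and Proposition \ref{Le} to replace the denominator $|\beta_y \ast e^u|$ by $|e^{u_{I(x,y)}}|$, reducing the problem to $|\alpha_y \ast e^{u-u_{I(x,y)}}(x)|$, and then bounds this exactly via the cancellation $\int_{\mathbb R}\alpha\,dx=0$, the inequality $|e^z-1|\leq|z|e^{|z|}$, and the Cauchy--Schwarz estimate of (\ref{correct}) resting on Propositions \ref{prep1} and \ref{prep2}. The only cosmetic difference is that you estimate numerator and denominator separately and then divide, whereas the paper first forms the quotient and then estimates; the ingredients and thresholds are identical.
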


\begin{proof}
It follows from Proposition \ref{Le} and Lemma \ref{semmes} that 
\begin{equation*}\label{mu}
|\mu_u(x,y)| = \frac{|\alpha_y\ast e^u(x)|}{| \beta_y\ast e^u(x)|}
\asymp  \frac{|\alpha_y\ast e^u(x)|}{| e^{\beta_y\ast u(x)}|}\\
\asymp  \frac{|\alpha_y\ast e^u(x)|}{| e^{u_{I(x,y)}}|}\\
=|\alpha_y\ast e^{u - u_{I(x,y)}}(x)|.
\end{equation*}
Then, by $\int_{\mathbb{R}}\alpha(x)dx = 0$, which implies $\int_{\mathbb{R}}\alpha_y(x)dx = 0$,  
and by $|e^z - 1| \leq |z|e^{|z|}$, 
we have 
\begin{equation}\label{mu}
\begin{split}
|\mu_u(x,y)| &\asymp |\alpha_y\ast (e^{u - u_{I(x,y)}} - 1)(x)|\\
&\leq\int_{\mathbb{R}}|\alpha_y(x-t)| |u(t) - u_{I(x,y)}| e^{|u(t) - u_{I(x,y)}|} dt.
\end{split}
\end{equation}
By (\ref{correct}) applying to $\alpha=\varphi$, the last line of (\ref{mu})
is bounded by some multiple of $\Vert u \Vert_*$.
This implies that $\Vert \mu_u \Vert_\infty \lesssim \Vert u \Vert_*$.
\end{proof}

Finally, aiming at the goal of this section,
we prove a certain approximation of BMO functions.
This was stated in \cite[p.251]{Se} without proof in order to prove that $F_u$ is quasiconformal
in its situation.

\begin{lemma}\label{approximation}
For any $u \in {\rm BMO}(\mathbb R)$,
there exist a sequence $\{u_j\} \subset {\rm BMO}(\mathbb R)$ and constants $a \in \mathbb C$ and $C>0$ such that 
$u_j-a$ is continuous and compactly supported with
$\Vert u_j \Vert_\ast \leq C \Vert u \Vert_\ast$ for all $j \in \mathbb N$ and 
$u_j$ converges to $u$ locally 
in $L^p$ for $1 \leq p <\infty$, i.e., $u_j \to u$ in $L^p(I)$ for any bounded interval $I \subset \mathbb R$.
Moreover, if $\Vert u \Vert_\ast$ is sufficiently small in addition, then
$e^{u_j}$ converges to
$e^u$ locally in $L^1$.
\end{lemma}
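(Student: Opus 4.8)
The plan is to build $u_j$ in two stages: mollify $u$ to gain continuity and local $L^p$ approximation, then multiply by a cutoff that is \emph{linear in the logarithmic variable} $\log|x|$ so as to force the function down to a single constant near infinity while keeping its oscillation under control. Fix $a = u_{I(0,1)} \in \mathbb C$, a mollifier $\rho \ge 0$ with $\int_{\mathbb R}\rho\,dx = 1$ supported in $[-1,1]$, and $\epsilon_j \downarrow 0$. Put $w_j = u \ast \rho_{\epsilon_j}$; this is smooth, satisfies $\Vert w_j\Vert_\ast \le \Vert u\Vert_\ast$ (convolution with a probability density does not increase the BMO norm), and $w_j \to u$ in $L^p(I)$ for every bounded $I$ and every $1 \le p < \infty$ (recall $u \in L^p_{\rm loc}$ by John--Nirenberg). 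Let $\eta_j:\mathbb R \to [0,1]$ be the even cutoff equal to $1$ on $[-j,j]$, equal to $0$ off $[-j^2, j^2]$, and equal to $2 - \log|x|/\log j$ for $j \le |x| \le j^2$ (rounded smoothly at the corners), so that $\eta_j$ is Lipschitz in $\log|x|$ with constant $1/\log j$. Set $u_j = a + \eta_j\,(w_j - a)$. Then $u_j - a = \eta_j(w_j - a)$ is continuous and supported in $[-j^2, j^2]$, so $u_j \in L^\infty \subset {\rm BMO}(\mathbb R)$; and once $I \subset [-j,j]$ we have $u_j = w_j$ on $I$, whence $u_j \to u$ locally in $L^p$.

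The crux is the uniform estimate $\Vert u_j\Vert_\ast \le C\Vert u\Vert_\ast$ with $C$ independent of $u$ and $j$. Testing $g = u_j - a = \eta_j(w_j - a)$ on an interval $J$, I would use $g(x) - g(x') = \eta_j(x)(w_j(x) - w_j(x')) + (\eta_j(x) - \eta_j(x'))(w_j(x') - a)$ and split into two regimes. If $J$ lies at a scale $|x_J|$ with $|J| \lesssim |x_J|$, then $\eta_j$ varies by $O(1/\log j)$ across $J$; subtracting $c = \eta_j(x_J)((w_j)_J - a)$, the first term gives $\lesssim \Vert w_j\Vert_\ast$ and the second is $\lesssim (1/\log j)\,|(w_j)_J - a|$, where the logarithmic growth $|(w_j)_J - a| \lesssim \Vert w_j\Vert_\ast \log j$ furnished by $(\ref{n+0})$ (the estimate $|u_{I_n} - u_{I_0}| \le 2n\Vert u\Vert_\ast$) is exactly cancelled by the factor $1/\log j$. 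If instead $J$ is comparable to a symmetric interval $(-L, L)$ about the origin, I subtract a scale-adapted constant and again play the logarithmic growth of the averages $(w_j)_{I(0,L)}$ against the slow variation of $\eta_j$; here the linear decay of $\eta_j$ in $\log|x|$ produces a cancellation (for instance $\frac{1}{L}\int_0^L|\eta_j(x) - \eta_j(L)|\,dx \le 1/\log j$ when $L \le j^2$, and a summation $\sum_m (1 - m/\log_2 j)\,2^m \lesssim j/\log j$ over dyadic shells when $L \ge j^2$) that removes the apparent $\log j$ and leaves the average $\lesssim \Vert w_j\Vert_\ast$. This matching of the log-linear cutoff against the logarithmic growth of the BMO means is the whole point and the main obstacle; a cutoff concentrated on one dyadic scale would force the norm to grow like $\log j$ and fail.

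Finally, suppose $\Vert u\Vert_\ast$ is small enough that $C\Vert u\Vert_\ast < C_{JN}$, hence $\Vert u_j\Vert_\ast < C_{JN}$ uniformly in $j$. Fix a bounded interval $I$. Since $(u_j)_I \to u_I$ stays bounded, the John--Nirenberg computation in $(\ref{basicJN})$, applied with exponent $1 + \delta$ for small $\delta > 0$ (admissible once $(1+\delta)\Vert u_j\Vert_\ast < C_{JN}$), yields $\sup_j \int_I |e^{u_j}|^{1+\delta}\,dx < \infty$, so the family $\{e^{u_j}\}$ is uniformly integrable on $I$. As $u_j \to u$ in $L^1(I)$, every subsequence admits a further subsequence converging a.e., along which $e^{u_j} \to e^u$ a.e.; Vitali's theorem then gives $e^{u_j} \to e^u$ in $L^1(I)$, and independence of the limit upgrades this to convergence of the whole sequence. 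Once the uniform BMO bound is in hand, this last step is routine, the exponential integrability being supplied by John--Nirenberg exactly as in $(\ref{basicJN})$ and Proposition \ref{prep2}.
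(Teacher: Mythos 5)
Your proposal is correct, but it follows a genuinely different route from the paper's proof. The paper never cuts off $u$ itself: it invokes the Fefferman decomposition $u=\varphi+H(\psi)+a$ with $\varphi,\psi\in L^\infty(\mathbb R)$ and $\Vert\varphi\Vert_\infty,\Vert\psi\Vert_\infty\lesssim\Vert u\Vert_*$, splits the Hilbert transform $H$ into a local part $H'$ and a bounded tail $b$, and sets $u_j=\eta_{1/j}\ast\left(\varphi 1_{[-j,j]}+b(\psi)1_{[-j,j]}+H'(\psi 1_{[-j,j]})\right)+a$; the uniform bound $\Vert u_j\Vert_*\le C\Vert u\Vert_*$ then comes from the $L^\infty\to{\rm BMO}$ boundedness of $H$ and the commutation of $H$ with mollifiers, because sharp truncation is applied only to bounded functions, where it is harmless. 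You instead truncate directly with a logarithmically tapered cutoff and prove the uniform BMO bound by hand, playing the Lipschitz-in-$\log|x|$ constant $1/\log j$ of $\eta_j$ against the logarithmic growth of averages recorded in (\ref{n+0}); your key computations check out ($\frac 1L\int_0^L|\eta_j-\eta_j(L)|\,dx=(L-j)/(L\log j)\le 1/\log j$ for $j\le L\le j^2$, and the dyadic-shell sum for $L\ge j^2$ indeed cancels the $\log j$), and general intervals reduce to your two regimes by the standard doubling argument. One spot to tighten: in the first regime, pairing the crude statement that $\eta_j$ varies by $O(1/\log j)$ across $J$ with $|(w_j)_J-a|\lesssim\Vert w_j\Vert_*\log j$ is not sufficient when $|J|\ll|x_J|$ (with $x_J$ the center of $J$), since then $|(w_j)_J-a|$ carries an extra $\log(|x_J|/|J|)$ beyond $\log j$; you need the sharper bound $|\eta_j(x)-\eta_j(x')|\lesssim|x-x'|/(|x_J|\log j)$, which your Lipschitz-in-$\log$ construction does provide, so that the product is controlled by $t\log(1/t)/\log j\cdot\Vert u\Vert_*$ with $t=|J|/|x_J|$, hence bounded. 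As for what each approach buys: yours is elementary and self-contained (only John--Nirenberg and (\ref{n+0}); no duality theorem, no Hilbert transform, no commutation lemma, and explicit constants), and it makes transparent why a sharp one-scale cutoff would force the norm to grow like $\log j$; the paper's proof outsources the delicate work to known theorems, so the BMO bookkeeping is short, and it yields smooth $u_j-a$ for free. Your final step (uniform $L^{1+\delta}(I)$ bounds on $e^{u_j}$ from John--Nirenberg as in (\ref{basicJN}), then a.e.\ convergence along subsequences plus Vitali) is a clean alternative to the paper's three-factor H\"older estimate; both rest on exactly the same exponential integrability.
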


\begin{proof}
Any function $u \in {\rm BMO}(\mathbb R)$ can be written as
$u=\varphi+ H(\psi)+a$, where $\varphi, \psi \in L^\infty(\mathbb R)$ satisfy
$\Vert \varphi \Vert_\infty \lesssim \Vert u \Vert_\ast$, $\Vert \psi \Vert_\infty \lesssim \Vert u \Vert_\ast$,
$a$ is a constant, and $H$ is the Hilbert transformation for $L^\infty(\mathbb R)$ defined by
$$
H(\psi)(x)=\lim_{\varepsilon \to 0} \frac{1}{\pi} \int_{|x-t|>\varepsilon} \left(\frac{1}{x-t}+\frac{t}{1+t^2}\right) \psi(t)dt.
$$
See \cite[Corollary VI.4.5]{Ga}. 
Moreover, let
$$
b(\psi)(x)=\frac{1}{\pi} \int_{|x-t|\geq 1} \left(\frac{1}{x-t}+\frac{t}{1+t^2}\right) \psi(t)dt,
$$
which satisfies $b(\psi) \in L^\infty(\mathbb R)$ with $\Vert b(\psi) \Vert_\infty \lesssim \Vert \psi \Vert_\infty$. Then,
$
u(x)=\varphi(x) +b(\psi)(x)+H'(\psi)(x)+a
$ 
for
$$
H'(\psi)(x)=\lim_{\varepsilon \to 0} \frac{1}{\pi} \int_{1>|x-t|>\varepsilon} \left(\frac{1}{x-t}+\frac{t}{1+t^2}\right) \psi(t)dt.
$$

We define a function 
\begin{equation}\label{mollifier}
\eta(x)=
\begin{cases}
c \exp (\frac{1}{x^2-1}) & (-1<x<1)\\ 
0 & {\rm otherwise},  
\end{cases}
\end{equation}
where
$c$ is a constant satisfying $\int_{\mathbb R} \eta(x)dx=1$. Then, we consider the mollifier
$\eta_{1/j}(x)=j \eta(jx)$ for every $j \in \mathbb N$, and define
\begin{equation*}
\begin{split}
u_j(x)&=\eta_{1/j} \ast \left(\varphi1_{[-j,j]}+b(\psi)1_{[-j,j]}+H'(\psi1_{[-j,j]}) \right)(x)+a\\
&=\eta_{1/j} \ast \left(\varphi1_{[-j,j]}+b(\psi)1_{[-j,j]}-b(\psi1_{[-j,j]})+H(\psi1_{[-j,j]}) \right)(x)+a\\
&=\eta_{1/j} \ast \left(\varphi1_{[-j,j]}+b(\psi)1_{[-j,j]}-b(\psi1_{[-j,j]})\right)(x)+H(\eta_{1/j} \ast(\psi 1_{[-j,j]}))(x)+a.
\end{split}
\end{equation*}
For the third equality above, we used a fact that the convolution by the mollifier and the Hilbert transformation commute
(see \cite[Lemma 6]{To}).
Then, each $u_j-a$ is smooth and compactly supported.

By a property of the mollifier, we have
\begin{equation*}
\begin{split}
\Vert \eta_{1/j} \ast (\varphi1_{[-j,j]}) \Vert_* &\leq \Vert \eta_{1/j} \ast (\varphi1_{[-j,j]}) \Vert_\infty \leq \Vert \varphi \Vert_\infty \lesssim \Vert u \Vert_\ast;\\
\Vert \eta_{1/j} \ast (b(\psi)1_{[-j,j]}) \Vert_* &\leq \Vert \eta_{1/j} \ast (b(\psi)1_{[-j,j]}) \Vert_\infty \leq \Vert b(\psi) 1_{[-j,j]}\Vert_\infty \lesssim \Vert \psi \Vert_\infty 
\lesssim \Vert u \Vert_\ast;\\
\Vert \eta_{1/j} \ast (b(\psi 1_{[-j,j]})) \Vert_* &\leq \Vert \eta_{1/j} \ast (b(\psi 1_{[-j,j]})) \Vert_\infty \leq \Vert b(\psi 1_{[-j,j]}) \Vert_\infty \lesssim \Vert \psi \Vert_\infty 
\lesssim \Vert u \Vert_\ast.
\end{split}
\end{equation*}
Moreover, for the Hilbert transformation $H$, we know that
$\Vert H(\psi) \Vert_* \lesssim \Vert \psi \Vert_\infty$ (see \cite[Theorem VI.1.5]{Ga}). Hence,
\begin{equation*}
\begin{split}
\Vert H(\eta_{1/j} \ast(\psi1_{[-j,j]})) \Vert_* 
&\lesssim \Vert \eta_{1/j} \ast(\psi1_{[-j,j]}) \Vert_\infty \leq \Vert \psi \Vert_\infty \lesssim \Vert u \Vert_\ast.
\end{split}
\end{equation*}
Thus, we can conclude that there is some constant $C>0$ such that $\Vert u_j \Vert_* \leq C \Vert u \Vert_*$
for all $j \in \mathbb N$.

Let $I \subset \mathbb R$ be an arbitrary bounded interval. Then, for all sufficiently large $j$, 
we have $\varphi1_{[-j,j]}=\varphi$, $b1_{[-j,j]}=b$, and $H'(\psi1_{[-j,j]})=H'(\psi)$ on $I$.
Therefore, by another property of the mollifier, we see that $u_j$ converges to $u$ in $L^p(I)$
for $1 \leq p < \infty$.

Finally, we show that $e^{u_j} \to e^u$ in $L^1(I)$ when $\Vert u \Vert_*$ is sufficiently small.
For simplicity, we may assume that $u_I=0$, for otherwise,
we only have to put the constant $u_I$ at appropriate places. Since $u_j \to u$ in $L^1(I)$, we have $(u_j)_I \to 0$
as $j \to \infty$. Hence, $|(u_j)_I| \leq \delta$ for some $\delta>0$.
As before, we have
\begin{equation*}
\begin{split}
&\quad\ \frac{1}{|I|}\int_I \left| e^{u(t)}-e^{u_j(t)} \right| dt 
\leq \frac{1}{|I|}\int_I e^{|u(t)|} e^{|u(t)-u_j(t)|}|u(t)-u_j(t)|dt\\
&\leq \frac{e^\delta}{|I|} \int_I e^{2|u(t)-u_I|}e^{|u_j(t)-(u_j)_I|}|u(t)-u_j(t)|dt\\
&\leq e^\delta \left(\frac{1} {|I|} \int_I e^{4|u(t)-u_I|}dt \right)^{1/2}
\left(\frac{1} {|I|} \int_I e^{4|u_j(t)-(u_j)_I|}dt \right)^{1/4}\left(\frac{1} {|I|} \int_I |u(t)-u_j(t)|^4dt \right)^{1/4}.
\end{split}
\end{equation*}
By the John--Nirenberg inequality as used in (\ref{basicJN}), 
the first integral factor in the last line is bounded in terms of $\Vert u \Vert_*$ 
and the second integral factor is bounded in terms of $\Vert u_j \Vert_* \leq C\Vert u \Vert_*$ if $\Vert u \Vert_*$
is sufficiently small. Since $u_j \to u$ in $L^p(I)$, this proves that $e^{u_j} \to e^u$ in $L^1(I)$.
\end{proof}

After these preparations, we obtain the following theorem on
the variant of the Beurling--Ahlfors extension $F_u$ by the heat kernel for BMO functions $u$. 
This asserts that $F_u$ is quasiconformal if $\Vert u \Vert_\ast$ is small.
Combining this with Theorem \ref{FKP}, we see that $F_u$ is quasiconformal if $u$ is either
real-valued with $e^u \in A_\infty$ or of small BMO norm. In the next section, we consider this extension
in the case where $u$ is in the $p$-Besov space.

\begin{theorem}\label{smallcase}
The map $F_u$ extends continuously to
a quasiconformal homeomorphism of $\mathbb C$ onto itself with $F_u|_{\mathbb R}=\gamma_u$
if $\Vert u \Vert_\ast$ is sufficiently small.
\end{theorem}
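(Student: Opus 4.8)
The plan is to combine the uniform dilatation bound already at hand with an approximation argument in the spirit of Semmes. By Proposition \ref{mu<BMO} applied on $\mathbb U$ and, symmetrically, on $\mathbb L$, if $\Vert u \Vert_*$ is small then $\Vert \mu_u \Vert_\infty \lesssim \Vert u \Vert_* < 1$; in particular $F_z = e^u \ast \beta_y$ never vanishes (by Lemma \ref{semmes} together with Proposition \ref{Le} one has $|F_z| \asymp |e^{u_{I(x,y)}}| > 0$), so $F_u$ is a $C^\infty$ local diffeomorphism with positive Jacobian on each open half-plane, with a maximal dilatation $K$ depending only on a bound for $\Vert u \Vert_*$. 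What remains is the global statement: that these two local diffeomorphisms glue along $\mathbb R$ into a single homeomorphism of $\mathbb C$.

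First I would record the boundary behaviour. Since $\phi(x) = \pi^{-1/2} e^{-x^2}$ is an approximate identity ($\int_{\mathbb R}\phi = 1$) and $\psi = \phi'$ has mean zero, continuity of $\gamma = \gamma_u$ gives $\gamma \ast \phi_y \to \gamma$ and $\gamma \ast \psi_y \to 0$ as $y \to 0$, so $F_u$ extends continuously to $\overline{\mathbb U}$ and to $\overline{\mathbb L}$ with $F_u|_{\mathbb R} = \gamma_u$ from both sides. Hence the two half-plane maps glue to a continuous map $F_u : \mathbb C \to \mathbb C$ carrying $\mathbb R$ onto the curve $\gamma_u$.

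Next I would prove the theorem for nice approximants and then pass to the limit. Take the sequence $u_j$ from Lemma \ref{approximation}: $u_j - a$ is continuous and compactly supported, $\Vert u_j \Vert_* \le C\Vert u \Vert_*$ (still small, so each $F_{u_j}$ is $K$-quasiconformal with the \emph{same} $K$), and $e^{u_j} \to e^u$ locally in $L^1$. For fixed $j$, $e^{u_j} \equiv e^a$ off a compact interval, so $\gamma_{u_j}$ is affine near $\pm\infty$; a direct computation (the case $u \equiv a$ gives $F_a(z) = \gamma(0) + e^a z$) shows that $F_{u_j}$ is asymptotic to an affine map $z \mapsto e^a z + c_j$ as $z \to \infty$, hence is proper and extends to a homeomorphism of $\widehat{\mathbb C}$ fixing $\infty$. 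Being in addition a local homeomorphism across $\mathbb R$ — here $\gamma_{u_j}$ is a regular $C^1$ curve and the sense-preserving extension sends the two sides of $\mathbb R$ to opposite sides of $\gamma_{u_j}$ — the proper local homeomorphism $F_{u_j} : \mathbb C \to \mathbb C$ is a covering map onto $\mathbb C$, hence, $\mathbb C$ being simply connected, a $K$-quasiconformal homeomorphism.

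Finally, $e^{u_j} \to e^u$ in $L^1_{\mathrm{loc}}$ forces $\gamma_{u_j} \to \gamma_u$, and therefore $F_{u_j} \to F_u$ locally uniformly on $\mathbb C$. Normalizing by the affine maps $A_j(w) = (w - \gamma_{u_j}(0))/(\gamma_{u_j}(1) - \gamma_{u_j}(0))$, which converge to an affine map $A$ since $\gamma_{u_j}(0) = \gamma(0)$ and $\gamma_{u_j}(1) \to \gamma_u(1) \neq \gamma(0)$, the maps $A_j \circ F_{u_j}$ are $K$-quasiconformal self-homeomorphisms of $\widehat{\mathbb C}$ fixing $0, 1, \infty$, and these form a compact family by the standard convergence theorem for quasiconformal mappings. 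Its limit $A \circ F_u$ is then a $K$-quasiconformal homeomorphism of $\widehat{\mathbb C}$, whence $F_u$ is a quasiconformal homeomorphism of $\mathbb C$ onto itself with $F_u|_{\mathbb R} = \gamma_u$. I expect the main obstacle to be precisely this last passage: the dilatation bound and the boundary values are comparatively routine, whereas upgrading ``local diffeomorphism on each half with small dilatation'' to a genuine \emph{global} homeomorphism of $\mathbb C$ requires the approximation by asymptotically affine maps together with the normal-family compactness in order to control global injectivity, surjectivity, and non-degeneracy of the limit.
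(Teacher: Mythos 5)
Your proposal is correct and follows essentially the same route as the paper's own proof: small BMO norm gives $\Vert \mu_u \Vert_\infty < 1$ via Proposition \ref{mu<BMO}, the compactly supported approximants from Lemma \ref{approximation} yield maps that are asymptotically affine at infinity and are upgraded to global quasiconformal homeomorphisms by a topological argument, and then locally uniform convergence $F_{u_j} \to F_u$ together with the compactness/convergence theorem for $K$-quasiconformal maps identifies $F_u$ as a quasiconformal homeomorphism of $\mathbb C$. The only differences are cosmetic: you spell out the paper's ``topological argument'' as proper $+$ local homeomorphism $\Rightarrow$ covering $\Rightarrow$ homeomorphism, and you make the three-point normalization in the limiting step explicit.
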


\begin{proof}
By Proposition \ref{mu<BMO}, we may assume that $\Vert \mu_u \Vert_\infty$ is also sufficiently small, 
and in particular $\Vert \mu_u \Vert_\infty<1$.
By the property of heat kernel, we see that 
$U(x,y) \to \gamma_u(x)$ and $V(x,y) \to 0$ as $y \to 0$. This shows that $F=F_u$ extends continuously 
to $\gamma_u$ on $\mathbb R$, and then $F$ is continuous on $\mathbb C$. 

Suppose first that $u-a$ is continuous and has a compact support for some $a \in \mathbb C$. 
Then, $F_{\bar z}$ and $F_z$ are continuous off $\mathbb R$. Noting that 
$F_{\bar z} = e^u\ast\alpha_y(x)$, $F_z = e^u\ast\beta_y(x)$ and $\int_{\mathbb R} \alpha(x) dx = 0$, $\int_{\mathbb R} \beta (x) dx = 1$, we conclude by the Lebesgue dominated convergence theorem that $F_{\bar z}(x,y) \to 0$ and $F_z(x,y) \to \gamma'(x)$ as $y \to 0$. Thus, $F$ is continuously differentiable on $\mathbb C$. 
Since $u-a$ has a compact support, $\gamma(x) = e^a x + O(1)$ at $\infty$, and then $F(z) = e^a z + O(1)$ at $\infty$, which implies that $F(z) \to \infty$ as $z \to \infty$. Moreover, since $|F_z(x, y)| \asymp |e^{\beta_y\ast u(x)}|$  by Lemma \ref{semmes}, which is never $0$, 
the Jacobian determinant of $F$ is positive everywhere. This implies that
$F$ is locally homeomorphic on $\mathbb{U}$. 
The same is true for the map $F$ defined on $\mathbb L$.
Then, a topological argument deduces that $F$ extends continuously to an orientation-preserving global homeomorphism of 
$\mathbb C$ onto itself. 
By $\Vert \mu_u \Vert_\infty<1$, we see that $F$ is quasiconformal. 

Consider now the general case. Given $u \in \rm BMO(\mathbb R)$ with a small norm, 
Lemma \ref{approximation} implies that
there is a continuous and compactly supported sequence $\{u_j-a\} \subset {\rm BMO}(\mathbb R)$ such that 
$\Vert u_j \Vert_* \leq C \Vert u \Vert_*$, $u_j \to u$ locally in $L^1$, and $e^{u_j} \to e^{u}$ locally in $L^1$.
We assume that $\gamma_j(0) = \gamma(0)$ for all $j$.
The corresponding $F_{u_j}$ for $u_j$ is a quasiconformal homeomorphism of $\mathbb C$ by the above arguments.
Since $\gamma_{u_j} \to \gamma$ uniformly on compact sets by 
the condition that $e^{u_j} \to e^{u}$ locally in $L^1$, so does $F_{u_j} \to F$.
Since $\Vert u_j \Vert_*$ is also sufficiently small
by $\Vert u_j \Vert_* \leq C \Vert u \Vert_*$, we can regard that all $\Vert \mu_{u_j} \Vert_\infty$ are
uniformly bounded by a constant less than $1$. Then, passing to a subsequence if necessary,
$F_{u_j}$ converges to a quasiconformal homeomorphism of $\mathbb C$, which coincides with
$F_u$. Thus, we see that $F_u$ is quasiconformal.
\end{proof}

\section{Quasiconformal extension of curves}
In this section, we establish the main result of this paper (Theorem \ref{qcU}, which is the precise version of Theorem \ref{thm5}). The key point is to consider the quasiconformal extension of a curve $\gamma_u:\mathbb R \to \mathbb C$ produced by
$u \in B_p(\mathbb R)$ by applying the variant of the Beurling--Ahlfors extension by the heat kernel. 
Then, several results concerning the $p$-Weil--Petersson Teichm\"uller space follow this as mentioned in Section 1.

Let us start with some notations. Let $M(\mathbb U)$ denote the open unit ball of the Banach space $L^{\infty}(\mathbb U)$
of all essentially bounded measurable functions on $\mathbb U$. An element in $M(\mathbb U)$ is called a
Beltrami coefficient. 
Let $L_0(\mathbb U)$ denote a subspace of $L^{\infty}(\mathbb U)$ consisting 
of all elements $\mu$ vanishing at the boundary, that is,
$$
\lim_{t \to 0}\, {\rm ess.}\!\!\!\!\!\!\!\!\sup_{0<y<t\qquad} \!\!\!\!\!\!\!\! |\mu(x,y)| = 0. 
$$
Then, we define the subset of all Beltrami coefficients vanishing at the boundary by
$M_0(\mathbb U) =  M(\mathbb U) \cap L_0(\mathbb U)$. 
For $\mu \in L^\infty(\mathbb U)$, we define the $p$-integrable norm 
with respect to the hyperbolic metric by
\begin{equation*}
\Vert \mu \Vert_p = \left(\iint_{\mathbb U}|\mu(x,y)|^p \frac{dxdy}{y^2} \right)^{\frac{1}{p}}. 
\end{equation*}
Then, we introduce a new norm $\Vert \mu \Vert_{\infty}+\Vert \mu \Vert_{p}$ for $\mu$.
Let $\mathcal{L}_p(\mathbb U)$ denote a subspace of $L^{\infty}(\mathbb U)$ consisting of all elements $\mu$ with 
$\Vert \mu \Vert_{\infty}+\Vert \mu \Vert_{p}<\infty$, which is a Banach space.
Moreover, we set $\mathcal{M}_p(\mathbb U) =  M(\mathbb U) \cap \mathcal{L}_p(\mathbb U)$. 

As before, $\mu_u$ denotes the complex dilatation of $F_u$
given by (\ref{F}) and (\ref{alphabeta}), but we take
$u \in B_p(\mathbb R)$ in the present setting. The following three claims are concerning
the boundedness of the norms of $\mu_u$.

\begin{proposition}\label{bounded}
For any  $u_0 \in B^{\mathbb R}_p(\mathbb R)$, there are $\delta>0$ and $M>0$ such that
if $\tilde u \in B_p(\mathbb R)$ satisfies
$\Vert \tilde u-u_0 \Vert_{B_p} <\delta$, then $\Vert \mu_{\tilde u} \Vert_{\infty}+\Vert \mu_{\tilde u} \Vert_{p} \leq M$.
\end{proposition}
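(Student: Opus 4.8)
The plan is to cancel a common factor and write $\mu_{\tilde u}=N/D$, then to bound $|N|$ from above (both pointwise and against the measure $dx\,dy/y^2$) and $|D|$ from below pointwise. Fix $(x,y)\in\mathbb U$, set $I=I(x,y)$, and factor $e^{\tilde u_I}$ out of the two convolutions in $\mu_{\tilde u}=(e^{\tilde u}\ast\alpha_y)/(e^{\tilde u}\ast\beta_y)$; using $\int_{\mathbb R}\alpha=0$ and $\int_{\mathbb R}\beta=1$ this gives
\[
\mu_{\tilde u}(x,y)=\frac{N(x,y)}{D(x,y)},\qquad N=\int_{\mathbb R}\alpha_y(x-t)\bigl(e^{\tilde u(t)-\tilde u_I}-1\bigr)\,dt,\qquad D=\int_{\mathbb R}\beta_y(x-t)e^{\tilde u(t)-\tilde u_I}\,dt.
\]
Since $\tilde u_I=(u_0)_I+v_I$ for $v=\tilde u-u_0$, I decompose $\tilde u-\tilde u_I=g+w$ with $g=u_0-(u_0)_I$ real and $w=v-v_I$, whose weighted moments are controlled by $\Vert v\Vert_*\le\Vert v\Vert_{B_p}<\delta$ through Proposition \ref{prep1}.

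The \emph{main obstacle} is the uniform lower bound $|D|\gtrsim 1$: here the small-norm comparison of Lemma \ref{semmes} is unavailable because $\Vert\tilde u\Vert_*$ need not be small, so I must exploit the proximity to the real base function $u_0$. I write $D=D_0+E$ with $D_0=\int_{\mathbb R}\beta_y(x-t)e^{g(t)}\,dt=e^{-(u_0)_I}F_{u_0,z}(x,y)$ and $E=\int_{\mathbb R}\beta_y(x-t)e^{g(t)}(e^{w(t)}-1)\,dt$. For $D_0$ I invoke that $F_{u_0}$ is quasiconformal (Theorem \ref{FKP}): by Proposition \ref{biLipproperty} and (\ref{Fz}) one has $|F_{u_0,z}(x,y)|\asymp(e^{u_0}\ast\phi_y)(x)$, and since $\phi>0$ this convolution is $\gtrsim(e^{u_0})_I\ge e^{(u_0)_I}$ by the Jensen inequality (\ref{Jensen}); hence $|D_0|\ge d_0>0$ with $d_0$ depending only on $u_0$. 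For the error I use $|e^{w}-1|\le|w|e^{|w|}$ and Hölder with exponents $2,4,4$ to split $|E|$ into the product of $\int|\beta_y|e^{2g}$ (bounded via Lemma \ref{nearreal} applied to $2u_0$), $\int|\beta_y||w|^4\le C\Vert v\Vert_*^4$ (Proposition \ref{prep1}), and $\int|\beta_y|e^{4|w|}$ (bounded via Proposition \ref{prep2}, as $\Vert 4v\Vert_*$ is small). This yields $|E|\le c(u_0)\delta$, so shrinking $\delta$ until $c(u_0)\delta\le d_0/2$ forces $|D|\ge d_0/2$ uniformly on $\mathbb U$.

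For the numerator I apply $|e^z-1|\le|z|e^{|z|}$ and Hölder with exponents $p,p'$ (where $p'=p/(p-1)$):
\[
|N(x,y)|\le\left(\int_{\mathbb R}|\alpha_y(x-t)|\,|\tilde u(t)-\tilde u_I|^p\,dt\right)^{1/p}\left(\int_{\mathbb R}|\alpha_y(x-t)|\,e^{p'|\tilde u(t)-\tilde u_I|}\,dt\right)^{1/p'}.
\]
The second factor equals $\bigl(\int|\alpha_y|e^{|p'\tilde u-(p'\tilde u)_I|}\bigr)^{1/p'}$ and is bounded by a constant $C(u_0)$ by Lemma \ref{nearreal} applied to $p'u_0$, provided $\delta\le C_{JN}/(4p')$. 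Combined with Proposition \ref{prep1} (with $k=p$), which bounds the first factor by $C(p)^{1/p}\Vert\tilde u\Vert_*$ and hence by a constant depending on $u_0$, this gives $\Vert N\Vert_\infty\le C(u_0)$, and therefore $\Vert\mu_{\tilde u}\Vert_\infty\le 2C(u_0)/d_0$.

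It remains to bound $\Vert\mu_{\tilde u}\Vert_p$. Since $|\mu_{\tilde u}|\le(2/d_0)|N|$, it suffices to integrate the displayed bound for $|N|^p$, which reduces to showing $\iint_{\mathbb U}\bigl(\int_{\mathbb R}|\alpha_y(x-t)||\tilde u(t)-\tilde u_I|^p\,dt\bigr)\,dx\,dy/y^2\lesssim\Vert\tilde u\Vert_{B_p}^p$. By Jensen, $|\tilde u(t)-\tilde u_I|^p\le\frac{1}{2y}\int_I|\tilde u(t)-\tilde u(s)|^p\,ds$; after Fubini the $(x,y)$-integration collapses into a kernel $K(t,s)=\iint_{|x-s|<y}|\alpha_y(x-t)|\,dx\,dy/(2y^3)$, which by scaling and the exponential decay of $\alpha$ equals $c_\alpha|t-s|^{-2}$ with $c_\alpha<\infty$. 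This reproduces the $B_p$-seminorm, so $\Vert\mu_{\tilde u}\Vert_p\lesssim\Vert\tilde u\Vert_{B_p}\le\Vert u_0\Vert_{B_p}+\delta$. Adding the two estimates gives $\Vert\mu_{\tilde u}\Vert_\infty+\Vert\mu_{\tilde u}\Vert_p\le M$ for a constant $M=M(u_0)$, after choosing $\delta$ small depending on $u_0$ and $p$. The lower half-plane case is identical, using (\ref{lowerhalfplane}) in place of (\ref{F}).
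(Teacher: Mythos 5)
Your proof is correct, and while it shares the paper's overall skeleton (write $\mu_{\tilde u}=N/D$, bound $|D|$ below and $|N|$ above via $|e^z-1|\le|z|e^{|z|}$, H\"older, and Propositions \ref{prep1}, \ref{prep2} and Lemma \ref{nearreal}), it deviates from the paper at two substantive points, both to good effect. First, the paper splits $\tilde u=u+iv$ into its \emph{real and imaginary parts} and anchors the lower bound for $D$ on the $A_\infty$-property of $e^{u}$ for the varying real part $u={\rm Re}\,\tilde u$, using $|e^{i(v-v_I)}-1|\le|v-v_I|$ for the error; the constants there depend on the doubling constant of $e^{{\rm Re}\,\tilde u}$, which the paper only controls with a somewhat informal ``depending on where $u$ moves'' remark. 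You instead decompose relative to the \emph{fixed base point} $u_0$, so the main term $D_0=e^{-(u_0)_I}F_{u_0,z}$ involves only $u_0$ and the constant $d_0$ manifestly depends on $u_0$ alone; the price is that your perturbation $w$ is complex, forcing the extra factor $e^{|w|}$ and the $(2,4,4)$-H\"older split with Proposition \ref{prep2}, but this is a fair trade and arguably tightens the uniformity issue. Second, for $\Vert\mu_{\tilde u}\Vert_p$ the paper invokes its separate Lemma \ref{mainlemma}, whose proof runs through a dyadic decomposition into the quantities $A_n$, $L_n$ and the explicit integration \eqref{compute}; your Jensen--Fubini--scaling computation of the kernel $K(t,s)=c_\alpha|t-s|^{-2}$ reaches the same bound $\lesssim\Vert\tilde u\Vert_{B_p}^p$ more directly (the finiteness of $c_\alpha$ follows from $|\alpha(x)|\le Ce^{-|x|}$, as you say). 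What the paper's dyadic formulation buys in exchange is reusability: the pointwise expansion \eqref{p} in terms of the $L_n$ is quoted again in Proposition \ref{M_0} (vanishing at the boundary) and in the unit-circle estimate \eqref{p+} of Theorem \ref{Delta}, where the periodic adaptation needs exactly that structure, so your slicker argument covers this proposition but would not substitute for Lemma \ref{mainlemma} elsewhere in the paper. Your $(p,p')$-H\"older split of $N$ (versus the paper's Cauchy--Schwarz) is a minor variation that conveniently lets the same displayed bound serve both the $\infty$-norm and the $p$-norm estimates.
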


\begin{proof}
For any $\tilde u \in B_p(\mathbb R)$, 
we set $\tilde u=u+iv$ where $u$ and $v$ are real-valued.
Fixing $I(x,y)=(x-y,x+y) \subset \mathbb R$ for $(x,y) \in \mathbb U$,
we consider
\begin{equation}\label{mu-fraction}
|\mu_{\tilde u}(x,y)|=\frac{|\alpha_y\ast e^{\tilde u}(x)|}{|\beta_y\ast e^{\tilde u}(x)|}
=\frac{|\alpha_y\ast e^{\tilde u- \tilde u_{I(x,y)}}(x)|}{|\beta_y\ast e^{\tilde u-\tilde u_{I(x,y)}}(x)|}.
\end{equation}
The denominator is estimated from below as
\begin{equation*}
\begin{split}
|\beta_y\ast e^{\tilde u-\tilde u_{I(x,y)}}(x)|&=|\beta_y\ast(e^{u-u_{I(x,y)}}(e^{i(v-v_{I(x,y)})}-1))
+\beta_y\ast e^{u-u_{I(x,y)}}|\\
&\geq |\beta_y\ast e^{u-u_{I(x,y)}}|-|\beta_y\ast(e^{u-u_{I(x,y)}}(e^{i(v-v_{I(x,y)})}-1))|.
\end{split}
\end{equation*}
Here, since $u$ is real-valued, we see as in the proof of Proposition \ref{biLipproperty} that 
$$
|\beta_y\ast e^{u-u_{I(x,y)}}| \asymp 
|\phi_y\ast e^{u-u_{I(x,y)}}| 
$$
for $\phi(x)=\frac{1}{\sqrt \pi}e^{-x^2}$. 
The Jensen inequality implies that
\begin{equation}\label{s2}
\begin{split}
\left|(\phi_y\ast e^{u-u_{I(x,y)}})(x)\right| & \geq \int_{\mathbb{R}}\phi_y(x-t) e^{-|u(t)-u_{I(x,y)}|} dt\\
& \geq \frac{2}{\sqrt \pi e}\left(\frac{1}{2y}\int_{|x-t|<y} e^{-|u(t)-u_{I(x,y)}|} dt \right)\\ 
& \geq \frac{2}{\sqrt \pi e}\exp (-\Vert u \Vert_*) 
\gtrsim  1.
\end{split}
\end{equation}
On the contrary, the Cauchy--Schwarz inequality and
$|e^{ix}-1| \leq |x|$
yield that
\begin{equation}\label{denomi}
\begin{split}
&\quad |\beta_y\ast(e^{u-u_{I(x,y)}}(e^{i(v-v_{I(x,y)})}-1))(x)| \\
&\leq \int_{\mathbb R} |\beta_y(x-t)|e^{u(t)-u_{I(x,y)}}|e^{i(v(t)-v_{I(x,y)})}-1|dt \\
&\leq \left(\int_{\mathbb R} |\beta_y(x-t)|e^{2(u(t)-u_{I(x,y)})}dt \right)^{1/2}
\left(\int_{\mathbb R} |\beta_y(x-t)||v(t)-v_{I(x,y)}|^2 dt \right)^{1/2}.
\end{split}
\end{equation}
By Lemma \ref{nearreal}, the first factor of the last line of (\ref{denomi}) is locally bounded.
By Proposition \ref{prep1}, the second factor is bounded by a multiple of $\Vert v \Vert_* \leq \Vert v \Vert_{B_p}$.
Thus, the denominator in the fraction of (\ref{mu-fraction}) is
bounded away from $0$ if $\Vert v \Vert_{B_p}$ is sufficiently small depending on 
where $u$ moves.
In particular, there is some $\delta>0$ with $\delta \leq C_{JN}/8$ such that if $\Vert u-u_0 \Vert_{B_p} < \delta/2$ and
if $\Vert v \Vert_{B_p} < \delta/2$, then the denominator is uniformly bounded away from $0$. 
This in particular shows that there is some $C>0$ such that if $\Vert \tilde u-u_0 \Vert_{B_p} <\delta$ then
\begin{equation}\label{onlyalpha}
|\mu_{\tilde u}(x,y)| \leq C |\alpha_y\ast e^{\tilde u-\tilde u_{I(x,y)}}(x)|.
\end{equation}

The right hand $|\alpha_y\ast e^{\tilde u-\tilde u_{I(x,y)}}(x)|$ coincides with
$|\alpha_y\ast (e^{\tilde u-\tilde u_{I(x,y)}}-1)(x)|$ by $\int_{\mathbb R} \alpha(x)dx=0$. Then, similarly to the above estimate,
we have
\begin{equation}\label{numer}
\begin{split}
&\quad |\alpha_y\ast (e^{\tilde u-\tilde u_{I(x,y)}}-1)(x)|\\
&\leq \int_{\mathbb R} |\alpha_y(x-t)||\tilde u(t)-\tilde u_{I(x,y)}|e^{|\tilde u(t)-\tilde u_{I(x,y)}|}dt\\
&\leq \left(\int_{\mathbb R} |\alpha_y(x-t)||\tilde u(t)-\tilde u_{I(x,y)}|^2dt \right)^{1/2}
\left(\int_{\mathbb R} |\alpha_y(x-t)|e^{2|\tilde u(t)-\tilde u_{I(x,y)}|}dt \right)^{1/2}.
\end{split}
\end{equation}
Again by Proposition \ref{prep1} and Lemma \ref{nearreal}, this is bounded if
$\Vert \tilde u-u_0 \Vert_{B_p} \leq C_{JN}/8$.
By \eqref{onlyalpha}, we see that $\Vert \mu_{\tilde u} \Vert_\infty$ is bounded if
$\Vert \tilde u-u_0 \Vert_{B_p} <\delta$. By \eqref{onlyalpha} again,  the boundedness of $\Vert \mu_{\tilde u} \Vert_p$ follows from  
Lemma \ref{mainlemma} below. Thus, the proof is completed.
\end{proof}

\begin{lemma}\label{mainlemma}
Suppose that the complex dilatation $\mu$ on $\mathbb U$ is given so that
$$
|\mu(x,y)| \lesssim |\alpha_y\ast e^{u - u_{I(x,y)}} (x)|
$$
for $\alpha \in C^\infty(\mathbb R)$ with $\int_{\mathbb{R}}\alpha(x)dx = 0$ and $|\alpha(x)| \leq Ce^{-|x|}$ and
for $u \in {\rm BMO}(\mathbb R)$. 
If $u \in B_p(\mathbb R)$ is within norm $C_{JN}/(4q)$ from $B^{\mathbb R}_p(\mathbb R)$ for $1/p+1/q=1$,
then 
\begin{equation*}
\Vert \mu \Vert_p^p=\iint_{\mathbb{U}}\frac{|\mu(x,y)|^p}{y^2} dxdy \leq C_p(u)  \Vert u \Vert_{B_p}^{p}, 
\end{equation*}
where the constant $C_p(u)>0$ is locally bounded in the neighborhood of $B^{\mathbb R}_p(\mathbb R)$.
\end{lemma}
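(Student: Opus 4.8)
The plan is to start from the pointwise hypothesis $|\mu(x,y)| \lesssim |\alpha_y \ast e^{u - u_{I(x,y)}}(x)|$ and, since $\int_{\mathbb R}\alpha\,dx = 0$ forces $\int_{\mathbb R}\alpha_y\,dx=0$, to rewrite the right-hand side as $|\alpha_y \ast (e^{u - u_{I(x,y)}} - 1)(x)|$. Using $|e^z - 1| \le |z|e^{|z|}$ and then H\"older's inequality with exponents $p$ and $q$ against the finite measure $|\alpha_y(x-t)|\,dt$ (splitting $|\alpha_y| = |\alpha_y|^{1/p}|\alpha_y|^{1/q}$, exactly as in (\ref{correct}) and (\ref{numer})), I would obtain the pointwise bound
\[
|\mu(x,y)|^p \lesssim \Big(\int_{\mathbb R} |\alpha_y(x-t)|\,|u(t)-u_{I(x,y)}|^p\,dt\Big)\Big(\int_{\mathbb R}|\alpha_y(x-t)|\,e^{q|u(t)-u_{I(x,y)}|}\,dt\Big)^{p/q}.
\]

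The second factor is where the norm hypothesis is used. Applying Lemma~\ref{nearreal} to the function $qu$, whose distance to $qu_0 \in B_p^{\mathbb R}(\mathbb R)$ equals $q\|u-u_0\|_{B_p} \le C_{JN}/4$ precisely under the assumption $\|u-u_0\|_{B_p}\le C_{JN}/(4q)$, this exponential factor is bounded by a constant $C'(u_0)^{p/q}$ that is locally bounded near $B_p^{\mathbb R}(\mathbb R)$. It then remains to integrate the first factor against the hyperbolic measure, so the entire problem reduces to proving
\[
J := \iint_{\mathbb U}\Big(\int_{\mathbb R}|\alpha_y(x-t)|\,|u(t)-u_{I(x,y)}|^p\,dt\Big)\frac{dx\,dy}{y^2} \lesssim \|u\|_{B_p}^p.
\]

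To estimate $J$ I would write $u_{I(x,y)} = \tfrac{1}{2y}\int_{x-y}^{x+y}u(s)\,ds$ and apply Jensen's inequality to get $|u(t)-u_{I(x,y)}|^p \le \tfrac{1}{2y}\int_{x-y}^{x+y}|u(t)-u(s)|^p\,ds$. Since every integrand is non-negative, Tonelli's theorem permits carrying out the $x$- and $y$-integrations first for fixed $t,s$. Setting $d=t-s$, the substitution $w=(x-t)/y$ turns the $x$-integral into $y^{-3}\int_{|w+d/y|<1}|\alpha(w)|\,dw$, and the further substitution $v=|d|/y$ collapses the $y$-integral to $\tfrac{1}{|d|^2}\int_0^\infty v\big(\int_{|w+\mathrm{sgn}(d)v|<1}|\alpha(w)|\,dw\big)\,dv$. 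The decay $|\alpha(w)|\le Ce^{-|w|}$ makes this last one-dimensional integral a finite constant $C_\alpha$ depending only on $\alpha$ (convergence is clear on $v\in[0,2]$ and from $|w|\ge v-1$ on the inner interval for large $v$). Hence the combined kernel in $(t,s)$ is dominated by $C_\alpha/|t-s|^2$, and integrating against $|u(t)-u(s)|^p$ yields $J \lesssim C_\alpha\|u\|_{B_p}^p$. Combining the two factors gives $\|\mu\|_p^p \le C_p(u)\|u\|_{B_p}^p$ with $C_p(u)$ locally bounded near $B_p^{\mathbb R}(\mathbb R)$.

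The main obstacle I anticipate is the $J$-estimate, namely the reduction of the four-fold integral over $(x,y,t,s)$ to the homogeneous Besov double integral. The delicate point is that a crude pointwise bound on $\int|\alpha_y(x-t)|\,|u(t)-u_{I(x,y)}|^p\,dt$ through Proposition~\ref{prep1} would only return $\|u\|_*^p$ and would lose the information needed after the integration in $y$; the correct route is to keep the oscillation $|u(t)-u(s)|^p$ intact through Jensen's inequality and to execute the two change-of-variables carefully, so that the exponential decay of $\alpha$ produces exactly the $|t-s|^{-2}$ weight defining $\|u\|_{B_p}^p$.
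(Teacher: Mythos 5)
Your proposal is correct, and the first half (the pointwise H\"older splitting of $|\alpha_y\ast(e^{u-u_{I(x,y)}}-1)(x)|$ into the factors $\int|\alpha_y||u-u_{I}|^p\,dt$ and $(\int|\alpha_y|e^{q|u-u_{I}|}\,dt)^{p/q}$, with the exponential factor controlled by Lemma \ref{nearreal}) coincides with the paper's proof; in fact you make explicit a point the paper leaves implicit, namely that Lemma \ref{nearreal} must be applied to $qu$, which is exactly why the hypothesis is a $C_{JN}/(4q)$-neighborhood rather than a $C_{JN}/4$-neighborhood. Where you genuinely diverge is in the estimate of the first factor integrated against $dx\,dy/y^2$. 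The paper re-runs the dyadic decomposition of Proposition \ref{prep1}, bounds each dyadic average $A_n$ by a double integral via H\"older, then uses the translation trick $|u(t)-u(s)|^p\leq 2^{p-1}(|u(t)-u(x)|^p+|u(x)-u(s)|^p)$ to produce the quantities $L_n$, integrates each $L_n/y^2$ over $\mathbb U$ exactly (getting $2^{n-2}\Vert u\Vert_{B_p}^p$), and sums the rapidly convergent series. You instead apply Jensen once to reach $|u(t)-u(s)|^p$, then use Tonelli to integrate out $(x,y)$ first, and your two substitutions correctly collapse the kernel to $K(t,s)\lesssim |t-s|^{-2}$ (the convergence check via $|w|\geq v-1$ and the exponential decay of $\alpha$ is right), giving $J\lesssim\Vert u\Vert_{B_p}^p$ in one stroke. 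Your route is shorter and more transparent, avoiding both the $(a+b)^p$ inequality and the dyadic series. What the paper's route buys in exchange is a \emph{pointwise} bound, inequality (\ref{p}), expressing $|\mu(x,y)|^p$ in terms of the local quantities $L_n$; this is reused verbatim in the appendix (proof of Theorem \ref{Delta}), where the integration is carried out only over a truncated strip $0<y<c$ with a periodic $u$, a situation your global Tonelli computation does not directly cover. So your argument proves the lemma as stated, but would need to be redone (restricting the $(x,y)$-domain before Tonelli) to serve the appendix's purpose.
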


\begin{proof}
By $\int_{\mathbb{R}}\alpha(x)dx = 0$ and the inequality $|e^z-1| \leq |z|e^{|z|}$, we have  
\begin{equation}\label{setup}
\begin{split}
|\mu(x,y)|^p &\lesssim |\alpha_y\ast (e^{u - u_{I(x,y)}} - 1)(x)|^p\\
&\leq\left(\int_{\mathbb{R}}|\alpha_y(x-t)| |u(t) - u_{I(x,y)}| e^{|u(t) - u_{I(x,y)}|} dt\right)^p\\
&\leq \left(\int_{\mathbb{R}}|\alpha_y(x-t)| |u(t) - u_{I(x,y)}|^p dt \right)  
\left(\int_{\mathbb{R}}|\alpha_y(x-t)| e^{q|u(t) - u_{I(x,y)}|} dt \right)^{p/q}.\\
\end{split}
\end{equation}
Since $u \in B_p(\mathbb R)$ is in the small neighborhood of 
$B_p^{\mathbb R}(\mathbb R)$, Lemma \ref{nearreal} implies that the second factor of the last line above
is bounded by a locally bounded constant $C(u)>0$. Thus, we have only to consider the first factor.

As in the proof of Proposition \ref{prep1},
we decompose the first factor into 
\begin{equation}\label{factor}
\begin{split}
&\quad\int_{\mathbb{R}}|\alpha_y(x-t)| |u(t) - u_{I(x,y)}|^p dt \\
& \leq \frac{2C}{|I_0|} \int_{I_0} |u(t) - u_{I_0}|^p dt
+ \sum_{n=0}^{\infty} \frac{2^{n+2}C}{e^{2^n}|I_{n+1}|}\int_{I_{n+1}} |u(t) - u_{I_0}|^p dt\\
& \leq C \sum_{n=0}^{\infty} \frac{2^{n+2}}{e^{2^{n-1}}|I_{n}|}\int_{I_{n}} |u(t) - u_{I_0}|^p dt
=C\sum_{n=0}^{\infty} \frac{2^{n+2}}{e^{2^{n-1}}}A_n,
\end{split}
\end{equation}
where we set
$$
A_{n}=\frac{1}{|I_{n}|}\int_{I_{n}} |u(t) - u_{I_0}|^p dt
$$
for $I_n=(x+2^ny,x-2^ny)$
for every integer $n \geq 0$. 
Moreover, by using 
 the H\"older inequality, we compute 
\begin{equation*}
\begin{split}
A_{n}& 
=\frac{1}{|I_{n}|}\int_{I_{n}} \left|u(t) - \frac{1}{|I_0|} \int_{I_0}u(s) ds  \right|^p dt
=\frac{1}{|I_{n}|}\int_{I_{n}}\left| \frac{1}{|I_0|}\int_{I_0}(u(t)-u(s)) ds \right|^p dt\\
&
\leq \frac{1}{|I_{n}||I_0|}\int_{I_{n}}\int_{I_0} |u(t) - u(s)|^p ds dt.\\
\end{split}
\end{equation*}
By the translation and by $(a + b)^{p} \leq 2^{p-1}(a^{p} + b^{p})$ for $a,b \geq 0$, 
the last line above is further estimated by
\begin{equation*}
\begin{split}
&\quad \frac{1}{|I_{n}||I_0|}\int_{I_{n}}\int_{I_0} |u(t) - u(s)|^p ds dt\\
&\leq \frac{2^{p-1}}{2^{n+2}y^2} \int_{-2^{n}y}^{2^{n}y} \int_{-y}^{y} (|u(x+t) - u(x) |^p + |u(x + s) - u(x) |^p )ds dt\\
& = 2^{p-1}\left(\frac{1}{2^{n+1}y}\int_{-2^{n}y}^{2^{n}y} |u(x+t) - u(x)|^p dt + \frac{1}{2y}\int_{-y}^{y} |u(x+s) - u(x)|^p ds \right)\\
& = 2^{p-1}(L_{n} + L_{0}),
\end{split}
\end{equation*}
where we set 
$$
L_{n}=\frac{1}{2^{n+1}y}\int_{-2^{n}y}^{2^{n}y} |u(x+t) - u(x)|^p dt
$$
for every integer $n \geq 0$. 

By the substitution of all the above computation into (\ref{setup}), we obtain that
\begin{equation}\label{p}
\begin{split}
|\mu(x,y)|^p  \lesssim C(u)C\sum_{n=0}^{\infty} \frac{2^{n+2}}{e^{2^{n-1}}}A_n
\leq C_p(u) \sum_{n=0}^{\infty}\frac{1}{e^{2^n}} (L_n+L_0)
\end{split}
\end{equation}
for some constant $C_p(u)>0$, which is locally bounded in the neighborhood of $B^{\mathbb R}_p(\mathbb R)$.
Moreover, the integral of each term $L_n$ over $\mathbb U$ is explicitly given as follows:
\begin{equation}\label{compute}
\begin{split}
\iint_{\mathbb{U}} \frac{L_{n}}{y^2} dxdy  
&= \frac{1}{2^{n+1}}\iint_{\mathbb{U}} dxdy\int_{-2^{n}y}^{2^{n}y} \frac{|u(x+t) - u(x)|^p}{y^3} dt \\
& =\frac{1}{2^{n+1}}\int_{-\infty}^{+\infty} dx \int_{0}^{+\infty} \frac{dy}{y^3}  \int_0^{2^{n}y} (|u(x+t) - u(x)|^p + |u(x-t) - u(x)|^p) dt\\
& = \frac{1}{2^{n+1}}\int_{-\infty}^{+\infty} dx \int_{0}^{+\infty}  (|u(x+t) - u(x)|^p + |u(x-t) - u(x)|^p) dt \int_{2^{-n}t}^{+\infty}\frac{dy}{y^3} \\
& = 2^{n-2} \int_{-\infty}^{+\infty} dx \int_0^{+\infty}\frac{|u(x+t) - u(x)|^p + |u(x-t) - u(x)|^p}{t^2} dt\\
& = 2^{n-2} \int_{-\infty}^{+\infty} dx \int_{-\infty}^{+\infty}\frac{|u(x+t) - u(x)|^p}{t^2} dt
= 2^{n-2} \Vert u \Vert_{B_p}^{p}.
\end{split}
\end{equation}
Therefore,
\begin{equation*}
\iint_{\mathbb{U}}\frac{|\mu(z)|^p}{y^2} dxdy 
\leq C_p(u) \sum_{n=0}^\infty \frac{1}{e^{2^n}}\left(2^{n-2}+2^{-2} \right) \Vert u \Vert_{B_p}^{p}
\leq C_p(u) \Vert u \Vert_{B_p}^{p}, 
\end{equation*}
which proves the statement of the lemma.
\end{proof}

\begin{proposition}\label{M_0}
Under the same circumstances as in Lemma \ref{mainlemma}, 
$$
\lim_{t \to 0} \sup_{0<y<t} |\mu(x,y)|=0,
$$
that is, $\mu \in L_0(\mathbb U)$. 
\end{proposition}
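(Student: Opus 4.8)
The plan is to start from the estimate already obtained in the proof of Lemma \ref{mainlemma}, where we bounded
\[
|\mu(x,y)|^p \lesssim \left(\int_{\mathbb R}|\alpha_y(x-t)|\,|u(t)-u_{I(x,y)}|^p\,dt\right)\left(\int_{\mathbb R}|\alpha_y(x-t)|\,e^{q|u(t)-u_{I(x,y)}|}\,dt\right)^{p/q}.
\]
By Lemma \ref{nearreal} the second factor is bounded by a constant $C(u)$ that is uniform in both $x$ and $y$ and locally bounded near $B_p^{\mathbb R}(\mathbb R)$. Hence it suffices to prove that the first factor tends to $0$ as $y\to 0$, uniformly in $x$; this uniformity is exactly what the defining condition of $L_0(\mathbb U)$ demands.

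First I would reuse the dyadic decomposition from Lemma \ref{mainlemma}. Writing $I_n=(x-2^ny,x+2^ny)$ for $n\geq 0$, it gives
\[
\int_{\mathbb R}|\alpha_y(x-t)|\,|u(t)-u_{I(x,y)}|^p\,dt \lesssim \sum_{n=0}^{\infty}\frac{2^{n+2}}{e^{2^{n-1}}}A_n, \qquad A_n=\frac{1}{|I_n|}\int_{I_n}|u(t)-u_{I_0}|^p\,dt,
\]
and the same Jensen step as in that proof yields $A_n\leq B_n:=\frac{1}{|I_n||I_0|}\int_{I_n}\int_{I_0}|u(t)-u(s)|^p\,ds\,dt$. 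The new ingredient I would insert is to reinstate the Besov weight: since $|t-s|\leq 2^{n+1}y$ for $t\in I_n$ and $s\in I_0\subset I_n$, we have $|u(t)-u(s)|^p\leq (2^{n+1}y)^2\,|u(t)-u(s)|^p/|t-s|^2$, so that
\[
B_n\leq \frac{(2^{n+1}y)^2}{|I_n||I_0|}\int_{I_n}\int_{I_n}\frac{|u(t)-u(s)|^p}{|t-s|^2}\,ds\,dt = 2^n\,\mathcal E(I_n),
\]
where $\mathcal E(I)=\int_I\int_I|u(t)-u(s)|^p|t-s|^{-2}\,ds\,dt$ denotes the local Besov energy over $I$. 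Plugging this in, the first factor is dominated by $\sum_{n\geq 0}2^{2n+2}e^{-2^{n-1}}\mathcal E(I_n)$, a series with summable coefficients.

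The final step is the uniform vanishing of this series as $y\to 0$. Setting $G(t)=\int_{\mathbb R}|u(t)-u(s)|^p|t-s|^{-2}\,ds$, we have $G\in L^1(\mathbb R)$ with $\Vert G\Vert_{L^1}=\Vert u\Vert_{B_p}^p$, and $\mathcal E(I_n)\leq \int_{I_n}G(t)\,dt$. For the tail I would use the crude bound $\mathcal E(I_n)\leq \Vert G\Vert_{L^1}$ and summability of the coefficients to pick $N$ with $\sum_{n>N}2^{2n+2}e^{-2^{n-1}}\Vert G\Vert_{L^1}<\varepsilon$; for the finitely many remaining $n\leq N$, absolute continuity of the Lebesgue integral of $G$ provides some $\eta>0$ with $\int_E G<\varepsilon$ whenever $|E|<\eta$, whence $\sup_x\int_{I_n}G<\varepsilon$ as soon as $2^{n+1}y<\eta$. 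Letting $y\to 0$ then gives the claim.

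The step I expect to be the main obstacle is securing the uniformity in $x$. A pointwise-in-$x$ vanishing would follow immediately from $u\in {\rm VMO}(\mathbb R)$ (Proposition \ref{BMOnorm}), but the $L_0$-condition requires control independent of the base point. This is precisely where the absolute continuity of the integral of $G\in L^1(\mathbb R)$ does the work, since it is uniform over all sets of small measure regardless of their location; and it is also where reducing $A_n$ to the genuinely $y$-shrinking local energy $\mathcal E(I_n)$—rather than keeping the mean oscillation, which need not shrink—becomes essential.
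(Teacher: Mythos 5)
Your proof is correct, and it reaches the conclusion by a genuinely different mechanism from the paper's. The paper re-runs the estimate of Lemma \ref{mainlemma} with exponent $2$ (Cauchy--Schwarz instead of the $(p,q)$-H\"older splitting), obtaining $|\mu(x,y)|^2 \lesssim \sum_n \frac{2^{n+2}}{e^{2^{n-1}}}\frac{1}{|I_n|}\int_{I_n}|u-u_{I_0}|^2\,dt$; it shows this series converges using the John--Nirenberg bounds \eqref{k!} and \eqref{2n^k}, and then treats the finitely many leading terms by the fact that $u \in {\rm VMO}(\mathbb R)$ (Proposition \ref{BMOnorm}): once $2^{N+1}y \leq \delta$, each $I_n$ with $n \leq N$ has length at most $\delta$, so the John--Nirenberg inequality restricted to these small intervals allows $\Vert u \Vert_*$ to be replaced by the small VMO modulus $\varepsilon$. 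You instead keep the exponent $p$ throughout, reinstate the Besov kernel via $|u(t)-u(s)|^p \leq (2^{n+1}y)^2 |u(t)-u(s)|^p/|t-s|^2$, dominate each $A_n$ by $2^n \int_{I_n} G$ with $G(t)=\int_{\mathbb R}|u(t)-u(s)|^p|t-s|^{-2}ds \in L^1(\mathbb R)$, and conclude by uniform absolute continuity of the integral of $G$. Both mechanisms supply the uniformity in $x$ that membership in $L_0(\mathbb U)$ demands, since the VMO modulus and the absolute-continuity modulus are both independent of the location of the interval. Your route has the minor technical advantage of reusing exactly the H\"older splitting of Lemma \ref{mainlemma}, so Lemma \ref{nearreal} is invoked with the same exponent $q$ and under precisely the stated hypothesis $\Vert u - u_0\Vert_{B_p} \leq C_{JN}/(4q)$, whereas the paper's $p=2$ rerun requires bounding $\int |\alpha_y(x-t)| e^{2|u(t)-u_{I_0}|}dt$, i.e., applying Lemma \ref{nearreal} to $2u$, which strictly speaking asks for a slightly smaller neighborhood when $p>2$. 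Conversely, the paper's argument isolates the qualitative source of the vanishing (the VMO property alone drives the head terms to zero), while yours exploits the quantitative $L^1$ Besov energy density, which is the more natural tool given that the standing hypothesis already places $u$ in $B_p(\mathbb R)$.
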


\begin{proof}
By (\ref{setup}) and (\ref{factor}) for $p=2$, we have
\begin{equation}\label{factor2}
|\mu(x,y)|^2 \lesssim C(u)C\sum_{n=0}^{\infty} \frac{2^{n+2}}{e^{2^{n-1}}|I_{n}|}\int_{I_{n}} |u(t) - u_{I_0}|^2 dt,
\end{equation}
and by (\ref{k!}) and (\ref{2n^k}), we have
\begin{equation}\label{together}
\begin{split}
&\quad \frac{1}{|I_{n}|}\int_{I_{n}} |u(t) - u_{I_0}|^2 dt\\
&\leq
\frac{2}{|I_{n}|}\int_{I_{n}}|u(t) - u_{I_{n}}|^2dt+\frac{2}{|I_{n}|}\int_{I_{n}}|u_{I_{n}} - u_{I_0}|^2dt 
\leq 2\left(\frac{2C_0}{C_{JN}^2}+4n^2 \right)\Vert u \Vert_*^2.
\end{split}
\end{equation}
This implies that the infinite series in (\ref{factor2}) converges.
Hence, for an arbitrarily given $\varepsilon>0$, we can choose $N \in \mathbb N$ such that
$$
C(u)C\sum_{n=N+1}^{\infty} \frac{2^{n+2}}{e^{2^{n-1}}|I_{n}|}\int_{I_{n}} |u(t) - u_{I_0}|^2 dt<\varepsilon^2.
$$

Since $u \in {\rm VMO}(\mathbb R)$ by Proposition \ref{BMOnorm}, there is $\delta>0$ such that
for any interval $J \subset \mathbb R$ with $|J| \leq \delta$, we have
\begin{equation}\label{smallJ}
\frac{1}{|J|} \int_J |u(t)-u_J|dt \leq \varepsilon.
\end{equation}
If $2^{N+1}y \leq \delta$, then $|I_n| \leq \delta$ for $0 \leq n \leq N$, and hence inequality (\ref{smallJ}) is valid for
$J=I_n$ $(0 \leq n \leq N)$. Then, we apply the John--Nirenberg inequality (\ref{JN}) restricted to these small intervals.
By using (\ref{together}), we can estimate
$$
C(u)C\sum_{n=0}^{N} \frac{2^{n+2}}{e^{2^{n-1}}|I_{n}|}\int_{I_{n}} |u(t) - u_{I_0}|^2 dt
$$
from above by a multiple of $\Vert u \Vert_*^2$, but when $y \leq \delta/2^{N+1}$ we see
from  (\ref{smallJ}) that $\Vert u \Vert_*$
can be replaced with $\varepsilon$.
\end{proof}

For any $u \in B_p(\mathbb R)$,  by setting $\Lambda(u) = \mu_u$, we define a map $\Lambda$ on $B_p(\mathbb R)$. 
More explicitly,
\begin{equation}\label{dilatation}
\Lambda(u)(x,y)=\frac{\alpha_y\ast e^u(x)}{\beta_y\ast e^u(x)}
\end{equation}
for the functions $\alpha, \beta$ in (\ref{alphabeta}). Now we are ready for showing the main part of our result,
the holomorphy of $\Lambda$.

\begin{theorem}\label{complex}
There exists a neighborhood $U(B_p^{\mathbb R}(\mathbb R))$ of 
the subspace $B_p^{\mathbb R}(\mathbb R)$ of the real-valued functions in $B_p(\mathbb R)$ such that
$\Lambda: U(B_p^{\mathbb R}(\mathbb R)) \to \mathcal{L}_p(\mathbb{U})$ is holomorphic and the image of 
$U(B_p^{\mathbb R}(\mathbb R))$ under $\Lambda$ is contained in $\mathcal{M}_p(\mathbb{U}) \cap M_0(\mathbb U)$.
\end{theorem}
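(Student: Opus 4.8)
The plan is to take $U(B_p^{\mathbb R}(\mathbb R))$ to be the union, over all $u_0\in B_p^{\mathbb R}(\mathbb R)$, of the balls $B(u_0,\delta(u_0))$ furnished by Proposition \ref{bounded}, with each radius possibly shrunk so that the two conclusions below hold, and then to treat the holomorphy of $\Lambda$ and the location of its image separately. For the image, almost everything is already assembled: Proposition \ref{bounded} (which itself invokes Lemma \ref{mainlemma} for the $p$-norm) gives $\Vert \mu_{\tilde u}\Vert_\infty+\Vert \mu_{\tilde u}\Vert_p<\infty$, so $\Lambda(\tilde u)\in\mathcal L_p(\mathbb U)$, while Proposition \ref{M_0} gives $\Lambda(\tilde u)\in L_0(\mathbb U)$. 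The only missing inclusion is membership in the open unit ball $M(\mathbb U)$, that is, $\Vert \mu_{\tilde u}\Vert_\infty<1$. For the real center $u_0$ we have $e^{u_0}\in A_\infty$ by Proposition \ref{BpA}, so Theorem \ref{FKP} makes $F_{u_0}$ quasiconformal and hence $\Vert \mu_{u_0}\Vert_\infty<1$. Since the $\mathcal L_p$-norm dominates the sup-norm, the continuity of $\Lambda$ (established below, and not requiring $\Vert\mu\Vert_\infty<1$) lets me shrink $\delta(u_0)$ so that $\Vert \mu_{\tilde u}\Vert_\infty<1$ on the whole ball. Combining the three inclusions yields $\Lambda(\tilde u)\in\mathcal M_p(\mathbb U)\cap M_0(\mathbb U)$.

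The heart of the matter is the holomorphy of $\Lambda$ into the Banach space $\mathcal L_p(\mathbb U)$, and my approach is to invoke the standard criterion that a locally bounded, G\^ateaux-holomorphic map between complex Banach spaces is Fr\'echet holomorphic. Local boundedness in the $\mathcal L_p$-norm is precisely Proposition \ref{bounded}. For G\^ateaux holomorphy, fix $u$ in a ball $B(u_0,\delta)$ and a direction $v\in B_p(\mathbb R)$, and set $\Phi(\zeta)=\Lambda(u+\zeta v)$ for small $\zeta\in\mathbb C$. Pointwise,
$$
\Phi(\zeta)(x,y)=\frac{\int_{\mathbb R}\alpha_y(x-t)\,e^{u(t)+\zeta v(t)}\,dt}{\int_{\mathbb R}\beta_y(x-t)\,e^{u(t)+\zeta v(t)}\,dt},
$$
a ratio of two functions entire in $\zeta$. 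By the lower bound on the normalized denominator obtained in the proof of Proposition \ref{bounded} (valid uniformly in $(x,y)$ once $u+\zeta v$ stays in the ball, i.e. for $\zeta$ small) together with the non-vanishing of the factor $e^{(u+\zeta v)_{I(x,y)}}$, each $\Phi(\zeta)(x,y)$ is holomorphic in $\zeta$ near $0$, with bounds uniform in $(x,y)$.

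To upgrade this pointwise holomorphy to holomorphy of the $\mathcal L_p(\mathbb U)$-valued function $\Phi$, I pass through weak holomorphy against a separating family of functionals. Because $\Vert\cdot\Vert_p\le\Vert\cdot\Vert_{\mathcal L_p}$, each $g\in L^q(\mathbb U,\,y^{-2}dxdy)$ with $1/p+1/q=1$ defines a continuous functional on $\mathcal L_p(\mathbb U)$, and these separate points since $\mathcal L_p(\mathbb U)\subset L^p(\mathbb U,\,y^{-2}dxdy)$. For such $g$ and a triangle $T$ in the $\zeta$-plane, the uniform $L^\infty$-bound on $\Phi(\zeta)$ from Proposition \ref{bounded} and $g\in L^q$ justify Fubini's theorem, whence
$$
\oint_{\partial T}\Big(\iint_{\mathbb U}\Phi(\zeta)(x,y)\,g(x,y)\,\tfrac{dxdy}{y^2}\Big)d\zeta=\iint_{\mathbb U}\Big(\oint_{\partial T}\Phi(\zeta)(x,y)\,d\zeta\Big)g(x,y)\,\tfrac{dxdy}{y^2}=0
$$
by Cauchy's theorem applied pointwise. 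Morera's theorem then makes $\zeta\mapsto\iint\Phi(\zeta)\,g\,y^{-2}dxdy$ holomorphic, so $\Phi$ is a locally bounded, weakly holomorphic function against a separating family, and hence (by the theorem of Arendt and Nikolski on vector-valued holomorphy) a genuine $\mathcal L_p(\mathbb U)$-valued holomorphic function. As $u$ and $v$ were arbitrary, $\Lambda$ is G\^ateaux holomorphic, and with local boundedness it is holomorphic on $U(B_p^{\mathbb R}(\mathbb R))$; in particular it is continuous, which is the continuity used in the first paragraph.

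I expect the main obstacle to be exactly this transfer from scalar and pointwise holomorphy to holomorphy in the mixed norm $\Vert\cdot\Vert_\infty+\Vert\cdot\Vert_p$. One must simultaneously keep the denominator uniformly bounded away from zero (so that the pointwise quotient is holomorphic in $\zeta$ and the quotient rule behaves well) and control everything by the integral estimates of Proposition \ref{bounded} and Lemma \ref{mainlemma}, so that the interchange of integrals in the Fubini--Morera step and the local $\mathcal L_p$-bound hold at the same time. Once these uniform estimates are in hand, the abstract holomorphy machinery closes the argument and the remaining inclusions follow directly from Propositions \ref{bounded} and \ref{M_0}.
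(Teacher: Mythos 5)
Your proposal is correct in outline and reaches the theorem, but the key technical step is handled by a genuinely different device than in the paper. Both you and the paper follow the same skeleton: local boundedness of $\Lambda$ in the $\mathcal L_p$-norm (Proposition \ref{bounded}), G\^ateaux holomorphy of $\zeta\mapsto\Lambda(u+\zeta v)$, the standard principle that a locally bounded G\^ateaux-holomorphic map is Fr\'echet holomorphic, and then continuity of $\Lambda$ plus openness of $\mathcal M_p(\mathbb U)$ in $\mathcal L_p(\mathbb U)$ (equivalently, your quasiconformality-of-$F_{u_0}$ argument via Proposition \ref{BpA} and Theorem \ref{FKP}) to shrink the neighborhood, with Proposition \ref{M_0} supplying membership in $M_0(\mathbb U)$. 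Where you diverge is in proving the $\mathcal L_p(\mathbb U)$-valued holomorphy of $\zeta\mapsto\Lambda(u+\zeta v)$: the paper does this directly and self-containedly, writing the second-order Taylor remainder via the Cauchy integral formula $\lambda(t)-\lambda(t_0)-(t-t_0)\lambda'(t_0)=\frac{(t-t_0)^2}{2\pi i}\oint_{|\tau|=2\epsilon}\frac{\lambda(\tau)}{(\tau-t_0)^2(\tau-t)}d\tau$ and estimating this remainder separately in the $\Vert\cdot\Vert_\infty$- and $\Vert\cdot\Vert_p$-norms using the uniform bound $M$ from Proposition \ref{bounded}, so that the difference quotients converge in $\mathcal L_p(\mathbb U)$ by hand; you instead pass through pointwise holomorphy, a Fubini--Morera argument against a separating family of $L^q$-functionals, and the Arendt--Nikolski theorem (locally bounded $+$ weakly holomorphic against a separating subspace $\Rightarrow$ holomorphic). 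Your route buys brevity at the price of heavier abstract machinery; the paper's route is elementary and yields the derivative explicitly. Two small points in your version need tightening: (i) Fubini is not justified by the $L^\infty$-bound alone, since $|g|y^{-2}$ need not be integrable for $g\in L^q(\mathbb U, y^{-2}dxdy)$ --- you should either invoke H\"older together with the uniform $p$-norm bound $\Vert\Phi(\zeta)\Vert_p\le M$ (also available from Proposition \ref{bounded}), or restrict to the still-separating subfamily of bounded, compactly supported $g$; (ii) Morera requires continuity of $\zeta\mapsto\iint\Phi(\zeta)g\,y^{-2}dxdy$, which is immediate by dominated convergence for the nice subfamily in (i) but not automatic for general $g\in L^q$. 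With either fix, your argument closes.
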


It should be pointed out that the statement of Theorem \ref{complex} and its proof are inspired by 
Shen and Tang \cite[Lemma 6.1]{ST}. This is originally in Takhtajan and Teo \cite[p.30]{TT}.
It is worthwhile to compare our arguments with theirs. They proved that 
$\Lambda: U_\delta(0) \subset B_2(\mathbb R) \to \mathcal{M}_2(\mathbb{U})$ is holomorphic 
for some small $\delta$-neighborhood $U_\delta(0)$ of the origin under the premise that $\Lambda(u)$ is the complex dilatation of the modified Beurling--Ahlfors extension due to Semmes \cite{Se}, while we show that the variant of the Beurling--Ahlfors extension by the heat kernel has the better property in the sense that $\Lambda$ can be defined in some neighborhood of
the entire $B^{\mathbb R}_p(\mathbb R)$. We also generalize their result to the case of the
$p$-integrable class $\mathcal M_p(\mathbb U)$ and the $p$-Besov space $B_p(\mathbb R)$. More importantly,
our strategy is that we first prove the holomorphy of $\Lambda$ in a certain larger domain
by the local boundedness of $\Lambda$ and the holomorphy of $\Lambda$ in a weak sense,
and then we see that $\Lambda$ is continuous from this stronger property. By the continuity of $\Lambda$,
we obtain a smaller domain whose image under $\Lambda$ is contained in the appropriate space.

\begin{proof}[Proof of Theorem \ref{complex}]
We first show that for each $u_0 \in B_p^{\mathbb R}(\mathbb R)$, $\Lambda$ is a G\^ateaux holomorphic
function from the $\delta$-neighborhood $U_\delta(u_0)$ of $u_0$ to $\mathcal{L}_p(\mathbb{U})$, where $\delta>0$ is
the constant chosen in Lemma \ref{bounded} depending on $u_0$. Namely, we prove that
for every $\tilde u \in U_\delta(u_0)$ and every non-trivial
$\tilde v \in B_p(\mathbb R)$, the function $\lambda(t) = \Lambda(\tilde u+t\tilde v)$ 
of $t \in \mathbb C$ is holomorphic in some neighborhood of $0 \in \mathbb{C}$ with the image in $\mathcal{L}_p(\mathbb{U})$. 
We choose $\epsilon>0$ with $2\epsilon < (\delta - \Vert \tilde u \Vert_{B_p})/\Vert \tilde v \Vert_{B_p}$
so that $\tilde u+t\tilde v \in U_\delta(u_0)$ when $|t| \leq 2\epsilon$. Then, by (\ref{dilatation}),
the complex-valued function $\lambda(t)(z)$ for each fixed $z \in \mathbb{U}$
is holomorphic on $|t| \leq 2\epsilon$. By using the Cauchy integral formula, we have
\begin{align*}
&\quad\ \lambda(t)(z) - \lambda(t_0)(z) 
- (t - t_0) \left.\frac{d}{dt}\right|_{t = t_0}\lambda(t)(z)\\
& = \frac{1}{2\pi i}\oint_{|\tau| = 2\epsilon} \lambda(\tau)(z) \left(\frac{1}{\tau - t} - \frac{1}{\tau - t_0} - \frac{t - t_0}{(\tau - t_0)^2}  \right)d\tau\\
&= \frac{(t - t_0)^2}{2\pi i}\oint_{|\tau| = 2\epsilon} \frac{\lambda(\tau)(z)}{(\tau - t_0)^2(\tau - t)} d\tau.
\end{align*}

Since $\tilde u+t \tilde v \in U_{\delta}(u_0)$, we see from Lemma \ref{bounded} that 
$\Vert \lambda(t)(z)  \Vert_{\infty} \leq M$ for all $|t| = 2\epsilon$, and then we have
\begin{align*}
&\quad\left\Vert \frac{ \lambda(t)(z) - \lambda(t_0)(z)}{t - t_0} - 
\left.\frac{d}{dt}\right|_{t = t_0}\lambda(t)(z) \right  \Vert_{\infty}\\
& \leq \frac{|t - t_0|}{2\pi\epsilon^3}\oint_{|\tau| = 2\epsilon} \Vert \lambda(\tau)(z) \Vert_{\infty} |d\tau|
\leq \frac{2M}{\epsilon^2} |t - t_0|. 
\end{align*}
Moreover, by Lemma \ref{bounded} again, we have
\begin{align*}
&\quad \iint_{\mathbb{U}} \frac{1}{y^2} \left|  \frac{ \lambda(t)(z) 
- \lambda(t_0)(z)}{t - t_0} - \left.\frac{d}{dt}\right|_{t = t_0}\lambda(t)(z) \right|^p dxdy\\
& \leq \frac{|t - t_0|^p}{(2\pi)^p \epsilon^3} \iint_{\mathbb{U}} \frac{1}{y^2} \left( \oint_{|\tau| = 2\epsilon} |\lambda(\tau)(z)| |d\tau| \right)^p dxdy\\
&\leq \frac{2^{p-1}|t - t_0|^p}{2\pi\epsilon^{4-p}} \oint_{|\tau| = 2\epsilon}  
\left( \iint_{\mathbb{U}} \frac{|\lambda(\tau)(z)|^p}{y^2} dxdy \right) |d\tau|
\leq \frac{(2M)^p}{\epsilon^{3-p}}|t - t_0|^p.
\end{align*}
Consequently, the limit
$$
\lim_{t\to t_0} \frac{ \lambda(t) -  \lambda(t_0)}{t - t_0} 
=  \left.\frac{d}{dt}\right|_{t = t_0}\lambda(t)
$$
exists in $\mathcal{L}_p(\mathbb{U})$ and thus $\Lambda$ is G\^ateaux holomorphic in $U_\delta(u_0)$. 

It is known that a locally bounded G\^ateaux holomorphic function is holomorphic in the sense that
it is Fr\'echet differentiable (see \cite[Theorem 14.9]{Ch}, \cite[Proposition 3.7]{Di} and \cite[Theorem 36.5]{Mu}).
Thus, we see that $\Lambda$ is holomorphic on some neighborhood $U(B_p^{\mathbb R}(\mathbb R))$ of 
$B_p^{\mathbb R}(\mathbb R)$. In particular, $\Lambda$ is continuous there.
By Theorem \ref{FKP} and Proposition \ref{bounded}, we have $\Lambda(B_p^{\mathbb R}(\mathbb R)) \subset \mathcal{M}_p(\mathbb U)$.
Since $\mathcal{M}_p(\mathbb U)$ is open in $\mathcal{L}_p(\mathbb U)$, the image of $U(B_p^{\mathbb R}(\mathbb R))$
under $\Lambda$ is contained in $\mathcal{M}_p(\mathbb U)$ 
by making the neighborhood $U(B_p^{\mathbb R}(\mathbb R))$ smaller if necessary.
Since we have $|\mu_{\tilde u}(x,y)| \leq C |\alpha_y\ast e^{\tilde u-\tilde u_{I(x,y)}}(x)|$ by (\ref{onlyalpha}),
Proposition \ref{M_0} implies that $\mu_{\tilde u} \in M_0(\mathbb U)$.
That is all what we have to prove.
\end{proof}

Finally, we verify that the mapping $F_u$ defined on $\mathbb R$ by \eqref{gamma} and
on $\mathbb U$ and $\mathbb L$ by formulae (\ref{F})  
and (\ref{lowerhalfplane}) respectively
for $u \in U(B_p^{\mathbb R}(\mathbb R))$ is a quasiconformal homeomorphism of $\mathbb C$.
Here, the neighborhood $U(B_p^{\mathbb R}(\mathbb R))$ should be taken smaller if necessary
so that the same result on $\mathbb L$ as Theorem \ref{complex} on $\mathbb U$ is also satisfied.
The spaces $M(\mathbb L)$, $M_0(\mathbb L)$, and $\mathcal M_p(\mathbb L)$ of Beltrami coefficients on $\mathbb L$ are defined in the same way.

\begin{theorem}\label{qcU}
For any $u \in U(B_p^{\mathbb R}(\mathbb R))$, the mapping $F_u$ defined on $\mathbb C$ is a
quasiconformal homeomorphism onto $\mathbb C$ such that its complex dilatations on $\mathbb U$ and on $\mathbb L$
are in ${\mathcal M}_p(\mathbb U)\cap M_0(\mathbb U)$ and in ${\mathcal M}_p(\mathbb L)\cap M_0(\mathbb L)$,
respectively, and 
they both depend holomorphically on $u$. 
\end{theorem}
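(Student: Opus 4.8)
The plan is to separate the two assertions. The holomorphic dependence of the complex dilatations on $u$, and their membership in $\mathcal M_p(\mathbb U)\cap M_0(\mathbb U)$ and $\mathcal M_p(\mathbb L)\cap M_0(\mathbb L)$, are precisely the content of Theorem \ref{complex}, applied once on $\mathbb U$ and once on $\mathbb L$. Since $\mathcal M_p=M\cap\mathcal L_p$ lies inside the open unit ball $M$, this in particular records $\Vert\mu_u\Vert_\infty<1$ on each half-plane. Hence the only remaining claim is that the explicit map $F_u$ is a global quasiconformal homeomorphism of $\mathbb C$, and for this I would argue by approximation, reducing to the situation treated in the first part of the proof of Theorem \ref{smallcase}.

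First I would approximate $u$ by functions $u_j\in C^\infty(\mathbb R)$ with compact support such that $u_j\to u$ in the $B_p$-norm; density of the compactly supported smooth functions in $B_p$ together with the openness of $U(B_p^{\mathbb R}(\mathbb R))$ forces $u_j\in U(B_p^{\mathbb R}(\mathbb R))$ for all large $j$. For such $u_j$ the map $F_{u_j}$ is $C^1$ across $\mathbb R$ by the heat-kernel convolution, and it is a local diffeomorphism on $\mathbb U\cup\mathbb L$ because its Jacobian equals $|F_z|^2(1-|\mu_{u_j}|^2)>0$: here $F_z\neq0$ comes from the lower bound on $|\beta_y\ast e^{u_j-(u_j)_{I(x,y)}}(x)|$ established inside the proof of Proposition \ref{bounded}, while $\Vert\mu_{u_j}\Vert_\infty<1$ comes from Theorem \ref{complex}. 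Since $u_j$ has compact support, $\gamma_{u_j}(x)=x+O(1)$ and hence $F_{u_j}(z)=z+O(1)$ at $\infty$, so $F_{u_j}$ is proper. The topological argument in the proof of Theorem \ref{smallcase} then shows that $F_{u_j}$ is an orientation-preserving quasiconformal homeomorphism of $\mathbb C$. The key observation is that smallness of $\Vert u\Vert_*$ entered that proof only to secure $F_z\neq0$ (via Lemma \ref{semmes}) and $\Vert\mu\Vert_\infty<1$ (via Proposition \ref{mu<BMO}); membership $u_j\in U(B_p^{\mathbb R}(\mathbb R))$ now supplies both, so no norm restriction is needed.

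Next I would pass to the limit. As $u_j\to u$ in $B_p$, hence in $\mathrm{BMO}$ and locally in $L^p$, and as Lemma \ref{nearreal} provides a uniform bound on $\frac{1}{|I|}\int_I e^{|u_j(t)-(u_j)_I|}\,dt$ for all these $u_j$, a uniform-integrability argument yields $e^{u_j}\to e^u$ in $L^1_{\mathrm{loc}}$. With the normalization $\gamma_{u_j}(0)=\gamma_u(0)$ this gives $\gamma_{u_j}\to\gamma_u$ and then $F_{u_j}\to F_u$ locally uniformly on $\mathbb C$. By the continuity of $\Lambda$ into $\mathcal L_p(\mathbb U)$ from Theorem \ref{complex} we have $\Vert\mu_{u_j}\Vert_\infty\to\Vert\mu_u\Vert_\infty<1$, so the dilatations are bounded by a common $k<1$; the compactness theorem for $K$-quasiconformal maps then identifies the locally uniform limit $F_u$ as a $K$-quasiconformal homeomorphism of $\mathbb C$, carrying the prescribed dilatations on $\mathbb U$ and $\mathbb L$.

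The principal obstacle is that $U(B_p^{\mathbb R}(\mathbb R))$ is a neighbourhood of the whole real subspace $B_p^{\mathbb R}(\mathbb R)$, along which $\Vert\cdot\Vert_*$ is unbounded, so Theorem \ref{smallcase} cannot be applied to a given $u$ directly. The delicate points are therefore to keep the compactly supported approximants inside $U(B_p^{\mathbb R}(\mathbb R))$ while converging in $B_p$, and to secure, uniformly in $j$, both the bound $\Vert\mu_{u_j}\Vert_\infty\le k<1$ and the convergence $e^{u_j}\to e^u$ in $L^1_{\mathrm{loc}}$ needed for quasiconformal compactness. It is exactly at these points that Theorem \ref{complex} (open-ball bound and continuity) and Lemma \ref{nearreal} (uniform exponential integrability) carry the weight of the argument.
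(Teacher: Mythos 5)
Your proposal follows essentially the same route as the paper's proof: approximate $u$ by continuous, compactly supported $u_j\in U(B_p^{\mathbb R}(\mathbb R))$ converging in $B_p(\mathbb R)$, run the first part of the proof of Theorem \ref{smallcase} on each $F_{u_j}$ with the small-norm inputs replaced by Proposition \ref{bounded} and Theorem \ref{complex}, deduce $e^{u_j}\to e^u$ in $L^1_{\mathrm{loc}}$ hence $F_{u_j}\to F_u$ locally uniformly, and identify the limit as quasiconformal, with the dilatation statements already supplied by Theorem \ref{complex} on both half-planes. The only (harmless) deviation is the concluding step, where the paper compares $F_u$ with the normalized solution $\widetilde F$ of the Beltrami equation for $\mu_u$ via $\mu_{u_j}\to\mu_u$ in $L^\infty$, while you invoke compactness of uniformly $K$-quasiconformal families applied to the convergent sequence itself; both are standard and correct.
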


\begin{proof}
We can choose a sequence $\{u_j\}$ in $U(B_p^{\mathbb R}(\mathbb R))$ such that 
each $u_j$ is continuous and compactly supported and $u_j$ converges to $u$ in $B_p(\mathbb R)$.
Indeed, for each $j \in \mathbb N$, we set $u_j=\eta_{1/j} \ast (u1_{[-j,j]})$, where $\eta_{1/j}$ is
the mollifier defined in (\ref{mollifier}). We see that $u1_{[-j,j]}$ converges to $u$ in $B_p(\mathbb R)$
as $j \to \infty$. Moreover, it is known (see \cite[Proposition 14.5]{Leo}) that for $\tilde u \in B_p(\mathbb R)$ in general, 
$\eta_{1/j} \ast \tilde u$
converges to $\tilde u$ in $B_p(\mathbb R)$. This shows that $u_j \to u$ in $B_p(\mathbb R)$.
See also \cite[Theorem 5.3]{BGV} for the claim that smooth and compactly supported functions are dense in 
the $p$-Besov space.

By the same argument as in the proof of Theorem \ref{smallcase}, we see that $F_{u_j}$ is a
quasiconformal homeomorphism of $\mathbb C$. Moreover, by Theorem \ref{complex}
also applied on $\mathbb L$, the complex dilatation $\mu_{u_j}$ of $F_{u_j}$
converges to $\mu_u$ of $F_u$ in the $L^\infty$ norm.
Let $\widetilde F$ be the quasiconformal homeomorphism of $\mathbb C$ whose complex dilatation is $\mu_u$.
If we normalize $F_{u_j}$, $F_u$ and $\widetilde F$ suitably, then $F_{u_j}$ converges locally uniformly to $\widetilde F$.
Since $u_j$ converges to $u$ in $B_p(\mathbb R)$, we see that $e^{u_j}$ converges to $e^u$ locally in $L^1$.
Then, by the definition of $F_{u_j}$ and $F_u$ in (\ref{F}) and (\ref{lowerhalfplane}), $F_{u_j}$ converges to $F_u$.
Therefore, $F_u$ coincides with $\widetilde F$, which proves that $F_u$ is a quasiconformal homeomorphism of $\mathbb C$.

The rest of the statements has been shown in Theorem \ref{complex} if we extend it also to $\mathbb L$.
\end{proof}

\begin{remark}
There are remaining problems of showing that for every $\tilde u \in U(B_p^{\mathbb R}(\mathbb R))$ in Theorems \ref{complex}
and \ref{qcU},
the quasiconformal homeomorphism $F_{\tilde u}$ of $\mathbb U$ (and $\mathbb L$) onto its image is bi-Lipschitz with respect to the hyperbolic 
metrics as well as $\frac{|\mu_{\tilde u}(x,y)|^2}{y}dxdy$ is a vanishing Carleson measure.
On the contrary, we can obtain a little stronger consequence about the quasiconformality; $F_{\tilde u}$ is asymptotically conformal on
$\mathbb U$ (and $\mathbb L$). This means that its complex dilatation $\mu_{\tilde u}$ satisfies
$$
\inf\,\{\Vert \mu_{\tilde u}|_{\mathbb U \backslash K} \Vert_{\infty}: K\subset \mathbb U \;{\rm compact}\} = 0. 
$$
This can be proved also on the unit disk $\mathbb D$
if we consider the same problem there. 
\end{remark}

By Theorem \ref{qcU},
we can extend $\Lambda$ in Theorem \ref{complex} to a holomorphic map
$$
\widetilde \Lambda:U(B^{\mathbb R}_p(\mathbb R)) \to \mathcal M_p(\mathbb U) \times \mathcal M_p(\mathbb L)
$$
such that the Beltrami coefficients in the image correspond to the quasiconformal homeo\-morphism $F_u$ extending $\gamma_u$.
Then, this satisfies the properties of Theorem \ref{thm5} and 
Corollaries \ref{specialcase} and \ref{cor6} in Section 1.

\section{Appendix: The $p$-Weil--Petersson class on the unit circle}

Let $W_p(\mathbb S)$ denote the set of all quasisymmetric homeomorphisms $g$
of the unit circle $\mathbb S$ onto itself that has a quasiconformal extension $G$ to the unit disk $\mathbb D$ whose complex dilatation $\nu$ is $p$-integrable in the hyperbolic metric, namely, 
$$
\iint_{\mathbb D}\frac{|\nu(w)|^p}{(1-|w|^2)^2} du dv < \infty. 
$$
We call $W_p(\mathbb S)$ the {\it $p$-Weil--Petersson class} on $\mathbb S$.
The class $W_2(\mathbb S)$ for $p=2$ was first introduced and studied by Cui \cite{Cu} 
and then investigated by Takhtajan and Teo \cite{TT}. 
For $p \geq 2$, $W_p(\mathbb S)$ appeared in Guo \cite{Gu} 
(see aslo \cite{Ya}). 

Shen \cite{Sh18} characterized intrinsically the elements in the Weil--Petersson class
$W_2(\mathbb S)$ without using quasiconformal extensions, which solved the problem proposed in \cite{TT}. 
Later on, Tang and Shen \cite{TS} generalized this result to any $p \geq 2$.  
Let $B_p(\mathbb S)$ 
be the $p$-Besov space of all locally integrable functions $v$ on $\mathbb{S}$ with $\Vert v \Vert_{B_p}<\infty$,
where 
$$
\Vert v \Vert_{B_p}^{p} = \int_{\mathbb S} \! \int_{\mathbb S} \frac{|v(z)-v(w)|^p}{|z-w|^2}\frac{|dz|}{2\pi}\frac{|dw|}{2\pi}=
\int_0^1 \!\int_0^1\frac{|v(e^{2\pi ix}) - v(e^{2\pi iy})|^p}{|e^{2\pi ix} - e^{2\pi iy}|^2} dx dy.
$$
Then, the results of \cite{Sh18} and \cite{TS} can be stated as follows.

\begin{theorem}\label{circlecase}
Let $g$ be a sense-preserving homeomorphism of $\mathbb S$ onto itself. 
Then, $g$ is absolutely continuous and $\log g'$ belongs to the $p$-Besov space $B_p(\mathbb S)$ $(p \geq 2)$ if and only if $g$ 
belongs to the $p$-Weil--Petersson class $W_p(\mathbb S)$.
\end{theorem}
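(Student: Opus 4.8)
The plan is to transport the theory developed on $\mathbb R$ and $\mathbb U$ in Sections 3--4 to the circle $\mathbb S$ and the disk $\mathbb D$ through a Cayley transform, and then to invoke the real-line results. Fix a M\"obius transformation $\tau$ carrying $\mathbb U$ conformally onto $\mathbb D$, whose boundary extension maps $\mathbb R \cup \{\infty\}$ onto $\mathbb S$, and set $h = \tau^{-1} \circ g \circ \tau$, an increasing homeomorphism of $\mathbb R$. Since the hyperbolic area element $y^{-2}\,dx\,dy$ is conformally invariant and the modulus of a Beltrami coefficient is unchanged under pre- and post-composition with conformal maps, any quasiconformal extension $G$ of $g$ to $\mathbb D$ and its conjugate $\tau^{-1}\circ G \circ \tau$ extending $h$ to $\mathbb U$ have complex dilatations of equal hyperbolic $L^p$-norm, the single boundary point corresponding to $\infty$ being negligible. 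Hence $g \in W_p(\mathbb S)$ if and only if $h \in W_p(\mathbb R)$.

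For the implication from the Besov condition to membership in $W_p(\mathbb S)$, which is the substantive half, I would translate the heat-kernel Beurling--Ahlfors extension to the disk. Given a real-valued $v = \log g' \in B_p(\mathbb S)$, transporting $v$ to the line and applying Theorem~\ref{qcU}, or more directly Corollary~\ref{specialcase}, produces a quasiconformal $F$ of $\mathbb C$ whose dilatations on $\mathbb U$ and $\mathbb L$ lie in $\mathcal M_p$; conjugating by $\tau$ yields a quasiconformal extension of $g$ to $\mathbb D$ with $p$-integrable dilatation, so $g \in W_p(\mathbb S)$. Alternatively one may run the construction of Section 3 intrinsically on $\mathbb D$, replacing the heat averaging on $\mathbb R$ by its conformal image on $\mathbb S$, as indicated in the Remark preceding this appendix; the two routes produce the same extension.

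The bridge that makes either route work is the transfer of the Besov condition, and this is the step I expect to demand the most care. The seminorm kernel $|dz|\,|dw|/|z-w|^2$ is M\"obius invariant, because $|\tau(s)-\tau(t)| = |\tau'(s)|^{1/2}|\tau'(t)|^{1/2}|s-t|$ together with $|d\tau(s)| = |\tau'(s)|\,ds$ cancels the Jacobian factors; consequently $\Vert v \circ \tau \Vert_{B_p(\mathbb R)} = \Vert v \Vert_{B_p(\mathbb S)}$, so the change of variables is an isometry of the homogeneous Besov seminorms. The genuine obstacle is that the pre-Schwarzian $\log(\cdot)'$ is not M\"obius invariant: the chain rule gives
\begin{equation*}
\log h' = (\log g')\circ \tau + \log\!\big((\tau^{-1})'\circ g \circ \tau\big) + \log \tau'.
\end{equation*}
The middle and last terms cancel exactly when $g$ is the identity, so the error to control is $\log(\tau^{-1})'(g(\tau(x))) - \log(\tau^{-1})'(\tau(x))$; one must show this difference belongs to $B_p^{\mathbb R}(\mathbb R)$. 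This amounts to proving that the class of homeomorphisms with $\log(\cdot)'$ in the Besov space is invariant under pre- and post-composition with M\"obius maps, and it is here that the quasisymmetry (doubling) of $g$ must be exploited and the distinguished point of $\tau$ on $\mathbb S$, the preimage of $\infty$, must be controlled. Once these terms are absorbed we obtain the equivalence $\log g' \in B_p(\mathbb S) \Leftrightarrow \log h' \in B_p^{\mathbb R}(\mathbb R)$, together with the equivalence of the absolute continuity of $g$ and of $h$.

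Assembling the pieces gives the theorem: $g \in W_p(\mathbb S)$ is equivalent to $h \in W_p(\mathbb R)$ by the first paragraph; this is equivalent, through the real-analytic bijection $B_p^{\mathbb R}(\mathbb R) \leftrightarrow W_p(\mathbb R)$ of Corollary~\ref{specialcase} valid in the stated range $p \ge 2$, to $h$ being absolutely continuous with $\log h' \in B_p^{\mathbb R}(\mathbb R)$; and this is equivalent, by the Besov transfer of the preceding paragraph, to $g$ being absolutely continuous with $\log g' \in B_p(\mathbb S)$. The principal difficulty throughout is reconciling the M\"obius invariance of the Besov kernel with the non-invariance of $\log g'$, that is, the third paragraph.
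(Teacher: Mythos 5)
The crux of your argument is exactly the step you leave open: you assert that the M\"obius correction term $\log\tau'(x)-\log\tau'(h(x))$ (your middle-plus-last terms, rewritten via $(\tau^{-1})'(w)=1/\tau'(\tau^{-1}(w))$, so that $(\tau^{-1})'\circ g\circ\tau=1/(\tau'\circ h)$) ``must be shown'' to lie in $B_p^{\mathbb R}(\mathbb R)$, and then proceed as if this were done. This is a genuine gap, not a routine verification, because neither term lies in $B_p(\mathbb R)$ separately: for the Cayley map, $\log\tau'(x)=\mathrm{const}-2\log(x+i)$, and ${\rm Re}\,\log(x+i)=\tfrac12\log(1+x^2)$ is \emph{not} in $B_p(\mathbb R)$. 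Indeed, substituting $t=e^a$, $s=e^b$ turns $dt\,ds/|t-s|^2$ into $da\,db/\bigl(4\sinh^2\tfrac{a-b}{2}\bigr)$, while $|\log(1+t^2)-\log(1+s^2)|\geq 2|a-b|-\tfrac{1}{50}$ for $t,s\geq 10$; hence the region $\{a,b\geq\log 10,\ 1\leq |a-b|\leq 2\}$ contributes a fixed positive amount per unit interval of $b$, and the seminorm diverges. (A similar argument, one fixed contribution per period, shows that a nonconstant periodic function is never in $B_p(\mathbb R)$ either.) Consequently your whole reduction hinges on cancellation inside $\log\bigl((x+i)/(h(x)+i)\bigr)$, i.e.\ on quantitative control of $h$ near the distinguished point, which must be extracted from the hypotheses on $g$; this is precisely the substantive technical content of the $p=2$ transfer argument of Wu--Hu--Shen \cite{WHS}, which your outline re-poses rather than proves. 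You also need, and only mention, the prior normalization making $g$ fix $\tau(\infty)$: without it $h$ is not a homeomorphism of $\mathbb R$ onto itself, and $\log h'$ has a non-locally-integrable singularity at the point sent to $\infty$.

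For comparison, the paper proves only the direction ``$\log g'\in B_p(\mathbb S)\Rightarrow g\in W_p(\mathbb S)$'' (for all $p>1$), quoting \cite{Sh18, TS} for the full equivalence, and it does so by a transfer that bypasses your obstacle entirely: $g$ is lifted through the exponential covering $z\mapsto e^{2\pi iz}$ rather than a M\"obius map, so $u(x)=\log f'(x)=\log|g'(e^{2\pi ix})|$ is periodic and no pre-Schwarzian correction terms arise. The price is that this periodic $u$ is not in $B_p(\mathbb R)$ (by the parenthetical remark above), so the paper cannot invoke Theorem \ref{qcU} or Corollary \ref{specialcase} for $f$ the way you invoke them for $h$; instead it reuses only the pointwise dilatation bound (\ref{p}), integrates it over a single period strip using periodicity (estimate (\ref{p+})) to get the bound by $\Vert \log|g'|\Vert_{B_p(\mathbb S)}^p$ near $\mathbb S$, and handles the inner disk by the sup-norm estimate coming from Proposition \ref{mu<BMO}. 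Finally, your incidental claim that conjugating the line extension by $\tau$ and running the construction intrinsically on $\mathbb D$ ``produce the same extension'' is false---the heat-kernel extension is not M\"obius-equivariant---though nothing essential in your plan depends on it.
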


Recently, Wu, Hu and Shen \cite{WHS} gave an alternative proof for the case of $p=2$ by exporting 
the result on $\mathbb R$ obtained by
the modified Beurling--Ahlfors extension due to Semmes \cite{Se}, 
which is much different from the method given previously in \cite{Sh18}. 

The purpose of this appendix is to show that the variant of the Beurling--Ahlfors extension by the heat kernel,  
which is translated to the setting of the unit disk, also yields the desired quasiconformal extension.  
This in particular gives an alternative proof of the only-if part of Theorem \ref{circlecase} for a general $p>1$.
As the extension used in \cite{WHS} is valid only for such $g$ with small norm,
the decomposition of $g$ and the composition of such extensions are required,
but our method gives a straight extension and certain properties of the complex dilatation are thus inherited.

For a sense-preserving homeomorphism $g:\mathbb S \to \mathbb S$ with $\log g' \in B_p(\mathbb S)$,
our quasiconformal extension $G:\mathbb D \to \mathbb D$ is precisely defined as follows.
We first note that 
$\log g' \in B_p(\mathbb S)$ implies that $v = \log |g'|$ belongs to the subspace $B^{\mathbb R}_p(\mathbb S)$
of real-valued functions. In fact, 
$\Vert v \Vert_{B_p} \leq \Vert \log g' \Vert_{B_p}$.
In addition, by the same proof of Proposition \ref{BMOnorm} applied to the unit circle case,
we have $v \in {\rm VMO}(\mathbb S)$ and $\Vert v \Vert_* \leq \Vert v \Vert_{B_p}$.

We take a continuous lift $f: \mathbb{R} \to \mathbb{R}$ of $g$ satisfying 
that $g(e^{2\pi ix}) = e^{2\pi i f(x)}$ for $x \in \mathbb{R}$. 
Let $u(x) = \log f'(x)=\log |g'(e^{2\pi ix})|$. 
This satisfies $u(x+1) = u(x)$ for any $x \in \mathbb{R}$. From this periodicity, 
we also see that $u \in {\rm VMO}(\mathbb R)$, but
$u$ does not necessarily belong to $B_p(\mathbb R)$.
Moreover, we can verify that$\Vert v \Vert_* \leq \Vert u \Vert_* \leq 3\Vert v \Vert_*$ (see \cite[Lemma 2.2]{Pa}).

Let $F: \mathbb{U} \to \mathbb{U}$ be the variant of the Beurling--Ahlfors extension by the  
heat kernel such that $F|_{\mathbb{R}} = f$, 
and let $\mu(z) = F_{\bar z}/F_z$ be the complex dilatation of $F$. Since $f(x+1) = f(x) +1$, 
we see from definition (\ref{F}) that the quasiconformal extension $F$ of $f$ satisfies $F(z+1) = F(z) + 1$. Thus, $F$ can be projected to a quasiconformal homeomorphism $G$ of the punctured disk $\mathbb D\backslash \{0\}$ onto itself such that $G(e^{2\pi iz}) = e^{2\pi iF(z)}$ for $z \in \mathbb U$ and $G|_{\mathbb S} = g$. 
Clearly, $G$ can be extended quasiconformally
to $0$, and the resulting mapping from $\mathbb D$ onto itself is still denoted by $G$. 

Concerning this quasiconformal extension $G$ of $g$, we prove the following properties.
The properties 
that the complex dilatation $\nu$ on $\mathbb D$ vanishes at the boundary and induces a vanishing Carleson measure are defined similarly to the case of $\mathbb U$. 

\begin{theorem}\label{Delta}
Let $g$ be a sense-preserving absolutely continuous homeomorphism of the unit circle $\mathbb S$ onto itself with  
$v=\log |g'|\in B_p^{\mathbb R}(\mathbb S)$.  
Then, the complex dilatation $\nu$ of the quasiconformal homeomorphism $G$ of $\mathbb D$ onto itself
defined above satisfies
$$
\iint_{\mathbb D}\frac{|\nu(w)|^p}{(1 - |w|^2)^2} dudv \leq C_p(v) \Vert v \Vert_{B_p}^p
$$
for a locally bounded constant $C_p(v)>0$ depending on $v \in B_p^{\mathbb R}(\mathbb S)$.
In particular, $g \in W_p(\mathbb S)$. Moreover, $\nu$ vanishes at the boundary,
and 
$$
\frac{1}{1-|w|^2}|\nu(w)|^2dudv
$$
is a vanishing Carleson measure on $\mathbb D$.
\end{theorem}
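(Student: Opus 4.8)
The plan is to transfer the corresponding results on $\mathbb U$, namely Lemma \ref{mainlemma} and Proposition \ref{M_0} together with Theorem \ref{FKP}, to the unit disk $\mathbb D$ through the exponential covering $\pi(z)=e^{2\pi iz}$, exploiting the periodicity of $u$. Since $g$ is sense-preserving, $u=\log f'$ is real-valued and $1$-periodic, and by the periodic form of Proposition \ref{BpA} applied to $v$ together with the comparison $\Vert v \Vert_* \leq \Vert u \Vert_* \leq 3\Vert v \Vert_*$, the weight $e^u$ is an $A_\infty$-weight on $\mathbb R$. Hence $F=F_u$ is a quasiconformal diffeomorphism of $\mathbb U$ by Theorem \ref{FKP}, and it descends to the quasiconformal $G$ on $\mathbb D$. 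Because $\pi$ is holomorphic, the identity $G\circ \pi=\pi \circ F$ forces $|\nu(e^{2\pi iz})|=|\mu(z)|$ for every $z \in \mathbb U$, while $F(z+1)=F(z)+1$ makes $\mu$ one-periodic in $x$.

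First I would reduce the disk integral to an integral over the fundamental strip $S=[0,1)\times(0,\infty)$. Writing $w=e^{2\pi iz}$ with $z=x+iy$, one has $|w|^2=e^{-4\pi y}$ and $du\,dv=4\pi^2 e^{-4\pi y}\,dx\,dy$, so that
\begin{equation*}
\iint_{\mathbb D}\frac{|\nu(w)|^p}{(1-|w|^2)^2}\,du\,dv
=4\pi^2\iint_{S}\frac{|\mu(z)|^p\,e^{-4\pi y}}{(1-e^{-4\pi y})^2}\,dx\,dy.
\end{equation*}
Since $y^2 e^{-4\pi y}/(1-e^{-4\pi y})^2$ is bounded on $(0,\infty)$ (it is $\asymp 1$ as $y\to 0$ and decays exponentially as $y\to\infty$), the disk weight is dominated by $C/y^2$, and it suffices to bound $\iint_S |\mu(z)|^p y^{-2}\,dx\,dy$.

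Next I would rerun the proof of Lemma \ref{mainlemma}, but over the strip $S$ in place of $\mathbb U$. The hypothesis $|\mu(x,y)|\lesssim |\alpha_y\ast e^{u-u_{I(x,y)}}(x)|$ holds here because $u$ is real with $e^u\in A_\infty$: as in (\ref{s2}) one has $|\beta_y\ast e^{u-u_{I(x,y)}}|\gtrsim 1$, whence (\ref{onlyalpha}). The exponential factor in the Cauchy--Schwarz splitting is controlled by a periodic analogue of Lemma \ref{nearreal}, which follows from the John--Nirenberg inequality once $u$ is written as a bounded function plus a BMO function of arbitrarily small norm (the periodic form of Proposition \ref{BpA}). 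The decisive modification occurs in the final computation (\ref{compute}): integrating $L_n$ only over $x\in[0,1)$ and using the periodicity of $u$, the full-line Besov integral is replaced by
\begin{equation*}
\int_0^1\!\int_{-\infty}^{\infty}\frac{|u(x+t)-u(x)|^p}{t^2}\,dt\,dx
=\int_0^1\!\int_0^1 |u(x)-u(s)|^p\sum_{m\in\mathbb Z}\frac{1}{(s-x+m)^2}\,ds\,dx
=4\pi^2\,\Vert v \Vert_{B_p}^p,
\end{equation*}
where I have used $\sum_{m\in\mathbb Z}(s-x+m)^{-2}=\pi^2/\sin^2(\pi(s-x))$ and $|e^{2\pi ix}-e^{2\pi is}|^2=4\sin^2(\pi(x-s))$. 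Summing the resulting rapidly convergent series as in Lemma \ref{mainlemma} yields $\iint_S |\mu|^p y^{-2}\,dx\,dy\leq C_p(v)\Vert v\Vert_{B_p}^p$, hence the stated estimate and in particular $g\in W_p(\mathbb S)$.

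For the two remaining assertions I would likewise transfer the $\mathbb U$-results. Since $u\in{\rm VMO}(\mathbb R)$, Proposition \ref{M_0} (in its periodic form) gives $\mu\in L_0(\mathbb U)$; because the boundary $y\to 0$ corresponds to $|w|\to 1$ under $\pi$, the identity $|\nu(w)|=|\mu(z)|$ shows that $\nu$ vanishes at $\partial\mathbb D$. For the Carleson measure, Theorem \ref{FKP} gives that $y^{-1}|\mu|^2\,dx\,dy$ is a vanishing Carleson measure on $\mathbb U$; near the boundary the pullback weight $4\pi^2 e^{-4\pi y}/(1-e^{-4\pi y})$ is $\asymp 1/y$, and $\pi$ carries Carleson boxes over arcs of $\mathbb S$ to Carleson boxes over intervals of $\mathbb R$, so the vanishing Carleson property descends to $\mathbb D$. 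The main obstacle is the third step: one must justify carefully that the machinery of Lemma \ref{mainlemma}, tailored to $u\in B_p(\mathbb R)$, remains valid for the merely periodic VMO function $u\notin B_p(\mathbb R)$---in particular the boundedness of the exponential factor and the legitimacy of restricting the $x$-integration to a single period---and then recognize the periodized kernel as the circle Besov weight via the cotangent summation.
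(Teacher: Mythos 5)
Your proposal is correct, and its scaffolding --- transferring everything to $\mathbb U$ through $z\mapsto e^{2\pi iz}$, invoking the pointwise bound \eqref{p} coming from Lemma \ref{mainlemma} for the periodic real-valued $u$, and settling the last two assertions by Proposition \ref{M_0} and Theorem \ref{FKP} --- is the same as the paper's; but the quantitative core is organized genuinely differently. The paper splits $\mathbb D$ into the annulus $r_0<|w|<1$ and the inner disk $|w|<r_0$: on the annulus (the strip $0<y<c$ with $c<\tfrac12$) it bounds $\int_0^1\!\int_0^c L_n\,y^{-2}\,dy\,dx$ by truncating the $t$-integration to a single period at the cost of an extra factor $2^n$ (see \eqref{p+}), and on the inner disk it needs the separate sup-norm estimate $\Vert\nu\Vert_\infty=\Vert\mu\Vert_\infty\lesssim\Vert u\Vert_*\lesssim\Vert v\Vert_{B_p}$ from Proposition \ref{mu<BMO}. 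You instead integrate over the full strip $[0,1)\times(0,\infty)$ at once and use the lattice-sum identity $\sum_{m\in\mathbb Z}(t+m)^{-2}=\pi^2/\sin^2(\pi t)$ to convert the periodized line kernel exactly into the circle Besov kernel, obtaining $\iint_{[0,1)\times(0,\infty)} L_n\,y^{-2}\,dx\,dy=2^{n-2}\cdot4\pi^2\Vert v\Vert_{B_p}^p$, after which summation against $e^{-2^n}$ finishes. This buys two things: the annulus/inner-disk decomposition and the sup-norm estimate become unnecessary, and the constant is identified exactly, whereas the paper's displayed equality in \eqref{p+} is in truth an inequality obtained from $1/t^2\le\pi^2/\sin^2(\pi t)$ on $|t|\le\tfrac12$ (and with the exponent $p$ on $\Vert v\Vert_{B_p}$ omitted there). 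The one point both arguments must justify --- and you flag it correctly --- is that \eqref{p} applies to the merely periodic $u\notin B_p(\mathbb R)$: one needs $e^u\in A_\infty(\mathbb R)$ so that \eqref{s2} yields \eqref{onlyalpha}, and a periodic analogue of Lemma \ref{nearreal}, via writing $u$ as an $L^\infty$ function plus a function of small BMO norm (the periodic form of Proposition \ref{BpA}), to bound the exponential factor in \eqref{setup}; this is exactly what you propose, and it is what the paper's locally bounded constant $C_p(v)$ tacitly relies on.
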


\begin{proof} 
The complex dilatation $\nu(w) = G_{\bar w}/ G_w$ $(w \in \mathbb D)$ satisfies 
$\nu(e^{2\pi i z})\overline{e^{2\pi iz}}/e^{2\pi iz} = -\mu(z)$ for 
the complex dilatation $\mu(z)=F_{\bar z}/F_z$
$(z \in \mathbb{U})$, and 
in particular, $\Vert \nu \Vert_{\infty} = \Vert \mu \Vert_{\infty}$. 
We fix some constant $r_0$ with $e^{-\pi} < r_0 < 1$ so that $c = \frac{1}{2\pi}\log\frac{1}{r_0} < \frac{1}{2}$. Noting that $e^{2\pi y} - 1 \geq 2\pi y$ for $y > 0$, 
we have 
\begin{equation}\label{prep}
\begin{split}
\iint_{r_0 < |w| < 1}\frac{|\nu(w)|^p}{(1 - |w|^2)^2} dudv
& = 4\pi^2\int_0^1 dx \int_0^c \frac{|\nu(e^{2\pi iz})|^p}{(1 - |e^{2\pi iz}|^2)^2} |e^{2\pi iz}|^2 dy\\ 
& \leq \int_0^1 dx \int_0^c \frac{|\mu(z)|^p}{y^2} dy.\\
\end{split}
\end{equation}
Here, we can estimate $|\mu(z)|^p$ by (\ref{p}).
Then, similarly to (\ref{compute}), we obtain by using the periodicity $u(x+1) = u(x)$ and $c<1/2$ that
\begin{equation}\label{p+}
\begin{split}
\int_0^1 dx\int_0^c \frac{L_n}{y^2}dy&=\frac{1}{2^{n+1}}\int_0^1 dx\int_0^c \frac{1}{y^3}dy\int_{-2^{n}y}^{2^{n}y} |u(x+t) - u(x)|^p dt\\
&=\frac{1}{2^{n+1}} \int_0^1 dx\int_{-2^nc}^{2^nc} |u(x+t)- u(x)|^p dt \int_{2^{-n}t}^c \frac{1}{y^3}dy\\
&\leq 2^{n-2} \int_{0}^{1} dx \int_{-2^nc}^{2^nc}\frac{|u(x+t) - u(x)|^p}{t^2} dt\\
&\leq 2^{2n-2} \int_{0}^{1} dx \int_{-1/2}^{1/2}\frac{|u(x+t) - u(x)|^p}{t^2} dt
= 2^{2n-2} \Vert v \Vert_{B_p}.\\
\end{split}
\end{equation}
Hence, by (\ref{p}), (\ref{prep}), and (\ref{p+}), we conclude that
\begin{equation*}
\begin{split}
\iint_{r_0 < |w| < 1}\frac{|\nu(w)|^p}{(1 - |w|^2)^2} dudv
&\leq C_p(v)\sum_{n=0}^\infty \frac{1}{e^{2^n}}\int_0^1 dx \int_0^c \frac{L_n+L_0}{y^2} dy\\
&\leq C_p(v)\sum_{n=0}^\infty \frac{2^{2n-2}+2^{-2}}{e^{2^n}}\Vert v \Vert_{B_p}
\end{split}
\end{equation*}
for a locally bounded constant $C_p(v)>0$ depending on $v \in B_p^{\mathbb R}(\mathbb S)$.

On the other hand, we see from Proposition \ref{mu<BMO} that
$\Vert \mu \Vert_\infty \lesssim \Vert u \Vert_*$ in any case without restricting $\Vert u \Vert_*$
to be small since $\Vert \mu \Vert_\infty<1$ holds in the present situation. 
Therefore, 
$$
\Vert \nu \Vert_{\infty}=\Vert \mu \Vert_{\infty} \lesssim \Vert u \Vert_*
\lesssim  \Vert v \Vert_* \leq \Vert v \Vert_{B_p},
$$ 
and hence,
$$
\iint_{|w| < r_0} \frac{|\nu(w)|^p}{(1 - |w|^2)^2} dudv \leq \frac{\pi}{(1 - r_0^2)^2}\Vert \nu \Vert_{\infty}^p \lesssim \Vert v \Vert_{B_p}^p. 
$$
Consequently, 
\begin{align*}
\iint_{\mathbb D}\frac{|\nu(w)|^p}{(1 - |w|^2)^2} dudv &= \iint_{r_0 < |w| < 1}\frac{|\nu(w)|^p}{(1 - |w|^2)^2} dudv +
 \iint_{ |w| < r_0}\frac{|\nu(w)|^p}{(1 - |w|^2)^2} dudv\\
& \leq C_p(v) \Vert v \Vert_{B_p}^p.
\end{align*}

To see that $\nu$ vanishes at the boundary, it suffices to see that
$\mu$ vanishes at the boundary. This was already proved in Proposition \ref{M_0}.
To see that $\frac{1}{1-|w|^2}|\nu(w)|^2dudv$ 
is a vanishing Carleson measure on $\mathbb D$, it suffices to see that
$\frac{1}{y}|\mu(z)|^2dxdy$ is a vanishing Carleson measure on $\mathbb U$.
This was already proved in Theorem \ref{FKP}.
This completes the proof of Theorem \ref{Delta}. 
\end{proof}


\begin{thebibliography}{99}
\bibitem{BA} A. Beurling and L.V. Ahlfors, The boundary correspondence under quasiconformal mappings, Acta Math. 96 (1956), 125--142.
\bibitem{BGV} L. Brasco, D. G\'omez-Castro and J.L. V\'azquez, Characterisation of homogeneous fractional Sobolev spaces,
Calc. Var. Partial Differential Equations 60, 60 (2021).
\bibitem{Bi} C.J. Bishop, Weil--Petersson curves, conformal energies, $\beta$-numbers, and minimal surfaces, preprint. 
\bibitem{Ch} S.B. Chae, Holomorphy and Calculus in Normed Spaces, Pure and Applied Math. 92, Marcel Dekker, 1985.
\bibitem{CF} R.R. Coifman and C. Fefferman, Weighted norm inequalities for maximal functions and singular integrals, Studia Math. 51 (1974), 241--250.
\bibitem{Cu} G. Cui, Integrably asymptotic affine homeomorphisms of the circle and Teichm\"uller spaces, Sci. China Ser. A 43 (2000), 267--279. 
\bibitem{Di} S. Dineen, Complex Analysis on Infinite Dimensional Spaces, Springer, 1999.
\bibitem{DE} A. Douady and C. J. Earle, Conformal natural extension of homeomorphisms of the circle, Acta Math. 157 (1986), 23--48.
\bibitem{FH} J. Fan and J. Hu, Holomorphic contractibility and other properties of the Weil--Petersson and VMOA Teich\-m\"ul\-ler spaces, Ann. Acad. Sci. Fenn. Math. 41 (2016), 587--600.
\bibitem{FKP} R.A. Fefferman, C.E. Kenig and J. Pipher, The theory of weights and the Dirichlet problems for elliptic equations, Ann. of Math. 134 (1991), 65--124.
\bibitem{Ga}J.B. Garnett, Bounded Analytic Functions, Academic Press, New York, 1981.
\bibitem{GR} F. Gay-Balmaz and T.S. Ratiu, The geometry of the universal Teichm\"uller space and the Euler-Weil-Petersson equation, Adv. Math. 279 (2015), 717--778. 
\bibitem{Gu} H. Guo, Integrable Teichm\"uller spaces, Sci. China Ser. A  43 (2000), 47--58. 
\bibitem{Hr} S.V. Hru\v{s}\v{c}ev, A description of weights satisfying the {$A_{\infty }$} condition of Muckenhoupt,
Proc. Amer. Math. Soc. 90 (1984), 253--257.
\bibitem{Leo} G. Leoni, A First Course in Sobolev Spaces, Graduate Studies in Math. 105, American Math. Soc., 2009.
\bibitem{Mat} K. Matsuzaki, Rigidity of groups of circle diffeomorphisms and Teichm\"uller spaces, 
J. Anal. Math. 40 (2020), 511--548.
\bibitem{Mat1} K. Matsuzaki, Circle diffeomorphisms, rigidity of symmetric conjugation and affine foliation of the universal Teich\-m\"ul\-ler space,
Geometry, dynamics, and foliations 2013, Advanced Studies in Pure Mathematics vol. 72, pp. 145--180, Mathematical Society of Japan, 2017.
\bibitem{M} B. Muckenhoupt, Weighted norm inequalities for the Hardy maximal function, Trans. Amer. Math. Soc. 165 (1972),
207--226.
\bibitem{Mu} J. Mujica, Complex Analysis in Banach Spaces, Dover, 2010.
\bibitem{Pa} D. Partyka, Eigenvalues of quasisymmetric automorphisms determined by VMO functions, 
Ann. Univ. Mariae Curie-Sklodowska Sect. A 52 (1998), 121--135. 
\bibitem{Se} S. Semmes, Quasiconformal mappings and chord-arc curves, Trans. Amer. Math. Soc. 306 (1988), 233--263.
\bibitem{Sh18} Y. Shen, Weil--Petersson Teichm\"uller space, Amer. J. Math. 140 (2018), 1041--1074. 
\bibitem{ST} Y. Shen and S. Tang, Weil--Petersson Teichm\"uller space II: smoothness of flow curves of $H^{\frac{3}{2}}$-vector fields, Adv. Math. 359 (2020). 
\bibitem{SW} Y. Shen and L. Wu, Weil--Petersson Teichm\"uller space III: dependence of Riemann mappings for Weil--Petersson
curves, Math. Ann. 381 (2021), 875--904. 
\bibitem{St2} E. Stein, Harmonic Analysis: Real-Variable Methods, Orthogonality, and Oscillatory Integrals, Princeton Univ. Press, 1993.
\bibitem{TS} S. Tang and Y. Shen, Integrable Teichm\"uller space, J. Math. Anal. Appl. 465 (2018), 658--672. 
\bibitem{TT}L. Takhtajan and L.P. Teo, Weil-Petersson metric on the universal Teichm\"uller space, Mem. Amer. Math. Soc. 183 (861) (2006).
\bibitem{To} J.F. Toland, A few remarks about the Hilbert transform,
J. Funct. Anal. 145 (1997), 151--174. 
\bibitem{Wa} Y. Wang, Equivalent descriptions of the Loewner energy. Invent. Math. 218 (2019), 573--621.
\bibitem{WM-2} H. Wei and K. Matsuzaki, Beurling--Ahlfors extension by heat kernel, ${\rm A}_\infty$-weights for VMO, and vanishing Carleson measures, Bull. London Math. Soc. 53 (2021), 723--739.
\bibitem{WM-4} H. Wei and K. Matsuzaki, Parametrization of the $p$-Weil--Petersson curves: 
holomorphic dependence, arXiv:2111.14011.
\bibitem{WHS} L. Wu, Y. Hu and Y. Shen, Weil--Petersson Teichm\"uller space revisited, J. Math. Anal. Appl. 491 (2020), 124304.
\bibitem{Ya} M. Yanagishita, Introduction of a complex structure on the $p$-integrable Teichm\"uller space,
Ann. Acad. Sci. Fenn. Math. 39 (2014), 947--971.
\end{thebibliography}
\end{document}